\tikzset{commutative diagrams/.cd}
\numberwithin{equation}{section}
\theoremstyle{theorem}
\newtheorem{theorem}{Theorem}[section]
\newtheorem*{theorem*}{Theorem}
\newtheorem{corollary}[theorem]{Corollary}
\newtheorem{lemma}[theorem]{Lemma}
\newtheorem{proposition}[theorem]{Proposition}
\providecommand{\customgenericname}{}
\newcommand{\newcustomtheorem}[2]{%
	\newenvironment{#1}[1]
	{%
		\renewcommand\customgenericname{#2}%
		\renewcommand\theinnercustomgeneric{##1}%
		\innercustomgeneric
	}
	{\endinnercustomgeneric}
}
\theoremstyle{definition}
\newtheorem{definition}[theorem]{Definition}
\newtheorem*{example*}{Example}
\newtheorem*{examples*}{Examples}
\newtheorem{remark}[theorem]{Remark}
\newtheorem*{remark*}{Remark}
\newtheorem*{remarks*}{Remarks}
\newtheorem*{note*}{Note}
\newtheoremstyle{named}{}{}{\itshape}{}{\bfseries}{.}{.5em}{#1\thmnote{ #3}}
\theoremstyle{named}
\DeclareSymbolFont{cyrletters}{OT2}{wncyr}{m}{n}
\DeclareMathSymbol{\cyZh}{\mathalpha}{cyrletters}{'021}
\newcommand\tl{\widetilde} 
\newcommand\hhat{\widehat} 
\DeclarePairedDelimiter{\abs}{\lvert}{\rvert}
\DeclarePairedDelimiter{\set}{\{}{\}}
\DeclarePairedDelimiter{\parens}{\lparen}{\rparen}
\newcommand\Z{{\mathbb Z}}
\newcommand\N{{\mathbb N}}
\newcommand\C{{\mathbb C}}
\newcommand\R{{\mathbb R}}
\newcommand\Fq{{\mathbb F}_q}
\newcommand\subeq{\subseteq}
\newcommand\supeq{\supseteq}
\newcommand{\qbinom}[2]{{#1\brack #2}}
\newcommand{\zetahat}{\hhat{\zeta}}
\newcommand{\nuhat}{\hhat{\nu}}
\newcommand{\fc}{\mathfrak{c}}
\newcommand{\Fqsq}{\mathbb{F}_{q^2}}
\DeclareMathOperator{\Aut}{Aut}
\DeclareMathOperator{\Spec}{Spec}
\newcommand{\Quot}{\mathrm{Quot}}
\newcommand{\Hilb}{\mathrm{Hilb}}
\newcommand{\Coh}{\mathrm{Coh}}
\newcommand{\type}{\mathrm{type}}
\newcommand{\AG}{\mathbf{AG}}
\newcommand{\Br}{\mathbf{Br}}
\newcommand{\dAG}{\prescript{\dagger\!}{}{\mathbf{AG}}}
\newcommand{\dBr}{\prescript{\dagger}{}{\mathbf{Br}}}
\newcommand{\ddBr}{\prescript{\ddagger}{}{\mathbf{Br}}}
\newcommand{\ut}{\undertilde}
\newcommand{\rk}{\operatorname{rk}}
\newcommand{\TR}{\mathrm{TR}}
\newcommand{\cTR}{\mathrm{cTR}}
\newcommand{\Fl}{\mathrm{Fl}}
\newcommand{\Gr}{\mathrm{Gr}}
\newcommand{\pr}{\mathrm{pr}}
\newcommand{\cX}{\mathcal{X}}
\newcommand{\cV}{\mathcal{V}}
\newcommand{\LHS}{\operatorname{LHS}}
\title[Inert quadratic orders]{Multiple Rogers--Ramanujan type identities for inert quadratic orders}
\author[S. Chern]{Shane Chern}
\address[S. Chern]{Fakult\"at f\"ur Mathematik, Universit\"at Wien, Oskar-Morgenstern-Platz 1, Wien 1090, Austria}
\email{chenxiaohang92@gmail.com, xiaohangc92@univie.ac.at}
\author[Y. Huang]{Yifeng Huang}
\address[Y. Huang]{Department of Mathematics, University of Southern California, 3650 S Vermont Ave, Los Angeles, CA 90089, U.S.A.}
\email{yifenghu@usc.edu}
\date{}
\keywords{Quot zeta function, Coh zeta function, inert quadratic order, Andrews--Gordon sum, Bressoud sum, multiple Rogers--Ramanujan type identity, $t$-deformation, ghost parameter, $a$-independence.}
\subjclass[2020]{14D20, 11S45, 11P84, 05A15, 33D15.}
\begin{document}
	
\sloppy

\begin{abstract}
	We compute the Quot and finitized Coh zeta functions of the inert quadratic orders $\mathbb{F}_q[[T]]+T^{m}\mathbb{F}_{q^{2}}[[T]]$ for every $m\geq 1$ in terms of a $2m$-fold multisum, and then show this multisum equals an $m$-fold Bressoud sum. This proves a recent conjecture of the second author, rounding up the line of exploration in the series of work by the authors and Jiang. The equality between the $2m$-fold multisum and the $m$-fold Bressoud sum is built upon generalizing the multisum by introducing a ``ghost'' parameter $a$ to its summands. We then show that such an $a$-generalization is surprisingly $a$-independent by purely $q$-theoretic techniques. Finally, we propose a refined multisum that interpolates two versions of Quot zeta functions for all three types of quadratic orders.
\end{abstract}

\maketitle

\setcounter{tocdepth}{1}
\tableofcontents

\section{Introduction}

A central program in modern algebra, geometry, and combinatorics involves the study of generating functions that enumerate ``objects'' on a variety $X$. These enumerations are encoded by invariants, such as point counts over finite fields or motives in the Grothendieck ring of varieties, associated with various moduli spaces.

Of particular interest are the moduli stack of zero-dimensional coherent sheaves, $\Coh_n(X)$, and the rank-$r$ Quot schemes of points, $\Quot_n(\mathcal{O}_X^{\oplus r})$, which parametrize $0$-dimensional degree-$n$ quotients of $\mathcal{O}_X^{\oplus r}$. These spaces are closely related --- the Hilbert scheme of points, $\Hilb_n(X)$, is the rank-$1$ case, while the stack $\Coh_n(X)$ can be understood as a ``framing-rank-infinity'' limit of the rank-$r$ Quot schemes \cite{huangjiang2023torsionfree}. The study of these spaces is, by definition, the study of the high-rank, degree-zero Donaldson--Thomas theory on $X$; see, for instance, \cite{ricolfi17}. 

The case of planar curve singularities is exceptionally intriguing. The rank-$1$ story, focusing on $\Hilb_n(X)$, is already remarkably rich, featuring the celebrated Oblomkov--Rasmussen--Shende conjecture which connects the geometry of Hilbert schemes to link homology \cite{ors2018homfly}. Recent works by the authors and Jiang \cite{huangjiang2023torsionfree,shane2024multiple,huang2025coh}, in an attempt to ultimately extend this picture to $\Quot_n(\mathcal{O}_X^{\oplus r})$ and $\Coh_n(X)$, revealed new phenomena exclusive to the high-rank setting, namely, the emergence of multiple Rogers--Ramanujan type identities.

The primary target of these investigations is the family of toric singularities $y^a=x^b$, where $a,b\geq 1$. This family is of significant interest in algebraic combinatorics, as its rank-$1$ story is known to be governed by Catalan combinatorics \cite{gorskymazin2013compactified1,gmo2023generic,gmv2016affine,gmv2017rational}. The high-rank study can thus be viewed as an attempt to find a regime for a high-rank generalization of $q,t$-Catalan numbers, one whose rank-to-infinity limit encodes these multiple Rogers--Ramanujan type identities. A work in preparation by the second author, Jiang and Oblomkov finds the corresponding identities for \emph{unibranched} singularities concerning the case where $a$ and $b$ are coprime, thereby confirming the presence of this generalized Catalan structure. However, the \emph{multibranched} case with $a$ and $b$ not coprime appears much more subtle, even for the first non-trivial case $(a,b)=(2,2m)$. The subtlety of general non-coprime pairs $(a,b)$ is already hinted at in rank one, by the occurrence of multiple notions of non-coprime $q,t$-Catalan numbers \cite{gmv2017rational}. Prior study of this case by the second author \cite{huang2025coh} led to a key insight --- for multibranched singularities, a purely geometric perspective over $\C$ is insufficient; instead, an \emph{arithmetic} consideration is essential, requiring the study of all possible models over non-algebraically closed fields that differ by Galois twists.

The goal of the present paper is to leverage this arithmetic perspective to completely settle the $(2,2m)$ case.

We restrict our attention to the arithmetic setting, focusing on point counts over a finite field $\Fq$. We begin by defining the relevant Coh and Quot zeta functions, formulated over a commutative ring $R$.

Let $R$ be a commutative ring with $1$. The \emph{Coh zeta function} of $R$ is defined as the (formal) Dirichlet series
\begin{equation}
	\zetahat_R(s):=\sum_{Q\in \mathbf{FinMod_R}/{\sim}} \frac{1}{\abs{\Aut_R(Q)}} \abs{Q}^{-s},
\end{equation}
summing over all isomorphism classes of finite(-cardinality) modules over $R$. Given an $R$-module $M$, the \emph{Quot zeta function} of $M$ over $R$ is defined as
\begin{equation}
	\zeta_M(s)=\zeta_M^R(s):=\sum_{L\subeq_R M} \abs{M/L}^{-s},
\end{equation}
summing over all finite-index $R$-submodules of $M$. These Dirichlet series are said to be well-defined if for each $n\in \Z_{\geq 1}$, there are only finitely many summands contributing to $n^{-s}$. Specifically, of key concern is the \emph{rank-$r$ Quot zeta function} of $R$, which is simply the Quot zeta function of the free module $R^r$.

These zeta functions are directly connected to point counts of the stack of coherent sheaves and the Quot scheme of points in the sense that if $X = \Spec R$ is an affine variety over $\Fq$, then
\begin{align*}
	\zeta_{R^r}(s)&=\sum_{n\geq 0}\abs{\Quot_n(\mathcal{O}_X^r)(\Fq)} t^n,\\
	\zetahat_{R}(s)&=\sum_{n\geq 0}\abs{\Coh_n(X)(\Fq)} t^n,
\end{align*}
where $t:=q^{-s}$.

The particular normalization of the Quot zeta function we shall consider, which best elucidates the connection to the Coh zeta function, is the \emph{finitized rank-$n$ Coh zeta function}
\begin{equation}
	\zetahat_{R,n}(s):=\zeta^R_{R^n}(s+n).
\end{equation}
According to \cite{huangjiang2023torsionfree}, we have the coefficient-wise limit
$$\lim_{n\to \infty} \zetahat_{R,n}(s)=\zetahat_R(s),$$
justifying the terminology.

The rings we focus on are as follows. Fix a finite field $\Fq$ and an integer $m\geq 1$. Following the notation of \cite{huang2025coh}, define
\begin{align*}
	R_{2,2m+1}&:=\Fq[[T^2,T^{2m+1}]]\subeq \Fq[[T]],\\
	R_{2,2m}&:=\Fq[[X,Y]]/(Y(Y-X^m)),\\
	R_{2,2m}'&:=\Fq[[T]]+T^m \Fqsq[[T]]\subeq \Fqsq[[T]].
\end{align*}
The rings $R_{2,2m+1}$ and $R_{2,2m}$ are the germs of the $(2,2m+1)$ cusp singularity and the $(2,2m)$ node singularity, respectively. The ring $R_{2,2m}'$ is the quadratic twist of $R_{2,2m}$ over the non-algebraically closed field $\Fq$. These three families of rings precisely classify all quadratic orders over the function field $\Fq((X))$ --- $R_{2,2m+1}$ are the \emph{ramified} orders, $R_{2,2m}$ are the \emph{split} orders, and $R_{2,2m}'$ are the \emph{inert} orders.

It has recently been discovered by the second author and Jiang \cite{huangjiang2023torsionfree} and the first author \cite{shane2024multiple} that the finitized Coh zeta functions are closely tied to several classical $q$-hypergeometric series in the ramified and split cases. We first recall the \emph{$t$-deformed finite Andrews--Gordon sum}, which is defined by
\begin{align}\label{eq:AG-finite-t}
	\AG_n^{(2m+3)}(t,q) := (q)_n \sum_{n_1,\ldots,n_m\ge 0} \frac{t^{\sum_{i=1}^m 2n_i} q^{\sum_{i=1}^m n_i^2}}{(q)_{n-n_m}(q)_{n_m-n_{m-1}}\cdots (q)_{n_2-n_1} (q)_{n_1}},
\end{align}
where we have adopted the standard \emph{$q$-Pochhammer symbols} for $N\in\mathbb{N}\cup\{\infty\}$:
\begin{align*}
	(A)_N = (A;q)_N:=\prod_{j=0}^{N-1} (1-A q^j),
\end{align*}
with the following compact notation also used:
\begin{align*}
	(A_1,A_2,\ldots,A_r;q)_N :=(A_1;q)_N(A_2;q)_N\cdots (A_r;q)_N.
\end{align*}
The limiting case of \eqref{eq:AG-finite-t} at $n\to\infty$ for the $t=1$ specialization appears on the sum side of the well-known \emph{``central'' Andrews--Gordon identity}:
\begin{align}\label{eq:AG-infinite}
	\AG_\infty^{(2m+3)}(1,q) = \frac{(q^{m+1},q^{m+2},q^{2m+3};q^{2m+3})_\infty}{(q;q)_\infty},
\end{align}
which was discovered by Andrews \cite[p.~4082, Theorem~1]{And1974} in relation to Gordon's partition theorem \cite{Gor1961}. It is notable that identities in the form of \eqref{eq:AG-infinite} are usually called \emph{multiple Rogers--Ramanujan type identities} due to their connection to the classical work of Rogers \cite{Rog1894} and Ramanujan \cite{Ram1914}. Next, our second $q$-hypergeometric series of interest is the \emph{$t$-deformed finite Bressoud sum} defined by
\begin{align}\label{eq:Br-finite-t}
	\Br_n^{(2m+2)}(t,q) := (q)_n \sum_{n_1,\ldots,n_m\ge 0} \frac{t^{\sum_{i=1}^m 2n_i} q^{\sum_{i=1}^m n_i^2}}{(q)_{n-n_m}(q)_{n_m-n_{m-1}}\cdots (q)_{n_2-n_1} (q)_{n_1}(-tq)_{n_1}}.
\end{align}
Similarly, setting $t=1$ and taking the $n\to \infty$ limit, we witness another multiple Rogers--Ramanujan type identity known as the \emph{``central'' Bressoud identity} \cite[p.~385, Theorem~1]{Bre80}:
\begin{align}
	\Br_\infty^{(2m+2)}(1,q) = \frac{(q^{m+1},q^{m+1},q^{2m+2};q^{2m+2})_\infty}{(q;q)_\infty}.
\end{align}

The $t$-deformations we have chosen may not appear to be the most natural from the $q$-theoretic point of view; see Section~\ref{sec:t-deform} for further discussions. However, these $t$-deformed $q$-hypergeometric sums, surprisingly, are the ones that give the finitized Coh zeta functions.

\begin{theorem}[cf.~\cite{huangjiang2023torsionfree} and \cite{shane2024multiple}]
	For each $m\ge 1$, the finitized Coh zeta functions for the ramified and split orders are given by
	\begin{align}
		\zetahat_{R_{2,2m+1},n}(s) &= \frac{1}{(tq^{-1};q^{-1})_n} \AG_n^{(2m+3)}(t,q^{-1}),\label{eq:coh-zeta-ramified}\\
		\zetahat_{R_{2,2m},n}(s) &= \frac{1}{(tq^{-1};q^{-1})_n} \Br_n^{(2m+2)}(-t,q^{-1}),\label{eq:coh-zeta-split}
	\end{align}
	where $t:=q^{-s}$.
\end{theorem}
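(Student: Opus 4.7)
The plan is to enumerate finite-index $R$-submodules $L\subeq R^n$ directly, stratifying by the $\widetilde R$-saturation $\widetilde L:=\widetilde R\cdot L$, where $\widetilde R$ denotes the integral closure of $R$. In the ramified case, $\widetilde R_{2,2m+1}=\Fq[[T]]$ is a DVR; in the split case, $\widetilde R_{2,2m}$ is a product of two DVRs, one per branch of the node. A finite-index $\widetilde R$-submodule of $\widetilde R^n$ is classified up to the action of $\GL_n(\widetilde R)$ by a partition (resp.\ a pair of partitions) of length at most $n$, and after reparameterization by column counts $0\le n_1\le\cdots\le n_m\le n$ this outer stratification produces the Hall-type weight $q^{\sum n_i^2}$ and, after absorbing the shift $s\mapsto s+n$, the $t^{2\sum n_i}$-monomial visible in \eqref{eq:AG-finite-t} and \eqref{eq:Br-finite-t}.

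For the inner sum, the task is to enumerate $R$-submodules $L\subeq \widetilde L$ with $\widetilde R\cdot L=\widetilde L$ for each fixed $\widetilde L$. Since the conductor quotient $\widetilde R/R$ is a finite-dimensional $\Fq$-vector space concentrated near the singularity, this reduces to a linear-algebraic problem: at each jump of the flag associated with the partition, one chooses a compatible $\Fq$-subspace of the appropriate layer, and counting such subspaces produces Gaussian binomial coefficients along with the denominator $(q)_{n-n_m}(q)_{n_m-n_{m-1}}\cdots(q)_{n_1}$ shared by both identities. The overall prefactor $1/(tq^{-1};q^{-1})_n$ then emerges from tracking $[\widetilde L:L]$ through the conductor filtration and combining with the normalization $s\mapsto s+n$.

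The principal obstacle lies in the split case. There, $\widetilde R_{2,2m}$ is a product of two DVRs, so the lattice data naively consists of two partitions, yet the right-hand side of \eqref{eq:coh-zeta-split} is only a single $m$-fold sum. The gluing relation $Y(Y-X^m)=0$ must force these two partitions to be ``synchronized'' along the conductor so as to collapse the double sum to a single one, while the residual freedom at the innermost step (level $n_1$) along the branch-swapping involution should provide precisely the extra denominator factor $(\pm tq^{-1};q^{-1})_{n_1}$ that distinguishes the Bressoud sum from the Andrews--Gordon sum, together with the sign substitution $t\mapsto -t$. Pinning down this gluing contribution with the correct sign, so that the result matches $\Br_n^{(2m+2)}(-t,q^{-1})$ rather than a closely related variant, is where I expect the main work of the proof to concentrate.
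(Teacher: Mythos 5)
This statement is not proved in the paper; it is recalled as a known result from \cite{huangjiang2023torsionfree} and \cite{shane2024multiple}. Still, your proposal underestimates the structure of the argument in a way that constitutes a genuine gap. The stratification framework you describe --- organizing $L\subseteq M$ by the $\tl R$-saturation $\tl L=\tl RL$ --- is indeed the right starting point (and is formalized precisely in Theorem~\ref{thm:rationality-arithmetic}, which moreover fixes a reference $\tl R$-lattice $\ut M\subseteq M$ so that the stratification is finite). But this geometric enumeration does \emph{not} directly produce the $m$-fold Andrews--Gordon or Bressoud sums. It produces a \emph{$2m$-fold} multisum with two families of indices: one coming from the cotype of the saturation $\tl L$ and one coming from the cotype of the boundary lattice $L_b=\tl L\cap M$. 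For the inert case this is exactly Corollary~\ref{coro:zeta-new} in this paper (indexed by $r_1,\dots,r_m$ and $s_1,\dots,s_m$); for the split case it is Theorem~9.11 of \cite{huangjiang2023torsionfree}. The passage from that $2m$-fold sum to the $m$-fold form is not part of the lattice enumeration at all --- it is a nontrivial $q$-hypergeometric collapse, carried out in \cite{shane2024multiple} for the split and ramified cases and in Section~\ref{sec:a-indep} of this paper for the inert case. Your sketch simply asserts the target sum emerges from the geometric side, with no mechanism for this reduction.

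Your account of the split case is also mechanistically wrong. You conjecture that the gluing relation $Y(Y-X^m)=0$ forces the two branch partitions to ``synchronize'' so that the double sum collapses geometrically, leaving only a residual freedom at level $n_1$. That is not what happens: the two partition indices remain genuinely independent in the geometric enumeration (they become the $r_i$'s and $s_i$'s of the $2m$-fold sum), and no such synchronization is visible at the level of lattices. The sign substitution $t\mapsto -t$ and the appearance of the extra factor $(-tq;q)_{n_1}$ in the Bressoud denominator are likewise outputs of the $q$-series identity, not of a branch-swapping involution on the moduli side. In short, what you call the ``principal obstacle'' is located in the wrong place --- the hard part is not tracking the conductor filtration but proving the $2m$-fold $=$ $m$-fold identity --- and your proposal contains no route around that obstacle.
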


A puzzling feature of \eqref{eq:coh-zeta-split} is the presence of $-t$.
In the direction toward a geometric object that more directly corresponds to the exactly $t$-deformed Bressoud sum $\Br_n^{(2m+2)}(t,q)$, the second author \cite{huang2025coh} shifted his attention to inert quadratic orders, objects that interplay not only with geometry but also with arithmetic. In particular, a similar sum-like expression, aligning with those in \eqref{eq:coh-zeta-ramified} and \eqref{eq:coh-zeta-split}, was conjectured for the finitized Coh zeta function in this inert case; see \cite[Conjecture~1.2]{huang2025coh}. The main objective of this work is to settle this puzzle, thereby completing the full picture of quadratic orders.

\begin{theorem}[{\cite[Conjecture~1.2]{huang2025coh}}]\label{thm:main}
	For each $m\ge 1$, the finitized Coh zeta function for the inert quadratic order $R'_{2,2m}$ is given by the directly $t$-deformed finite Bressoud sum
	\begin{align}\label{eq:main}
		\zetahat_{R'_{2,2m},n}(s) = \frac{1}{(tq^{-1};q^{-1})_n} \Br_n^{(2m+2)}(t,q^{-1}).
	\end{align}
\end{theorem}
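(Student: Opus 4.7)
My strategy follows the three-stage blueprint announced in the abstract. \textbf{Stage 1: a $2m$-fold multisum for $\zetahat_{R'_{2,2m},n}(s)$.} The first task is to enumerate finite-index $R'_{2,2m}$-submodules of $(R'_{2,2m})^n$. The natural approach is to work inside the ambient lattice $\Fqsq[[T]]^n$, which is a finitely generated module over $R'_{2,2m}$ of $\Fq$-codimension $mn$. Since $\Fqsq[[T]]$ is a discrete valuation ring, every $\Fqsq[[T]]$-sublattice of $\Fqsq[[T]]^n$ has a Smith/Hermite normal form, and one must track the additional stability imposed by $R'_{2,2m}=\Fq[[T]]+T^m\Fqsq[[T]]$. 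This constraint translates, via the $\Gal(\Fqsq/\Fq)$-action on the lattice generators modulo $T^m$, into a discrete set of "twist" indices, one per successive jump in the normal form. Combining these with the $m$ valuation indices of the SNF-type itself produces a closed-form $2m$-fold multisum for $\zetahat_{R'_{2,2m},n}(s)$, parallel to the computations that underlie \eqref{eq:coh-zeta-ramified} and \eqref{eq:coh-zeta-split}.

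\textbf{Stage 2: ghost parameter insertion.} Comparing the $2m$-fold multisum of Stage 1 with the $m$-fold Bressoud sum $\Br_n^{(2m+2)}(t,q^{-1})$, the discrepancy in summation depth is the central combinatorial obstacle. The plan is to insert an auxiliary "ghost" parameter $a$ into a unified $2m$-fold expression so that (i) one specialization of $a$ reproduces the geometric multisum from Stage 1, while (ii) a different specialization collapses $m$ of the summations and yields $\Br_n^{(2m+2)}(t,q^{-1})/(tq^{-1};q^{-1})_n$. Designing an ansatz that simultaneously accommodates both specializations --- one by direct pattern matching, the other by a visible telescoping of $m$ indices --- is delicate, and I would look for guidance in the shape of the Bressoud summand, in particular the asymmetric factor $(-tq)_{n_1}$ in the denominator, which should arise by summing out an "inner" ghost factor at the combinatorial specialization.

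\textbf{Stage 3: $a$-independence by $q$-series techniques.} The crux is to prove that the $a$-generalized sum is identically independent of $a$. This should proceed by purely $q$-hypergeometric manipulations: repeated use of $q$-Pochhammer product identities, the $q$-binomial theorem, and iterated $q$-Chu--Vandermonde (or Saalsch\"utz) summations should reduce the assertion to a telescoping identity in the inner summation variables. I expect the combination of Stages 2 and 3 --- engineering the correct $a$-insertion and then verifying $a$-independence --- to be the main obstacle; the geometric count in Stage 1 is conceptually parallel to prior work, but the bridge to the exactly $t$-deformed Bressoud sum is genuinely new and hinges on the ghost variable trick. Once all three stages are assembled, specializing $a$ to its geometric value yields $\zetahat_{R'_{2,2m},n}(s)$ and specializing to its combinatorial value yields $\Br_n^{(2m+2)}(t,q^{-1})/(tq^{-1};q^{-1})_n$; the established $a$-independence forces equality, proving \eqref{eq:main}.
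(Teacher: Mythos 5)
Your outline reproduces the three-stage blueprint announced in the abstract, and the concluding logical move --- establish $a$-independence, then specialize $a$ to collapse the $s$-indices and produce the $m$-fold Bressoud form --- is exactly how the paper closes the proof. But each stage as you have written it is missing the specific idea that makes it work, and Stage~1 in particular proposes a method that does not engage with the key obstruction. You suggest Smith/Hermite normal form inside $\Fqsq[[T]]^n$ plus tracking Galois ``twist indices'' modulo $T^m$. The paper instead applies the boundary-lattice rationality formula (Theorem~\ref{thm:rationality-arithmetic}) and must then enumerate \emph{extension fibers}, i.e.\ cTR $A$-submodules of a prescribed cotype contained in a prescribed cTR submodule. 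It explicitly remarks, at the start of Subsection~3.2, that the parametrization used for the ramified and split orders (via the second isomorphism theorem) fails in the inert case; this is resolved by the new co-totally-real Grassmannian/flag machinery and a transitive $\Aut_{\tl A}(\tl V)$-action that exploits the rectangular type of $\tl V$ (Lemma~\ref{lem:co-tot-real}, Theorem~\ref{thm:cTR-count}). An SNF-with-twists count would run into the same fiber-counting difficulty without a mechanism to resolve it, so Stage~1 as stated is not yet a viable path to Corollary~\ref{coro:zeta-new}.

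For Stage~2, reading the ghost variable off the asymmetry of $(-tq)_{n_1}$ is directionally sound but misses the actual anchor: the $a$-insertion $\cX_N^{(m)}(a,t,q)$ in \eqref{eq:X-def} is obtained by interpolating between the already-known split-case $2m$-fold multisum of \cite[Theorem~9.11]{huangjiang2023torsionfree} (recovered at $a=1$) and the newly computed inert-case multisum (recovered at $a=-1$ after $t\mapsto -t$), with $a=0$ killing the $s$-indices and yielding the Bressoud form. Without the split-case formula there is no principled way to guess the placement of the factors $(aq)_{r_1}$, $(aq)_{s_1}$, and $(atq)_{r_1}$. Stage~3 as you describe it ($q$-binomial theorem plus iterated $q$-Chu--Vandermonde/Saalsch\"utz telescoping) is consistent with, but substantially weaker than, the actual proof of Theorem~\ref{th:X-a-indep}: that argument first reformulates $\cX$ via a ${}_3\phi_2$ transformation and Heine's transformations, then introduces an auxiliary series $\cV_N^{(m)}$ with a recursion in $m$, and only after those reductions do Chu--Vandermonde-type evaluations finish the job. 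In short: the macro-plan is correct (it is, after all, announced in the abstract), but the proposal as written identifies none of the technical ideas that carry each stage, and the Stage~1 method as proposed would fail.
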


As we will see in Corollary~\ref{coro:zeta-new}, the finitized Coh zeta function $\zetahat_{R'_{2,2m},n}(s)$ is closely related to the following $2m$-fold multisum:
\begin{align*}
	&\sum_{\substack{r_1,\ldots,r_m\ge 0\\s_1,\ldots,s_m\ge 0}} \frac{t^{\sum_{i=1}^m (2r_i-s_i)} q^{\sum_{i=1}^m (r_i^2-r_is_i+s_i^2)}(-q;q)_{r_1}}{(q;q)_{n-r_m} (q;q)_{r_{m}-r_{m-1}}\cdots (q;q)_{r_2-r_1}(q;q)_{r_1}(t^2q^2;q^2)_{r_1}(-q;q)_{s_1}}\notag\\
	&\times \qbinom{r_m-s_{m-1}}{r_m-s_m}_q\qbinom{r_{m-1}-s_{m-2}}{r_{m-1}-s_{m-1}}_q\cdots \qbinom{r_{2}-s_1}{r_{2}-s_{2}}_q\qbinom{r_1}{r_1-s_1}_q,
\end{align*}
where the \emph{$q$-binomial coefficients} are defined by
\begin{align*}
	\qbinom{N}{M} = \qbinom{N}{M}_q:=\begin{cases}
		\dfrac{(q;q)_N}{(q;q)_M(q;q)_{N-M}}, & \text{if $0\le M\le N$},\\[10pt]
		0, & \text{otherwise}.
	\end{cases}
\end{align*}
In the meantime, a similar multisum appears in the split case \cite[Theorem~9.11]{huangjiang2023torsionfree}:
\begin{align*}
	&\sum_{\substack{r_1,\ldots,r_m\ge 0\\s_1,\ldots,s_m\ge 0}} \frac{t^{\sum_{i=1}^m (2r_i-s_i)} q^{\sum_{i=1}^m (r_i^2-r_is_i+s_i^2)}}{(q;q)_{n-r_m} (q;q)_{r_{m}-r_{m-1}}\cdots (q;q)_{r_2-r_1}(tq;q)_{r_1}^2(q;q)_{s_1}}\notag\\
	&\times \qbinom{r_m-s_{m-1}}{r_m-s_m}_q\qbinom{r_{m-1}-s_{m-2}}{r_{m-1}-s_{m-1}}_q\cdots \qbinom{r_{2}-s_1}{r_{2}-s_{2}}_q\qbinom{r_1}{r_1-s_1}_q.
\end{align*}
Observing the resemblance between the two expressions, it becomes natural to consider the following generalized multisum, with an additional parameter introduced:
\begin{align}\label{eq:X-def}
	&\cX_N^{(m)}(a,t,q)\notag\\
	&\quad:=(atq)_N\sum_{\substack{r_1,\ldots,r_m\ge 0\\s_1,\ldots,s_m\ge 0}} \frac{a^{\sum_{i=1}^m s_i}t^{\sum_{i=1}^m (2r_i-s_i)} q^{\sum_{i=1}^m (r_i^2-r_is_i+s_i^2)} (aq)_{r_1}}{(q)_{N-r_m} (q)_{r_{m}-r_{m-1}}\cdots (q)_{r_2-r_1}(q)_{r_1}(tq)_{r_1}(atq)_{r_1}(aq)_{s_1}}\notag\\
	&\quad\ \quad\times \qbinom{r_m-s_{m-1}}{r_m-s_m}\qbinom{r_{m-1}-s_{m-2}}{r_{m-1}-s_{m-1}}\cdots \qbinom{r_{2}-s_1}{r_{2}-s_{2}}\qbinom{r_1}{r_1-s_1}.
\end{align}
A crucial result that we are about to establish is that this newly inserted parameter $a$ is indeed a \emph{ghost}:

\begin{theorem}\label{th:X-a-indep}
	For every $N\ge 0$, the $2m$-fold sum $\cX_N^{(m)}(a,t,q)$ is independent of $a$. In particular,
	\begin{align}\label{eq:X-nice-exp}
		\cX_N^{(m)}(a,t,q) = \sum_{n_1,\ldots,n_m\ge 0} \frac{t^{\sum_{i=1}^m 2n_i} q^{\sum_{i=1}^m n_i^2}}{(q)_{N-n_m} (q)_{n_{m}-n_{m-1}}\cdots (q)_{n_2-n_1}(q)_{n_1}(tq)_{n_1}}.
	\end{align}
\end{theorem}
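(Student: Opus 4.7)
The plan is to split the argument into three stages: establish that $\cX_N^{(m)}(a,t,q)$ is a polynomial in $a$, evaluate it at $a=0$ to match the right-hand side of \eqref{eq:X-nice-exp}, and finally prove $a$-independence by $q$-hypergeometric manipulation.

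For the first two stages, the ratios
\[\frac{(atq;q)_N}{(atq;q)_{r_1}} = (atq^{r_1+1};q)_{N-r_1}, \qquad \frac{(aq;q)_{r_1}}{(aq;q)_{s_1}} = (aq^{s_1+1};q)_{r_1-s_1}\]
are polynomials in $a$, so each summand of $\cX_N^{(m)}(a,t,q)$ is a polynomial in $a$; and since the outer sum is finite (as $r_m\le N$), so is $\cX_N^{(m)}(a,t,q)$ itself. At $a=0$, the monomial $a^{\sum_i s_i}$ forces $s_1=\cdots=s_m=0$, every $q$-binomial $\qbinom{r_i-s_{i-1}}{r_i-s_i}_q$ reduces to $\qbinom{r_i}{r_i}_q=1$, and the remaining $a$-dependent $q$-Pochhammers all collapse to $1$, leaving exactly the right-hand side of \eqref{eq:X-nice-exp} under the identification $n_i=r_i$.

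The heart of the argument is $a$-independence. Since $\cX_N^{(m)}(a,t,q)$ is a polynomial in $a$, it suffices to verify that the coefficient of $a^k$ vanishes for every $k\ge 1$. I would attempt this by induction on $m$. In the base case $m=1$, the $s_1$-sum combined with the overall factor $(atq;q)_N(aq;q)_{r_1}/[(atq;q)_{r_1}(aq;q)_{s_1}]$, the $q$-binomial $\qbinom{r_1}{s_1}_q$, and the Gaussian weight $q^{s_1^2-r_1 s_1}$ should collapse, via a terminating $q$-Chu--Vandermonde or $q$-Pfaff--Saalsch\"utz summation, to an $a$-free closed form. For the inductive step, summing out the innermost pair $(r_1,s_1)$ in the same manner would reduce the problem to a $2(m-1)$-fold sum of the same shape (with suitably shifted parameters), to which the inductive hypothesis applies.

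The main obstacle is that each intermediate $s_i$ sits simultaneously inside two consecutive $q$-binomials, $\qbinom{r_i-s_{i-1}}{r_i-s_i}_q$ and $\qbinom{r_{i+1}-s_i}{r_{i+1}-s_{i+1}}_q$, as well as in the quadratic exponent $q^{s_i^2-r_i s_i}$; hence no single $s_i$-sum matches a textbook identity out of the box. The crucial technical move will be to find the right reparametrization --- plausibly $k_i = s_i - s_{i-1}$ --- that decouples the chain, so that the resulting $k_i$-sum becomes a classical terminating $q$-series summation whose closed form exactly cancels the $a$-dependence at level $i$ and allows the telescoping to propagate cleanly down the chain.
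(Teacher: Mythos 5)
Your first two stages are correct and match the paper: polynomiality in $a$ is immediate from the finiteness of the sum and the form of the Pochhammer ratios, and the $a=0$ evaluation (killing $s_1=\cdots=s_m=0$ and collapsing the $q$-binomials) is exactly how the paper deduces \eqref{eq:X-nice-exp} once $a$-independence is in hand. The problem is the third stage: what you offer is a plan, not a proof, and you explicitly flag the decisive step as unresolved. This is where the proposal has a genuine gap.

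The obstacle you name --- that each $s_i$ is shared by two consecutive $q$-binomials and the quadratic exponent, so no single $s_i$-sum is a textbook terminating summation --- is real, but the remedy you propose (``plausibly $k_i = s_i - s_{i-1}$'') is a guess with no evidence it decouples anything. The paper's substitution is different and more natural for the structure at hand: set $v_i = r_i - s_i$. Even after this, the argument is not a single Chu--Vandermonde: the paper first proves a technical one-variable lemma (their eq.\ (4.16)) using the $3\phi_2$ transformation (III.9) of Gasper--Rahman followed by Heine's second transformation, then invokes a second lemma from \cite{shane2024multiple} to repack the inner $q$-binomials, arriving at a reformulation of $\cX_N^{(m)}$ with an infinite-product prefactor. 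They then introduce an auxiliary multisum $\cV_N^{(m)}(a,t,q)$, establish a recursion in $m$ for it, and derive its closed form by induction --- the base case alone uses Heine's third transformation, the $q$-binomial theorem, Heine's first transformation, and an analytic-continuation step from $|t|<1$. Finally, the $a$-independence itself requires a separate argument in the $m=1$ and $m\ge 2$ cases, each invoking Heine's second transformation and $q$-Chu--Vandermonde again. None of this is a one-shot terminating summation, and the recursion does not return to ``a $2(m-1)$-fold sum of the same shape'' as you hoped; it passes through a structurally different intermediate object. In short, your induction-on-$m$ instinct and the idea of summing out inner variables via classical $q$-identities are in the right spirit, but the actual chain of transformations --- and in particular the substitution $v_i=r_i-s_i$, the auxiliary series $\cV$, and the $3\phi_2$/Heine machinery --- is the content of the proof, and your proposal does not supply it.
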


From a $q$-perspective, the $a$-independence for $\cX_N^{(m)}(a,t,q)$ is \emph{unexpected}, somehow even \emph{counter-intuitive}, because the parameter $a$ contributes to the multisum with respect to \emph{every} index $s_1,\ldots,s_m$. However, we shall show that this parameter eventually vanishes after a sequence of $q$-hypergeometric transformations.

So far, the arithmetic and geometric considerations have generated \emph{predictions} about $q$-series. Can this go the other way, where purely $q$-theoretic considerations generate guesses about algebraic geometry? The answer is optimistic. The study of the Quot zeta function has revealed phenomena that appear to have a structural explanation, but no framework has yet been proposed. In Section~\ref{sec:further-interpolations}, purely by interpolating the known Quot zeta functions, we come up with a $q$-hypergeometric definition of a refined polynomial $\cyZh_{R,n}(u,t,z)$ where $R$ is a quadratic order. We verify that certain uniform $q$-series identities persist on this refinement (cf.~Theorems \ref{thm:master-reflection} and \ref{thm:Zh-at-root}), suggesting that $\cyZh_{R,n}(u,t,z)$ might underlie an arithmetic or geometric framework that potentially extends beyond the case of quadratic orders.

\subsection*{Plan of the paper}

In Section~\ref{sec:prelim}, we summarize the geometric preliminaries in \cite{huangjiang2023torsionfree} and set up the necessary notation. In Section~\ref{sec:computation}, we apply the framework introduced earlier to compute the Quot and finitized Coh zeta functions for the inert quadratic orders, reaching a $2m$-fold multisum in Corollary~\ref{coro:zeta-new}. This $2m$-fold expression results in a proof of Theorem~\ref{thm:main} after assuming Theorem~\ref{th:X-a-indep}. In Section~\ref{sec:a-indep}, we complete the proof of Theorem~\ref{th:X-a-indep} by a purely $q$-theoretic approach. Finally, we raise two closing discussions, one on the selection of deformations for multiple Rogers--Ramanujan type sums in Section~\ref{sec:t-deform}, and the other on further interpolations of the normalized Quot zeta functions for the three quadratic orders in Section~\ref{sec:further-interpolations}.

\section{Preliminaries}\label{sec:prelim}

\subsection{Lattice zeta functions}

We recall some relevant definitions and properties from \cite[Sections~4 and 5]{huangjiang2023torsionfree}. Let $R$ be an arithmetic local order, $\tl R$ its normalization, and $K$ its total ring of fractions. The normalization $\tl R$ is a product of rings $\tl R=\prod_{i=1}^{b(R)} \tl R_i$, where for each $i$ with $1\leq i\leq b(R)$, the ring $\tl R_i$ is a discrete valuation ring (DVR), with residue field denoted by $\kappa_i$. Denote $q_i:=\abs{\kappa_i}$. The \emph{conductor} of $R$, denoted $\fc_R$, is the largest ideal of $\tl R$ that is contained in $R$. We have the finitude of $\abs{\tl R/R}$ and $\abs{R/\fc_R}$.

\begin{remark}
	In this paper, we will only work with the example
	\begin{align*}
		R=R_{2,2m}'=\Fq[[T]]+T^m\Fqsq[[T]].
	\end{align*}
	In this case, $\tl R=\Fqsq[[T]]$ and $K=\Fqsq((T))$ with $b(R)=1$. Also, we have $\kappa_1=\Fqsq$ with $q_1=q^2$. Finally, $\fc_R=(T^m)\tl R$, and $\abs{\tl R/R}=\abs{R/\fc_R}=q^m$. 
\end{remark}

For any finitely generated $\tl R$-module $\tl Q$, its \emph{rank} along the $i$-th branch is defined as
\begin{equation*}
	\rk_i(\tl Q):=\dim_{\kappa_i} \tl Q\otimes_{\tl R} \kappa_i.
\end{equation*}
An \emph{$R$-lattice} in $K^n$ is a finitely generated $R$-submodule $L\subeq K^n$ that spans $K^n$ over $K$. An $R$-\emph{sublattice} of an $R$-lattice $M$ in $K^n$ is an $R$-lattice in $K^n$ that is contained in $M$, or equivalently, a finite-index $R$-submodule of $M$. We use the notation $L\subeq_R M$ to mean that $L$ is an $R$-sublattice of $M$.

Given $L_1\subeq_R L_2$, define $(L_2:L_1):=\abs{L_2/L_1}$. Given an $R$-lattice $L$ in $K^n$, the \emph{extension} (or \emph{saturation}) of $L$ is defined as $\tl RL$, the smallest $\tl R$-lattice in $K^n$ containing $L$. Recall the Quot zeta function
\begin{equation*}
	\zeta_M(s)=\zeta_M^R(s):=\sum_{L\subeq_R M} (M:L)^{-s},
\end{equation*}
where the sum ranges over all finite-index $R$-submodules of $M$. If $M$ is an $R$-lattice, then the sum equivalently ranges over $R$-sublattices of $M$. If $R=\tl R$ is a DVR (product), the Quot zeta function of any $R$-lattice can be computed by the Solomon formula \cite{solomon1977zeta}:
\begin{equation}\label{eq:solomon-formula}
	\zeta_{\tl R^n}^{\tl R}(s)=\prod_{i=1}^l (q_i^{-s};q_i)_n^{-1}.
\end{equation}
If $R\neq \tl R$, then we will need the following formula to compute the Quot zeta functions of lattices:

\begin{theorem}
	[cf.~{\cite[Theorem~5.11]{huangjiang2023torsionfree}}]
	Let $R$ be an arithmetic local order, and $M$ be an $R$-lattice in $K^n$. Let $\ut M$ be any $\tl R$-lattice contained in $M$. The normalized $\zeta_M^R(s)$ is given by the finite sum
	\begin{equation}\label{eq:rational-formula-arithmetic}
		\nu_M^R(s) :=\frac{\zeta_M^R(s)}{\zeta_{\tl R^n}^{\tl R}(s)} = \sum_{(\tl L,L)\in B_R(M;\ut M)} (M:L)^{-s} \prod_{i=1}^{b(R)} (q_i^{-s};q_i)_{\rk_i(\tl L/\ut M)},
	\end{equation}
	where
	\begin{equation}
		B_R(M;\ut M)=\big\{(\tl L,L)\;:\;\tl L\supeq \ut M,\, L\subeq_R M,\, \tl RL=\tl L\big\}.
	\end{equation}
	\label{thm:rationality-arithmetic}
\end{theorem}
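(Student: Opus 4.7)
The plan is to reindex the defining sum $\zeta_M^R(s) = \sum_{L\subeq_R M}(M:L)^{-s}$ by the $\tl R$-saturation $\tl L := \tl R L$, and to isolate from this an infinite ``deep tail'' that reassembles into $\zeta_{\tl R^n}^{\tl R}(s)$, leaving the finite remainder $\nu_M^R(s)$ of~\eqref{eq:rational-formula-arithmetic}. Concretely, I would first regroup
\[
\zeta_M^R(s)=\sum_{\tl L\subeq_{\tl R}\tl RM}\ \sum_{\substack{L\subeq_R M\\ \tl RL=\tl L}}(M:L)^{-s},
\]
and note that for each fixed $\tl L$, the conductor inclusion $\fc_R\tl L=\fc_R L\subeq L\subeq \tl L\cap M$ bounds the inner index set: the quotient $\tl L/L$ is a finite-length $\tl R/\fc_R$-module, and the generation condition $\tl RL=\tl L$ becomes a Nakayama-style surjectivity modulo $\fc_R\tl L$.

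Next I would exploit the branchwise DVR structure $\tl R=\prod_{i=1}^{b(R)}\tl R_i$: every $\tl R$-sublattice of $\tl RM$ admits a branchwise Smith normal form against $\ut M$, which I would use to stratify the outer index by a ``bounded representative'' $\tl L^\#\supeq \ut M$ (an $\tl R$-lattice containing $\ut M$, of which there are only finitely many) together with a branchwise tuple of nonnegative integers recording a ``deep shift'' along each branch. The deep tuples parametrize exactly the $\tl R$-sublattices of $\tl R^n$ summed by Solomon's formula~\eqref{eq:solomon-formula}, and the index $(M:L)^{-s}$ factors multiplicatively under this decomposition.

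Summing over the deep-shift tuples then produces, along each branch $i$, the last $n-\rk_i(\tl L^\#/\ut M)$ factors of Solomon's geometric product, namely $(q_i^{-s+\rk_i(\tl L^\#/\ut M)};q_i)_{n-\rk_i(\tl L^\#/\ut M)}^{-1}$. Inserting the missing ``head'' yields the identity
\[
(q_i^{-s+\rk_i(\tl L^\#/\ut M)};q_i)_{n-\rk_i(\tl L^\#/\ut M)}^{-1}=(q_i^{-s};q_i)_{\rk_i(\tl L^\#/\ut M)}\cdot (q_i^{-s};q_i)_n^{-1},
\]
whose right-hand factor $(q_i^{-s};q_i)_n^{-1}$ contributes the $i$-th branch of $\zeta_{\tl R^n}^{\tl R}(s)$ while the left-hand factor is precisely the Pochhammer coefficient attached to $(\tl L^\#,L)$ in~\eqref{eq:rational-formula-arithmetic}. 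Taking the product over $i$ and summing over the finite set of bounded pairs $(\tl L^\#,L)\in B_R(M;\ut M)$ then completes the rational formula.

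\textbf{Main obstacle.} The key technical step is the branchwise decomposition in the second paragraph and, crucially, its compatibility with the inner sum over $L$. Since $L$ is only an $R$-submodule rather than an $\tl R$-submodule, the branchwise scaling on $\tl L$ must lift to $L$ in a way that simultaneously preserves the generation condition $\tl RL=\tl L$, the containment $L\subeq M$, and the multiplicativity of $(M:L)^{-s}$, all without double-counting across different bounded representatives $\tl L^\#$. This careful bookkeeping around the conductor $\fc_R$ is what forces the rational formula to collapse into the clean shape of~\eqref{eq:rational-formula-arithmetic}.
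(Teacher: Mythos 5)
The paper does not prove Theorem~\ref{thm:rationality-arithmetic} --- it is imported verbatim from \cite[Theorem~5.11]{huangjiang2023torsionfree} and used as a black box --- so there is no internal proof to compare against. I will therefore evaluate your proposal on its own terms.

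Your high-level outline (regroup $\zeta_M^R(s)$ by the saturation $\tl L=\tl RL$, stratify $\tl L$ by a bounded representative containing $\ut M$, and sum the ``deep'' tail to peel off $\zeta_{\tl R^n}^{\tl R}(s)$) is consistent with what a rationality argument of this kind must look like, and the Pochhammer factorization $(q_i^{-s+r};q_i)_{n-r}^{-1}=(q_i^{-s};q_i)_r\,(q_i^{-s};q_i)_n^{-1}$ at the end is a correct identity. However, there are two genuine gaps, and the second is not merely technical bookkeeping.

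First, the description of the deep data is internally inconsistent. You say it is ``a branchwise tuple of nonnegative integers'' (so $b(R)$ integers), and then that ``the deep tuples parametrize exactly the $\tl R$-sublattices of $\tl R^n$''; both cannot be right, and neither matches your own later claim that the deep sum yields $(q_i^{-s+r_i};q_i)_{n-r_i}^{-1}$, which is the Solomon zeta of $\tl R_i^{n-r_i}$ with an $r_i$-shift in $s$. Whatever the deep data is, on branch $i$ it must carry at least as much structure as an $\tl R_i$-sublattice of $\tl R_i^{\,n-r_i}$ (or its Hermite data, which is $n-r_i$ integers plus off-diagonal entries), and the weight $(M:L)^{-s}$ must factor across the stratification with the nontrivial exponent shift $s\mapsto s-r_i$. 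A tuple of $b(R)$ integers cannot carry this, and the exponent shift is nowhere justified.

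Second, you explicitly name the main obstacle --- lifting the branchwise scaling on $\tl L$ to $L$ compatibly with $\tl RL=\tl L$, $L\subeq M$, and multiplicativity of $(M:L)^{-s}$, without double-counting --- and then leave it entirely unresolved. This is the actual content of the theorem: one needs a weight-preserving bijection between $\{L\subeq_R M\}$ and pairs consisting of an element of the finite set $B_R(M;\ut M)$ together with the deep data, and since $L$ is only an $R$-module (not $\tl R$-stable) with the conductor giving only a one-sided bound $\fc_R\tl L\subeq L\subeq \tl L$, the construction of this bijection is exactly where all the difficulty lives. As written, the proposal is a plausible plan, not a proof.
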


\subsection{Combinatorics of DVR modules}

Let $(V,\pi,\Fq)$ be a DVR with uniformizer $\pi$ and finite residue field $\Fq$. Given a finite module $M$ over $V$, the \emph{type} of $M$ over $V$ is the unique integer partition $\lambda=(\lambda_1, \lambda_2, \ldots, \lambda_\ell)$ such that $M\simeq_V M_V(\lambda):=\bigoplus_i V/\pi^{\lambda_i}$. Here an \emph{integer partition} $\lambda$ is a weakly decreasing sequence of positive integers $\lambda_1\ge \lambda_2\ge \cdots \ge\lambda_\ell$ with its \emph{size} defined by $|\lambda| := \sum_i \lambda_i$; see \cite{And1998} for a rich introduction. The \emph{rank} of a $V$-module $M$ is given by
\begin{align*}
	\rk_V(M):=\dim_{\Fq} W\otimes_V \Fq.
\end{align*}
The rank and type are related by
\begin{align*}
	\rk_V(M_V(\lambda))=\lambda_1' =: \ell,
\end{align*}
the \emph{length} of the partition $\lambda$.

Let $B=V/\pi^m$ for some $m\geq 1$. Then a $B$-module $M$ is simply a $V$-module whose type $\lambda$ satisfies $\lambda_1\leq m$. The category of $B$-modules is a full subcategory of the category of $V$-modules, so for $B$-modules, the notions $\simeq_B$ and $\simeq_V$ are equivalent, and $\subeq_B$ and $\subeq_V$ mean the same thing. For a finite $B$-module $M$, we refer to the type of $M$ over $V$ as the type of $M$ over $B$, and denote $\type_B(M):=\type_V(M)$. Conversely, for $\lambda_1\leq m$, denote $M_B(\lambda):=M_V(\lambda)$. For $V$-modules $L\subeq M$, the \emph{cotype} of $L$ in $M$ is the type of $M/L$. 

Now let us recall the notion of \emph{Hall polynomial}:

\begin{theorem}[cf.~{\cite[\S II]{macdonaldsymmetric}}]
	For any partitions $\lambda$, $\mu$, and $\nu$, there is a universal polynomial (called the \emph{Hall polynomial}) $g^\lambda_{\mu\nu}(t)\in \Z[t]$ such that for any DVR $(V,\pi,\Fq)$, the number of $V$-submodules of $M_V(\lambda)$ of type $\mu$ and cotype $\nu$ is given by $g^{\lambda}_{\mu\nu}(q)$. Moreoever, $g^{\lambda}_{\mu\nu}(t)=g^\lambda_{\nu\mu}(t)$.
\end{theorem}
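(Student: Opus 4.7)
The plan is to handle polynomiality and symmetry separately.

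For polynomiality, I would fix a DVR $(V,\pi,\Fq)$ with $q = |\Fq|$ and write $G(V)$ for the count of submodules $N \subseteq M_V(\lambda)$ with $\type_V(N) = \mu$ and $\type_V(M_V(\lambda)/N) = \nu$. By orbit-stabilizer, $G(V) \cdot |\Aut_V(M_V(\mu))| \cdot |\Aut_V(M_V(\nu))|$ equals the number of short exact sequences $0 \to M_V(\mu) \to M_V(\lambda) \to M_V(\nu) \to 0$, since such sequences are permuted freely by $\Aut_V(M_V(\mu)) \times \Aut_V(M_V(\nu))$. The automorphism counts are given by a standard product formula and are manifestly polynomials in $q$. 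For the sequence count, I would use the $q$-polynomiality of $|\Hom_V(M_V(\mu), M_V(\lambda))| = q^{\sum_{i,j}\min(\mu_i,\lambda_j)}$, then impose injectivity and the cokernel-type constraint via an inductive argument on $|\mu|$ (removing one box at a time, in the spirit of Steinitz, with the base case $\mu = \emptyset$ trivial). Each induction step introduces only $q$-polynomial factors, so $G(V)$ emerges as a polynomial in $q$ with integer coefficients depending only on $\lambda, \mu, \nu$.

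For the symmetry, I would invoke Matlis duality. Let $K = \Frac(V)$ and $D(-) := \Hom_V(-, K/V)$. Then $D$ is an exact contravariant self-equivalence of the category of finite $V$-modules, and since each cyclic summand $V/\pi^{\lambda_i}$ is $D$-self-dual, one has $D(M_V(\lambda)) \simeq M_V(\lambda)$ after fixing an isomorphism. Given $N \subseteq M_V(\lambda)$ of type $\mu$ and cotype $\nu$, applying $D$ to $0 \to N \to M_V(\lambda) \to M_V(\lambda)/N \to 0$ realizes $D(M_V(\lambda)/N)$ as a submodule of $M_V(\lambda)$ of type $\nu$ and cotype $\mu$. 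This assignment is a bijection, hence $g^\lambda_{\mu\nu}(q) = g^\lambda_{\nu\mu}(q)$ for all prime powers $q$, and so as polynomials.

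The main obstacle is the polynomiality argument; the induction step requires identifying, at each stage, the number of submodules $N$ whose intersection with a fixed ``elementary'' submodule is prescribed, controlled by $q$-binomial-like quantities whose polynomiality must be carefully verified. A cleaner alternative, which I would adopt if the direct combinatorics proves unwieldy, is to appeal to the explicit Littlewood--Richardson formula for $g^\lambda_{\mu\nu}(t)$ in terms of LR tableaux (Macdonald, Chapter~II, Theorem~4.3), reducing polynomiality to a direct combinatorial count; the symmetry then also follows transparently from the LR symmetry of the tableau combinatorics.
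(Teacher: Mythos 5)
The paper does not prove this statement; it simply cites Macdonald's book, so there is no internal proof to compare against. Your proposal is therefore best judged on its own merits.

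Your symmetry argument via Matlis duality is correct and is essentially the argument Macdonald himself gives (II.(2.2)): $D(-)=\Hom_V(-,K/V)$ is an exact contravariant self-equivalence on finite-length $V$-modules with $D(V/\pi^n)\simeq V/\pi^n$, hence $D(M_V(\lambda))\simeq M_V(\lambda)$, and dualizing $0\to N\to M_V(\lambda)\to M_V(\lambda)/N\to 0$ swaps type and cotype; this holds over any DVR with finite residue field, not only complete ones, since a finite-length module factors through some $V/\pi^n$. One small caution: the bijection depends on a choice of isomorphism $D(M_V(\lambda))\simeq M_V(\lambda)$, but different choices differ by an automorphism of $M_V(\lambda)$, which preserves type and cotype, so the count is unaffected.

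For polynomiality, your orbit-stabilizer reduction to counting short exact sequences is correct, and $|\Aut_V(M_V(\mu))|$ is indeed a polynomial in $q$ by the product formula (recorded as \eqref{eq:aut} in the paper). However, the ``Steinitz-style'' induction you sketch is precisely where all the difficulty lives, and the sketch does not address the central obstruction: when you strip a box from $\mu$, the number of ways to extend a submodule of type $\mu'$ to one of type $\mu$ inside $M_V(\lambda)$ \emph{with prescribed cotype} $\nu$ depends a priori on more than the triple of partitions, so one must prove that the relevant intermediate counts are themselves governed by partitions alone before the induction closes. This is the content of Macdonald II.4 (the reduction to LR sequences/tableaux), and it is not a short argument. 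Your instinct to fall back on Macdonald's explicit formula (II.(4.3)) is the right call and is consistent with how the paper treats the result (as a citation), but as written the inductive sketch should not be presented as a self-contained proof --- it is a restatement of the problem rather than a solution to it.
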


We will only use a \emph{coarser version} of the Hall polynomial:

\begin{definition}
	For partitions $\lambda$ and $\mu$, let $g^\lambda_\mu(t):=\sum_\nu g^\lambda_{\mu\nu}(t)$. 
\end{definition} 

Clearly, $g^\lambda_\mu(q)$ counts submodules of $M_V(\lambda)$ of type $\mu$, as well as submodules of $M_V(\lambda)$ of cotype $\mu$. An explicit expression of $g^\lambda_\mu(t)$ is as follows:

\begin{theorem}[cf.~{\cite{warnaar2013remarks}}]\label{thm:hall_skew}
	For any partitions $\lambda$ and $\mu$, we have
	\begin{equation}\label{eq:g-formula}
		g^{\lambda}_{\mu}(t)=t^{\sum_{i\geq 1}\mu_i'(\lambda_i'-\mu_i')}\prod_{i\geq 1} {\lambda_i'-\mu_{i+1}' \brack \lambda_i'-\mu_i'}_{t^{-1}},
	\end{equation}
	where as usual, $\lambda_i'$ denotes the $i$-th part of the conjugate partition $\lambda'$ of $\lambda$.
\end{theorem}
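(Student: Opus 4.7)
The plan is to prove the formula by a direct enumeration, decomposing any $V$-submodule $N \subseteq M = M_V(\lambda)$ of type $\mu$ via the induced $\pi$-adic filtration on $M$. For each $i \geq 1$, set $M^{(i)} := \pi^{i-1}M/\pi^i M$, an $\Fq$-vector space of dimension $\lambda_i'$; multiplication by $\pi$ induces surjective $\Fq$-linear maps $\bar\pi_i \colon M^{(i)} \twoheadrightarrow M^{(i+1)}$ whose kernel $K_i$ has dimension $\lambda_i' - \lambda_{i+1}'$. From $N$, extract the graded subspaces $\bar N_i := (N \cap \pi^{i-1}M + \pi^i M)/\pi^i M \subseteq M^{(i)}$. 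A first task is to relate the partition $\mu$ to the invariants $\dim \bar N_i$ and $\dim(\bar N_i \cap K_i)$; the expectation is that $\dim \bar N_i = \mu_i'$ together with $\bar\pi_i(\bar N_i) = \bar N_{i+1}$, forcing $\bar N_i \cap K_i$ to have dimension $\mu_i' - \mu_{i+1}'$.

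In Step 1, I would enumerate the admissible systems $(\bar N_i)$ of subspaces inside the tower $(M^{(i)}, \bar\pi_i)$ subject to these dimension and surjectivity constraints. Working recursively downward from a high level where $\bar N_i = 0$, at each step $\bar N_i$ is chosen as a $\mu_i'$-dimensional subspace of $\bar\pi_i^{-1}(\bar N_{i+1})$ that surjects onto $\bar N_{i+1}$. A direct $t$-count --- first choose the intersection $\bar N_i \cap K_i$ as a subspace of $K_i$, then choose a section of $\bar\pi_i^{-1}(\bar N_{i+1})/(\bar N_i \cap K_i) \twoheadrightarrow \bar N_{i+1}$ --- classifies such subspaces by a product of Gaussian binomials that I expect to consolidate, after telescoping across levels, into the product $\prod_i \qbinom{\lambda_i' - \mu_{i+1}'}{\lambda_i' - \mu_i'}_{t^{-1}}$ appearing in the theorem.

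In Step 2, for each admissible graded system $(\bar N_i)$, I would count the submodules $N \subseteq M$ inducing this data. Since the associated graded does not determine $N$, there are nontrivial choices of liftings between successive levels, and I expect these lifting freedoms to combine into the overall prefactor $t^{\sum_i \mu_i'(\lambda_i' - \mu_i')}$. As sanity checks, the extreme case $\lambda = (1^n)$ forces all $\bar\pi_i$ to vanish and reduces the count to subspaces of $\Fq^n$, recovering $\qbinom{n}{|\mu|}_t$; setting $\mu = \lambda$ should give $g^\lambda_\lambda(t) = 1$ (only the full submodule).

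The main obstacle is the lifting count in Step 2 --- one must show that nontrivial extensions between the graded pieces contribute cleanly as a pure power of $t$ with the asserted exponent, without cross-level interference from Ext-type coupling. Rather than confronting this directly, an alternative (and possibly cleaner) strategy is to derive a recurrence satisfied by $g^\lambda_\mu(t)$, for instance by peeling off the longest part of $\lambda$ and stratifying submodules $N$ according to how they intersect the corresponding cyclic summand, and then verify via a $q$-binomial manipulation that the closed form \eqref{eq:g-formula} satisfies the same recurrence with the base case $\lambda = (1^n)$. This would trade geometric lifting calculations for pure algebraic bookkeeping and furnish a self-contained inductive proof.
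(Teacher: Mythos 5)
First, a framing note: the paper does not prove this theorem --- it is quoted from Warnaar's paper \cite{warnaar2013remarks} --- so there is no in-paper proof to compare against. I will therefore assess your proposal on its own terms.

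The central problem is that the graded invariant you build Step~1 around is misidentified, and this undermines the whole recursive enumeration. With $\bar N_i := (N \cap \pi^{i-1}M + \pi^i M)/\pi^i M \subseteq M^{(i)}$, a short computation using the modular law shows $\bar N_i$ is exactly the kernel of the induced surjection $M^{(i)} \twoheadrightarrow (M/N)^{(i)}$, so $\dim \bar N_i = \lambda_i' - \nu_i'$ where $\nu = \type(M/N)$ is the \emph{cotype} of $N$ in $M$ --- this is not $\mu_i'$ in general, since $\mu_i' + \nu_i' \neq \lambda_i'$ (only the sizes add). Likewise, $\bar\pi_i(\bar N_i) \subseteq \bar N_{i+1}$ is a genuine inclusion, not an equality. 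A minimal counterexample to both claims is $M = V/\pi^3$ with $N = \pi^2 M$, so $\lambda=(3)$ and $\mu=(1)$: here $\bar N_1 = \bar N_2 = 0$ while $\bar N_3 = N \neq 0$, so $\dim\bar N_1 = 0 \neq 1 = \mu_1'$ and $\bar\pi_2(\bar N_2) = 0 \subsetneq \bar N_3$. In particular the $\bar N_i$ tend to \emph{grow} rather than shrink with $i$, so the top-down recursion with $\bar N_i = 0$ at large $i$, and the surjectivity constraint, misparametrize the set of submodules. Your $\lambda=(1^n)$ sanity check passes only because $\pi M = 0$ there, collapsing the filtration and hiding the issue.

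Even if one repairs Step~1 by replacing ``type'' with ``cotype'' and the surjectivity constraint with the correct inclusion $\bar\pi_i(\bar N_i) \subseteq \bar N_{i+1}$, the level-by-level Gaussian count does not visibly consolidate into $\prod_i \qbinom{\lambda_i' - \mu_{i+1}'}{\lambda_i' - \mu_i'}_{t^{-1}}$: the dimensions that naturally enter that count are $\lambda_i' - \lambda_{i+1}'$ (for $K_i$) and the dimension increments of the $\bar N_i$, not the differences $\lambda_i' - \mu_{i+1}'$ and $\lambda_i' - \mu_i'$ of the closed form. So there is substantial unaddressed bookkeeping even before the lifting count in Step~2, which you yourself flag as the main obstacle. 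The alternative strategy you outline at the end --- stratifying by how $N$ meets a maximal cyclic summand and verifying a $q$-binomial recurrence for the closed form --- is a sensible and potentially cleaner route, but as presented it is a plan of a plan: the recurrence is neither stated nor verified, so no self-contained proof has been given.
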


We also need two well-known facts about rectangular partitions. 
\begin{lemma}
	Let $(m^n)$ denote the partition $(m,\dots,m)$ with $n$ parts of $m$. Then for any $\mu\subeq (m^n)$, there is a unique partition $\nu$ such that $g^{(m^n)}_{\mu\nu}\neq 0$. Moreover, $\nu$ is the complement of $\mu$ in $(m^n)$ (so $\nu_i=m-\mu_{n+1-i}$ for $1\leq i\leq n$ and $\nu_{n+1}=0$), denoted by $(m^n)-\mu$. 
	\label{lem:rectangular}
\end{lemma}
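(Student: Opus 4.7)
The plan is to prove the uniqueness of $\nu$ and the explicit complement formula $\nu=(m^n)-\mu$ simultaneously, by attaching to each $V$-submodule $L\subeq M_V((m^n))$ a diagonal decomposition from which both the type and the cotype can be read off at once. The essential tool is Smith normal form for modules over the DVR $V$.

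First I would realize $M_V((m^n))$ as the quotient $V^n/\pi^m V^n$, and to a given $V$-submodule $L\subeq M_V((m^n))$ attach its preimage $\widetilde L\subeq V^n$, a $V$-submodule of $V^n$ containing $\pi^m V^n$. Applying Smith normal form to the inclusion $\widetilde L\subeq V^n$ yields a $V$-basis $e_1,\ldots,e_n$ of $V^n$ and integers $0\leq a_1\leq a_2\leq\cdots\leq a_n\leq m$ with $\widetilde L=\bigoplus_{i=1}^n \pi^{a_i}V e_i$. Reducing modulo $\pi^m V^n$ gives a diagonal decomposition of $L$ inside $M_V((m^n))$:
\[
L=\bigoplus_{i=1}^n \pi^{a_i}B\,\bar e_i, \qquad B:=V/\pi^m,
\]
where $\bar e_i$ denotes the image of $e_i$.

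From this decomposition one immediately reads off
\[
L\cong \bigoplus_{i=1}^n V/\pi^{m-a_i}, \qquad M_V((m^n))/L\cong \bigoplus_{i=1}^n V/\pi^{a_i}.
\]
Sorting into weakly decreasing partitions then gives $\mu_i=m-a_i$ and $\nu_i=a_{n+1-i}$, whence
\[
\nu_i \;=\; m-\mu_{n+1-i},
\]
which is precisely the claim $\nu=(m^n)-\mu$. Since the $a_i$'s are recovered from $\mu$ by $a_i=m-\mu_i$, the cotype $\nu$ is forced by the type $\mu$, yielding the uniqueness statement.

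The argument is a single application of Smith normal form followed by a direct read-off, so no serious obstacle is expected. The only point requiring mild care is the bookkeeping when $\mu$ has fewer than $n$ parts: some $a_i$'s are then equal to $m$, making the corresponding summands $V/\pi^{m-a_i}$ trivial; the convention of padding $\mu$ (and $\nu$) with zeros up to length $n$ keeps the indexing compatible with the complementation formula.
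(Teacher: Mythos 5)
The paper states Lemma~\ref{lem:rectangular} as a ``well-known fact'' and supplies no proof for it, so there is no paper argument to compare against. Evaluated on its own terms, your Smith-normal-form argument is correct and is the standard way to see the claim: since $V$ is a PID and $\widetilde L\supseteq\pi^m V^n$ has full rank, there is a basis $e_1,\dots,e_n$ of $V^n$ with $\widetilde L=\bigoplus_i\pi^{a_i}Ve_i$, $0\le a_1\le\cdots\le a_n\le m$, and reducing modulo $\pi^m V^n$ simultaneously diagonalizes $L$ and $M/L$. The read-off $\pi^{a_i}B\bar e_i\cong V/\pi^{m-a_i}$ (because $\pi^{a_i}\bar e_i$ has annihilator $(\pi^{m-a_i})$ in $B=V/\pi^m$) and $B\bar e_i/\pi^{a_i}B\bar e_i\cong V/\pi^{a_i}$ gives exactly $\mu_i=m-a_i$ and $\nu_i=a_{n+1-i}$, hence $\nu=(m^n)-\mu$; and since the $a_i$ are recovered from $\mu$, the cotype is forced, which proves both uniqueness and the complement formula. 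Your remark about padding $\mu$ with zeros to length $n$ (the $a_i=m$ case) is the right bookkeeping point, and this construction also exhibits a submodule of type $\mu$, so existence of such a $\nu$ with $g^{(m^n)}_{\mu\nu}\neq 0$ is covered as well. No gaps.
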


\begin{lemma}\label{lem:transitive}
	Let $M$ be a $V$-module of type $(m^n)$, and let $N$ and $N'$ be two $V$-submodules of the same type $\mu$. Then there exists $g\in \Aut_V(M)$ such that $gN=N'$.
\end{lemma}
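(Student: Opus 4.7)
The plan is to invoke the Smith normal form (or elementary divisor theorem) for finitely generated modules over the discrete valuation ring $V$. Identify $M = B^n$ with $V^n/\pi^m V^n$, where $B = V/\pi^m$. Then any $B$-submodule $N \subseteq M$ lifts uniquely to a $V$-submodule $\widetilde{N} \subseteq V^n$ containing $\pi^m V^n$, via $\widetilde{N} := q^{-1}(N)$ where $q \colon V^n \twoheadrightarrow M$ is the quotient map.

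Next I would apply the Smith normal form theorem to $\widetilde{N} \subseteq V^n$: there exists a $V$-basis $f_1,\ldots,f_n$ of $V^n$ and integers $0 \le a_1 \le a_2 \le \cdots \le a_n$ such that $\widetilde{N} = \bigoplus_{i=1}^{n} \pi^{a_i} V f_i$. Since $\widetilde{N} \supseteq \pi^m V^n$, each $a_i \le m$. Passing back to $M$, one sees $N = \bigoplus_i \pi^{a_i} B \cdot \overline{f_i}$, so the type of $N$ is determined by the sequence $(a_i)$ via $\mu_j = m - a_{n+1-j}$ for indices $j$ with $a_{n+1-j} < m$. In particular, two submodules of the same type $\mu$ produce the same multiset $\{a_i\}$.

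Now, given two $B$-submodules $N, N' \subseteq M$ of the same type $\mu$, Smith normal form yields $V$-bases $f_1,\ldots,f_n$ and $f'_1,\ldots,f'_n$ of $V^n$ and a \emph{common} sequence $0 \le a_1 \le \cdots \le a_n \le m$ such that
\begin{equation*}
\widetilde{N} = \bigoplus_{i=1}^{n} \pi^{a_i} V f_i, \qquad \widetilde{N}' = \bigoplus_{i=1}^{n} \pi^{a_i} V f'_i.
\end{equation*}
Let $g \in \GL_n(V) = \Aut_V(V^n)$ be the $V$-linear automorphism sending $f_i \mapsto f'_i$. Then $g(\widetilde{N}) = \widetilde{N}'$, and since $g$ automatically preserves $\pi^m V^n$ (as a $V$-module automorphism), it descends to an automorphism $\bar g \in \Aut_B(M)$ satisfying $\bar g(N) = N'$, as desired.

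There is no real obstacle beyond invoking Smith normal form; the only thing to be careful about is confirming that the Smith normal form data is indeed a function of the type $\mu$ alone (independent of $N$). This follows because the $V$-module $\widetilde{N}$ is isomorphic to $V^{n - \ell(\mu)} \oplus \bigoplus_{j} \pi^{m - \mu_j} V$ as an abstract $V$-module (so the $a_i$'s are the multiset consisting of $m-\mu_j$ for each part of $\mu$, padded by zeros), which is intrinsic to the isomorphism class of $\widetilde{N}$ and hence of $N$.
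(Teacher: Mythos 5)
Your argument is correct and reaches the same conclusion, but it is a genuinely different route from the paper's. The paper dualizes: it invokes Lemma~\ref{lem:rectangular} to deduce that $N$ and $N'$ have the same cotype, writes down surjections $\varphi,\varphi'\colon M \twoheadrightarrow Q$ with kernels $N,N'$, and then appeals to an external ``lifting'' lemma ({\cite[Lemma 5.1]{huangjiang2023torsionfree}}) asserting that any two such surjections from a free $B$-module differ by an automorphism of the source. You instead lift $N,N'$ to full-rank $V$-sublattices $\widetilde N,\widetilde N'\subseteq V^n$, run Smith normal form over the DVR $V$, observe that the elementary-divisor data is forced by $\mu$, and then match the two adapted bases. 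Your approach is more elementary and self-contained (no appeal to the lifting lemma or even to Lemma~\ref{lem:rectangular}), at the cost of being more computational; the paper's is slicker but outsources the real work.

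Two small blemishes, neither fatal. First, the indexing $\mu_j = m - a_{n+1-j}$ is reversed: with $a_1\le\cdots\le a_n$ one has $\mu_j = m - a_j$ (for $a_j < m$), since $m-a_1\ge\cdots\ge m-a_n$ is already decreasing. Second, the last paragraph's assertion that $\widetilde N \cong V^{n-\ell(\mu)}\oplus\bigoplus_j \pi^{m-\mu_j}V$ ``as an abstract $V$-module'' is incorrect --- each $\pi^{m-\mu_j}V$ is free of rank one, so that direct sum is just $V^n$, and the abstract isomorphism class of $\widetilde N$ carries no information about the $a_i$. The invariants are attached to the \emph{pair} $(\widetilde N\subseteq V^n)$, or equivalently to the quotient $V^n/\widetilde N \cong M/N$; also, the ``padding'' should be by copies of $m$, not zeros. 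But this paragraph is redundant: your second paragraph already correctly extracts $N\cong\bigoplus_i V/\pi^{m-a_i}$ and hence shows the multiset $\{a_i\}$ is determined by $\mu$, which is all the argument needs.
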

\begin{proof}
	Note that $M$ is a free module of rank $n$ over the ring $B=V/(\pi^m)$. Consider a $V$-module $Q$ of type $(m^n)-\mu$, which is the cotype of both $N$ and $N'$ by Lemma~\ref{lem:rectangular}. Thus, there are surjections $\varphi$ and $\varphi'$ from $M$ to $Q$ whose kernels are $N$ and $N'$, respectively. The conclusion then follows from applying \cite[Lemma~5.1]{huangjiang2023torsionfree} to the two surjections.
\end{proof}

Finally, we recall the number of automorphisms of a DVR module:

\begin{lemma}[cf.~{\cite[p.~181]{macdonaldsymmetric}}]
	For any partition $\lambda$, we have
	\begin{equation}\label{eq:aut}
		\abs{\Aut_V(M_V(\lambda))}=a_\lambda(q):=q^{\sum \lambda_i'^2} \prod_{i\geq 1}(q^{-1};q^{-1})_{\lambda_i'-\lambda_{i+1}'}.
	\end{equation}
\end{lemma}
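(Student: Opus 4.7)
The plan is to invoke Nakayama's lemma to reduce the question to a matrix count over the residue field $\Fq$, and then read off the answer from the parabolic shape that arises from the decomposition of $M$ into cyclic summands.

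Since $M = M_V(\lambda)$ is finitely generated over the local ring $V$, an endomorphism $\phi \in \End_V(M)$ is surjective if and only if its reduction $\bar\phi$ on $M/\pi M$ is surjective; and since $M$ is finite, surjectivity coincides with bijectivity. Thus it suffices to count endomorphisms of $M$ whose image in $\End_{\Fq}(M/\pi M)$ is invertible. Decomposing $M = \bigoplus_{i=1}^\ell V/\pi^{\lambda_i}$ with $\lambda_1 \ge \cdots \ge \lambda_\ell$ and writing an endomorphism as a matrix $(\phi_{ij})$ with $\phi_{ij} \in \Hom_V(V/\pi^{\lambda_j}, V/\pi^{\lambda_i})$, one has $|\Hom_V(V/\pi^{\lambda_j}, V/\pi^{\lambda_i})| = q^{\min(\lambda_i, \lambda_j)}$; a swap-of-summation then gives $\sum_{i,j} \min(\lambda_i, \lambda_j) = \sum_k (\lambda_k')^2$, hence $|\End_V(M)| = q^{\sum_k (\lambda_k')^2}$, matching the prefactor in the claimed formula.

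Next I would analyze the reduction map $\End_V(M) \to \End_{\Fq}(M/\pi M) \cong \Mat_\ell(\Fq)$. When $\lambda_i > \lambda_j$, every element of $\Hom_V(V/\pi^{\lambda_j}, V/\pi^{\lambda_i})$ is divisible by $\pi^{\lambda_i - \lambda_j}$ and therefore reduces to $0$. The image is thus the subalgebra $P \subset \Mat_\ell(\Fq)$ of matrices $(a_{ij})$ vanishing whenever $\lambda_i > \lambda_j$; since the kernel of reduction is an ideal, all fibers have the same cardinality $|\End_V(M)|/|P|$. Grouping indices by the value of $\lambda_i$, $P$ is block-triangular with square diagonal blocks of sizes $m_k := \lambda_k' - \lambda_{k+1}'$, the multiplicities of the parts of $\lambda$. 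An element of $P$ is invertible if and only if each diagonal block is, so
$$|P^\times| = |P| \cdot \prod_k \frac{|\GL_{m_k}(\Fq)|}{q^{m_k^2}} = |P| \cdot \prod_k (q^{-1}; q^{-1})_{m_k}.$$

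Combining, $|\Aut_V(M)|$ equals the fibre size times $|P^\times|$, which gives $|\End_V(M)| \cdot \prod_k (q^{-1}; q^{-1})_{m_k} = a_\lambda(q)$. The main obstacle is purely combinatorial bookkeeping: translating between parts of $\lambda$, the multiplicities $m_k$, and the differences $\lambda_k' - \lambda_{k+1}'$, and justifying the summation identity that yields the exponent $\sum_k(\lambda_k')^2$. Neither is deep, so once the Nakayama reduction is in place, the rest is a short counting exercise.
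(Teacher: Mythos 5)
The paper does not prove this lemma; it cites Macdonald's book (p.~181) for the result. Your proof is correct and follows the standard derivation: the count $\abs{\End_V(M)}=q^{\sum(\lambda_i')^2}$ via $\abs{\Hom_V(V/\pi^{\lambda_j},V/\pi^{\lambda_i})}=q^{\min(\lambda_i,\lambda_j)}$, the Nakayama reduction identifying automorphisms with the preimage of the invertible part of a parabolic $P\subset\Mat_\ell(\Fq)$ of block shape $(m_k)=(\lambda_k'-\lambda_{k+1}')$, and the factor $\abs{P^\times}/\abs{P}=\prod_k(q^{-1};q^{-1})_{m_k}$; this is essentially the same argument Macdonald gives, so there is nothing to reconcile against the paper.
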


\section{Computation of the Quot zeta function}\label{sec:computation}

Throughout this section, let $k=\Fq$ and $l=\Fqsq$. For $m\geq 1$, let
\begin{align*}
	R=R_{2,2m}'=k[[T]]+T^m l[[T]].
\end{align*}
Also, write $M=R^n$. The goal of this section is to give the first explicit formula of $\zeta_M^R(s)$. Recall $\tl R=l[[T]]$ and $K=l((T))$. In addition, $\kappa_1=l$ and $\fc=\fc_R=(T^m)l[[T]]$. We will apply Theorem~\ref{thm:rationality-arithmetic} with the choice $\ut M=\fc M=(T^m) l[[T]]^n$. Define rings
\begin{equation}
	A:=\frac{R}{\fc}\simeq \frac{k[[T]]}{T^m} \subeq \tl A:=\frac{\tl R}{\fc}\simeq \frac{l[[T]]}{T^m}.
\end{equation}

\subsection{Boundary lattices}

We begin with the part of arguments that are in common with \cite[Section~9]{huangjiang2023torsionfree}. For an $\tl R$-lattice $\tl L$, let its \emph{extension fiber} be 
\begin{equation*}
	E_R(\tl L):=\big\{L\subeq_R \tl L \;:\; \tl R L=\tl L\big\}.
\end{equation*}
If furthermore $L$ is an $R$-lattice, then the \emph{restricted extension fiber} is
\begin{equation*}
	E_R(\tl L;L):=\big\{L\subeq_R L \;:\; \tl R L=\tl L\big\}.
\end{equation*}
The set $B_R(M;\ut M)$ used in Theorem~\ref{thm:rationality-arithmetic} is naturally in bijection with the set
\begin{equation}\label{eq:boundary-locus}
	\big\{(L_b,L) \;:\; \text{$\ut M\subeq L_b\subeq_R M$ with $\tl RL_b\cap M=L_b$, and $L\in E_R(\tl R L_b;L_b)$}\big\}
\end{equation}
by the rules $L_b=\tl L\cap M$ and $\tl L=\tl RL_b$; see \cite[Section 9]{huangjiang2023torsionfree} for details. By abuse of notation, we denote the set \eqref{eq:boundary-locus} by $B_R(M;\ut M)$ as well. Let $\partial_R(M;\ut M)$ be the set consisting of \emph{boundary lattices} $L_b$, namely, those with $\ut M\subeq L_b\subeq_R M$ and $\tl RL_b\cap M=L_b$. Then the forgetful map $B_R(M;\ut M)\to \partial_R(M;\ut M)$ is surjective whose fiber over $L_b$ is $E_R(\tl RL_b;L_b)$.

\begin{lemma}\label{lem:boundary-lattice-invariants}
	Assume the notation above. Let $L_b$ be any rank-$n$ $R$-lattice such that $\ut M \subeq L_b \subeq M$, and let $\lambda$ be the type of the $A$-module $M/L_b$. Then
	\begin{enumerate}[label={\textup{(\alph*)}},leftmargin=*,labelsep=0cm,align=left,itemsep=2pt]
		\item We have $\tl RL_b\cap M=L_b$, so that $L_b\in \partial_R(M;\ut M)$;
		
		\item The type of the $A$-module $\tl R L_b / L_b$ is $(m^n)-\lambda$;
		
		\item The type of the $\tl A$-module $\tl R L_b / \ut M$ is $(m^n)-\lambda$.
		In particular, the rank of $\tl R L_b/\ut M$ along the first (and only) branch of $\tl R$ is
		\begin{equation*}
			\rk_1(\tl RL_b/\ut M)=n-\lambda_m'.
		\end{equation*}
	\end{enumerate}
\end{lemma}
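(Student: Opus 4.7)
The plan is to reduce everything modulo the conductor submodule $\ut M$ and then exploit the direct sum decomposition $\tl A = A \oplus \alpha A$ (where $\alpha \in l \setminus k$) arising from $l = k \oplus k\alpha$. Writing $L_b' := L_b/\ut M$, the hypothesis $\ut M \subeq L_b \subeq M$ translates into an $A$-submodule $L_b' \subeq M/\ut M \simeq A^n$; moreover $\tl R L_b/\ut M$ coincides with the $\tl A$-submodule $\tl A \cdot L_b'$ of $\tl R^n/\ut M \simeq \tl A^n$, because $\ut M$ already lies in $L_b$. Since the $A$-module $A^n$ has type $(m^n)$ with $L_b'$ having cotype $\lambda$ in it (which is precisely the given $A$-type of $M/L_b$), Lemma~\ref{lem:rectangular} at once gives that $L_b'$ itself has $A$-type $(m^n)-\lambda$.

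From $\tl A = A \oplus \alpha A$ we obtain the $A$-module decomposition $\tl A^n = A^n \oplus \alpha A^n$, and since $L_b' \subeq A^n$ forces $\alpha L_b' \subeq \alpha A^n$, this restricts to $\tl A L_b' = L_b' \oplus \alpha L_b'$ as $A$-modules. Parts (a) and (b) then follow directly. For (a), intersecting with $A^n$ kills the $\alpha L_b'$ summand, so $\tl A L_b' \cap A^n = L_b'$; lifting back, $\tl R L_b \cap M = L_b$. For (b), quotienting by $L_b'$ leaves $\alpha L_b' \simeq_A L_b'$ via multiplication by $\alpha$, so $\tl R L_b/L_b$ has $A$-type equal to that of $L_b'$, namely $(m^n)-\lambda$.

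Part (c) I would handle by flat base change: since $\tl A$ is free of rank two over $A$, tensoring the exact sequence $0 \to L_b' \to A^n \to M/L_b \to 0$ over $A$ with $\tl A$ stays exact and identifies $\tl A L_b' \simeq \tl A \otimes_A L_b'$ as $\tl A$-modules, with cokernel $\tl A \otimes_A M_A(\lambda) \simeq M_{\tl A}(\lambda)$, of $\tl A$-type $\lambda$. Lemma~\ref{lem:rectangular} applied inside the rectangular $\tl A$-module $\tl A^n$ of type $(m^n)$ then shows that $\tl R L_b/\ut M$ has $\tl A$-type $(m^n)-\lambda$; the branch rank is the length of this partition, counting indices $j$ with $\lambda_j < m$, which equals $n - \lambda_m'$ since $\lambda_m'$ is the number of $j$ with $\lambda_j = m$. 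The only point requiring care is the flat base change step, ensuring that the $\tl A$-module structure of $\tl A L_b'$ (not merely the $A$-structure) is recovered from $L_b'$ via $\tl A \otimes_A (-)$; once the decomposition $\tl A = A \oplus \alpha A$ is spelled out, every step is formal.
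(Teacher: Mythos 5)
Your proof is correct and takes essentially the same approach as the paper: reduce modulo $\ut M$ and exploit the decomposition $\tl A = A \oplus \Theta A$ with $\Theta$ acting injectively, which makes (a) and (b) immediate. The only variation is in part (c), where you prove $\tl A L_b' \simeq \tl A \otimes_A L_b'$ by flatness and then pass through the cokernel $\tl A \otimes_A M_A(\lambda) \simeq M_{\tl A}(\lambda)$ and Lemma~\ref{lem:rectangular}, whereas the paper directly reads off the $\tl A$-type of $W_b \otimes_A \tl A$ from the $A$-type of $W_b$ via $k[[T]]/(T^\ell) \otimes_A \tl A \simeq l[[T]]/(T^\ell)$; both work, and both hinge on the same point — that $L_b'$ sits inside $A^n$ (is totally real) so that $\tl A L_b'$ is the honest base change.
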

\begin{proof}
	Let $\tl L=\tl RL_b$. Consider the tower of lattices
	\[
	\begin{tikzcd}[row sep=tiny, column sep=large]
		& \tl R M \ar[dl, no head] \ar[dr, no head] & \\
		\tl L \ar[dr, no head] & & M \ar[dl, no head] \\
		& L_b \ar[d, no head] & \\
		& \ut M &
	\end{tikzcd}.
	\]
	Define $W_b:=L_b/\ut M$. Modulo $\ut M$, the tower becomes
	\[
	\begin{tikzcd}[row sep=tiny, column sep=large]
		& \tl A^n \ar[dl, no head] \ar[dr, no head] & \\
		\tl A W_b \ar[dr, no head] & & A^n \ar[dl, no head] \\
		& W_b \ar[d, no head] & \\
		& 0 &
	\end{tikzcd}.
	\]
	The $A$-cotype of $W_b$ in $A^n$ is $\lambda$, so the $A$-type of $W_b$ is $(m^n)-\lambda$.
	
	Let $\Theta$ be any element of $l\setminus k$. At this point, a crucial structural observation is that $\tl A=A\oplus \Theta A$; moreover, the multiplication map $A\to \tl A$ by $\Theta$ is injective. Since $W_b\subeq A^n$, this means $\tl AW_b=W_b\oplus \Theta W_b$ (with $\Theta$ acting injectively), so $\tl AW_b\cap A^n=W_b$ and $\tl AW_b/W_b\simeq \Theta W_b\simeq W_b$ as $A$-modules. This proves parts~(a) and (b) of the lemma.
	
	For part~(c), the assertion $\tl AW_b=W_b\oplus \Theta W_b$ with $\Theta$ acting injectively implies that $\tl AW_b\simeq_{\tl A} W_b\otimes_A {\tl A}$, namely, the structure of $\tl AW_b$ is determined intrinsically by the structure of $W_b$. Since for any $1\leq \ell\leq m$,
	\begin{equation}
		\frac{k[[T]]}{(T^\ell)} \otimes_A \tl A \simeq \frac{l[[T]]}{T^\ell},
	\end{equation}
	we conclude the final part of the lemma.
\end{proof}

\subsection{The poset structure of the extension fiber}

Let $\tl L$ be an $\tl R$-lattice, and let $L_b\in E_R(\tl L)$. In order to continue from the previous discussion, we need to understand the set $E_R(\tl L;L_b)$ in terms of the cotype of $L_b$ in $\tl L$. In other words, we need to understand the \emph{lower intervals} of the poset $E_R(\tl L)$ ordered by inclusion. Since every element of $E_R(\tl L)$ contains $\fc \tl L$, the poset $E_R(\tl L)$ is isomorphic to
\begin{equation*}
	E_A(\tl V):=\big\{W\subeq_A \tl V \;:\; \tl AW=\tl V\big\},
\end{equation*}
where $\tl V=\tl L/\fc \tl L \simeq \tl A^n$.

From this point, we diverge from the arguments in \cite{huangjiang2023torsionfree}. In the case of ramified and split orders, the condition $\tl AW=\tl V$ is equivalent to $\tl AW+V_1=V$ for some fixed submodule $V_1\subeq V$, and thus $E_A(\tl V)$ can be parametrized using the second isomorphism theorem; see \cite[Lemma~5.2]{huangjiang2023torsionfree}. In the inert case here, this argument no longer works. Instead, we need a new structural observation.

\subsection{Totally real submodules}

This subsection is not directly needed in the rest of the proof; instead, its \emph{dual} version will be used, and this is to be outlined in the next subsection. However, the moduli spaces defined in this subsection are more intuitive, so we give a detailed discussion to motivate our main idea.

Let $l/k$ be any quadratic field extension. Let $\Theta$ be any element of $l\setminus k$. Let
\begin{align*}
	A=k[[T]]/T^m\subeq \tl A=l[[T]]/T^m.
\end{align*}

\begin{definition}
	Let $\tl V$ be a finite-dimensional $l$-vector space (or a finite-length $\tl A$-module). A $k$-subspace $W\subeq_k \tl V$ (or an $A$-submodule $W\subeq_A \tl V$) is \emph{totally real} if $W\cap \Theta W=0$. 
\end{definition}
Note that for any $W\subeq_k \tl V$ (or $W\subeq_A \tl V$), the intersection $W\cap \Theta W$ is the largest $l$-subspace (or $\tl A$-submodule) contained in $W$; see \cite[Lemma~5.2]{huang2025coh}.

\begin{remark}\label{rmk:sub-real}
	If $\tl V$ is a finite-length $\tl A$-module, $W$ is a totally real $A$-submodule of $\tl V$, and $W'\subeq_A W$ is any $A$-submodule, then clearly $W'$ is also totally real. 
\end{remark}

If $\tl V$ is a finite-length $\tl A$-module and $W$ is a totally real $A$-submodule of $\tl V$, then
\begin{equation*}
	\tl AW=W\oplus \Theta W=lW
\end{equation*}
is isomorphic to $W\otimes_A \tl A$ as an $\tl A$-module; see the proof of Lemma~\ref{lem:boundary-lattice-invariants}. As a result, the $\tl A$-type of $\tl AW$ is the same as the $A$-type of $W$. If $W$ is a totally real $A$-submodule of $\tl V$, then the following relations are equivalent:
\begin{enumerate}[label={\textup{(\arabic*)~}},leftmargin=*,labelsep=0cm,align=left,itemsep=2pt]
	\item $\tl AW=\tl V$;
	
	\item $\dim_k W=\dim_l V$;
	
	\item $\type_A(W)=\type_{\tl A} V$.
\end{enumerate}
In this case, we say $W$ is a \emph{real structure} of $\tl V$.

Let $\tl V$ be a finite-length $l[[T]]$-module. For a partition $\lambda$, define the \emph{totally real Grassmannian}
\begin{equation*}
	\Gr^{\TR}_A(\lambda;\tl V) := \big\{W\subeq_A \tl V \;:\; \text{$W$ totally real and $\type_A(W)=\lambda$}\big\}.
\end{equation*}

\begin{lemma}\label{lem:real-struct-count}
	Let $\tl V$ be an $\tl A$-module of type $\lambda$. Then $\Aut_{\tl A}(\tl V)$ acts transitively on $\Gr^{\TR}_A(\lambda;\tl V)$, the set of real structures of $\tl V$. Moreover, the number of real structures of $\tl V$ is
	\begin{equation}
		\abs{\Gr^{\TR}_A(\lambda;\tl V)}=\frac{a_\lambda(q^2)}{a_\lambda(q)}.
	\end{equation}
\end{lemma}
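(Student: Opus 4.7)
The plan is to prove the transitivity claim first, then deduce the cardinality by orbit-stabilizer. The key structural input, implicit in the proof of Lemma~\ref{lem:boundary-lattice-invariants}, is that for any totally real $A$-submodule $W\subseteq \tl V$ the multiplication map $W\otimes_A \tl A \to \tl A W$ is an isomorphism of $\tl A$-modules; this uses $\tl A = A\oplus \Theta A$, which forces $W$ and $\Theta W$ to sit inside $\tl A W$ as an internal direct sum (any relation $w_1 + \Theta w_2 = 0$ with $w_1,w_2\in W$ would yield $w_2 \in W \cap \Theta W = 0$). In particular, if $W$ is a real structure, then $W\otimes_A \tl A \cong \tl V$ canonically, and $\type_A(W) = \type_{\tl A}(\tl V) = \lambda$.

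To anchor the argument I would exhibit a canonical real structure. Writing $\tl V \cong M_{\tl A}(\lambda) = \bigoplus_i \tl A/(T^{\lambda_i})$ and decomposing entrywise via $\tl A = A \oplus \Theta A$, the $A$-submodule $W_0 := \bigoplus_i A/(T^{\lambda_i})$ satisfies $W_0 \cap \Theta W_0 = 0$ and $\type_A(W_0) = \lambda$, and thus lies in $\Gr^{\TR}_A(\lambda;\tl V)$. For transitivity, let $W$ be any real structure. Since both $W_0$ and $W$ are $A$-isomorphic to $M_A(\lambda)$, choose any $A$-isomorphism $\psi: W_0 \xrightarrow{\sim} W$ and extend scalars to $\psi \otimes 1_{\tl A} : W_0 \otimes_A \tl A \xrightarrow{\sim} W \otimes_A \tl A$. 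Under the two canonical identifications $W_0 \otimes_A \tl A \cong \tl V \cong W \otimes_A \tl A$ from the previous paragraph, this becomes an element of $\Aut_{\tl A}(\tl V)$ carrying $W_0$ onto $W$. Composing two such maps produces, for any pair $W_1,W_2$ of real structures, an $\tl A$-automorphism of $\tl V$ sending $W_1$ to $W_2$.

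Finally, I would apply orbit-stabilizer with base point $W_0$. The stabilizer $H := \{g \in \Aut_{\tl A}(\tl V) : gW_0 = W_0\}$ maps to $\Aut_A(W_0)$ by restriction. This map is injective since $\tl A W_0 = \tl V$, and surjective because any $A$-automorphism $f$ of $W_0$ extends uniquely to $f \otimes 1_{\tl A}$ on $W_0 \otimes_A \tl A \cong \tl V$, which is an $\tl A$-automorphism preserving $W_0$. Hence $|H| = |\Aut_A(W_0)| = a_\lambda(q)$, while $|\Aut_{\tl A}(\tl V)| = a_\lambda(q^2)$ since $\tl A$ has residue field of size $q^2$; orbit-stabilizer then yields the formula $a_\lambda(q^2)/a_\lambda(q)$. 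The only potential obstacle is keeping the two canonical isomorphisms $W \otimes_A \tl A \cong \tl A W \cong \tl V$ tidy, but once that bookkeeping is in place everything reduces to standard orbit-stabilizer combinatorics.
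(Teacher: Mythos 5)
Your proof follows the same strategy as the paper: fix a base real structure, identify $\tl V$ with $W_0\otimes_A \tl A$, extend an $A$-isomorphism of real structures to an $\tl A$-automorphism for transitivity, and compute the cardinality by orbit-stabilizer with stabilizer $\Aut_A(W_0)$. You merely spell out a few bookkeeping steps (injectivity/surjectivity of the restriction map to the stabilizer) that the paper labels as clear; the underlying argument is identical.
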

\begin{proof}
	Using an isomorphism $\tl V\simeq M_{\tl A}(\lambda)$ and the standard real structure $M_A(\lambda)\subeq M_{\tl A}(\lambda)$, we see that $\Gr^{\TR}_A(\lambda;\tl V)$ is nonempty. We fix a real structure $V\subeq \tl V$, so we may identify $\tl V$ with $V\otimes_A \tl A$.
	
	Consider the action of $G=\Aut_{\tl A}(\tl V)$ on $\Gr^\TR_A(\lambda;\tl V)$. It is transitive --- Given any $W\in \Gr^\TR_A(\lambda;\tl V)$, there is an $A$-module isomorphism $W\simeq_A M_A(\lambda)\simeq V$. Tensoring this isomorphism with $\tl A$ gives an automorphism $\tl V=\tl AW\to \tl AV=\tl V$ that sends $W$ to $V$. This proves transitivity.
	
	The stabilizer of $V$ under $G$-action is clearly $H=\Aut_A(V)$. The claim then follows from the orbit-stabilizer theorem.
\end{proof}

For partitions $\lambda$ and $\mu$, define the \emph{totally real flag variety}
\begin{align*}
	\Fl^{\TR}_A(\mu, \lambda;\tl V) := \big\{(W_1,W_2) \;: &\;\;\text{$W_1\subeq_A W_2\subeq_A \tl V$ with $W_1,W_2$ totally real,}\\
	&\;\;\text{and $\type_A(W_1)=\mu$, $\type_A(W_2)=\lambda$}\big\}.
\end{align*}
We have forgetful maps
\[\begin{tikzcd}[sep=scriptsize]
	& \Fl^{\TR}_A(\mu, \lambda;\tl V) \\
	\Gr^{\TR}_A(\mu;\tl V) && \Gr^{\TR}_A(\lambda;\tl V)
	\arrow["{\pr_1}"', from=1-2, to=2-1]
	\arrow["{\pr_2}", from=1-2, to=2-3]
\end{tikzcd}.\]

It turns out that the structures of $\Gr_A^\TR(\lambda;\tl V)$ and $\Gr_A^\TR(\mu;\tl V)$, along with the fibers of $\pr_2$, are simple to understand. The fibers of $\pr_1$ are the objects we will be interested in. Although a direct analysis of these fibers appears difficult, we are able to obtain their point counts when $\tl V$ is of rectangular type, thanks to a transitive group action. The content of this paragraph is made precise in a counting setting, as follows:

\begin{lemma}\label{lem:tot-real}
	Assume the notation above, with $k=\Fq$, $l=\Fqsq$, and $\tl V=\tl A^n$. For partitions $\mu,\lambda\subeq (m^n)$,
	\begin{enumerate}[label={\textup{(\alph*)}},leftmargin=*,labelsep=0cm,align=left,itemsep=2pt]
		\item $\abs{\Gr^\TR_A(\lambda;\tl V)}=g^{(m^n)}_\lambda(q^2) \dfrac{a_\lambda(q^2)}{a_\lambda(q)}$;
		
		\item Every fiber of $\pr_2$ has cardinality $g^\lambda_\mu(q)$;
		
		\item Every fiber of $\pr_1$ has the same cardinality.
	\end{enumerate}
\end{lemma}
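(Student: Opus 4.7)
For parts (a) and (b), my plan is to use natural fibrations.

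For (a), I would consider the forgetful map $\Gr^{\TR}_A(\lambda;\tl V)\to \{U\subeq \tl V : \type_{\tl A}(U)=\lambda\}$ sending $W\mapsto \tl AW$. This is well-defined because, for totally real $W$ of $A$-type $\lambda$, the $\tl A$-hull $\tl AW=W\oplus\Theta W$ has $\tl A$-type equal to $\lambda$ (the observation preceding Lemma~\ref{lem:real-struct-count}). The base has cardinality $g^{(m^n)}_\lambda(q^2)$, since $\tl V\simeq \tl A^n$ has $\tl A$-type $(m^n)$ and the residue field of $\tl A$ is $\Fqsq$. The fibers are sets of real structures, each of cardinality $a_\lambda(q^2)/a_\lambda(q)$ by Lemma~\ref{lem:real-struct-count}. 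Multiplying yields (a).

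For (b), the fiber of $\pr_2$ over $W_2$ is the set of totally real $A$-submodules $W_1\subeq_A W_2$ of type $\mu$. By Remark~\ref{rmk:sub-real}, any $A$-submodule of a totally real module is automatically totally real, so this fiber coincides with $\{W_1\subeq_A W_2 : \type_A(W_1)=\mu\}$. Since $W_2$ has $A$-type $\lambda$ and $A$ has residue field $\Fq$, this cardinality equals $g^\lambda_\mu(q)$, proving (b).

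For (c), I would exhibit a transitive group action. Let $G:=\Aut_{\tl A}(\tl V)$. Since $G$ is $\tl A$-linear and therefore commutes with multiplication by $\Theta$, the $G$-action preserves totally real submodules. Hence $G$ acts on both $\Gr^{\TR}_A(\mu;\tl V)$ and $\Fl^{\TR}_A(\mu,\lambda;\tl V)$, and $\pr_1$ is $G$-equivariant. It therefore suffices to prove that $G$ acts transitively on $\Gr^{\TR}_A(\mu;\tl V)$, for then every fiber of $\pr_1$ is $G$-translated to every other and they all share a common cardinality. Given $W,W'\in \Gr^{\TR}_A(\mu;\tl V)$, the transitivity proof splits into two steps. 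In Step~1 I align the $\tl A$-hulls: $\tl AW$ and $\tl AW'$ are $\tl A$-submodules of the rectangular ambient $\tl V$ of common $\tl A$-type $\mu$, so Lemma~\ref{lem:transitive} (applied over $\tl A$) produces $g_0\in G$ with $g_0(\tl AW)=\tl AW'$; after replacing $W$ by $g_0 W$, I may assume $\tl AW=\tl AW'=:U$. In Step~2, $W$ and $W'$ become two real structures of $U$, so by Lemma~\ref{lem:real-struct-count} there is $h\in \Aut_{\tl A}(U)$ with $h(W)=W'$, and the remaining task is to extend $h$ to an element $\tilde h\in G$.

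The extension in Step~2 is where I expect the main technical point to lie. The idea is to use the ``master transitivity'' in its injection form, parallel to \cite[Lemma~5.1]{huangjiang2023torsionfree}: the two $\tl A$-linear injections $\iota,h:U\hookrightarrow \tl V$ (the plain inclusion and $h$ composed with inclusion) share the image $U$ and hence have isomorphic cokernels $\tl V/U$, so there exists $\tilde h\in G$ with $\tilde h\circ\iota = h$, i.e., $\tilde h|_U = h$. Alternatively, one may exploit the self-injectivity of the Gorenstein Artin local ring $\tl A$: any $\tl A$-linear map $U\to \tl V$ extends to $\tl V\to \tl V$, and the invertibility of the extension can be checked modulo the maximal ideal $(T)\subeq \tl A$ using a block upper-triangular decomposition of $h$ indexed by the distinct parts of $\mu$, via Nakayama. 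Either way, once the extension is obtained, part~(c) follows immediately by $G$-equivariance of $\pr_1$.
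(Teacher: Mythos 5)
Your proofs of (a) and (b) coincide with the paper's. For (c), the paper likewise argues by $G$-equivariance of $\pr_1$ together with transitivity of $G=\Aut_{\tl A}(\tl V)$ on $\Gr^\TR_A(\mu;\tl V)$, deriving the latter from Lemma~\ref{lem:transitive} (aligning the $\tl A$-hulls) and Lemma~\ref{lem:real-struct-count} (moving one real structure of the common hull $U$ to the other) --- exactly the two-step scheme you describe. What you add, and what the paper leaves tacit, is the extension step: the automorphism $h\in\Aut_{\tl A}(U)$ produced by Lemma~\ref{lem:real-struct-count} is only defined on $U$, and one must explain why it is the restriction of some $\tilde h\in G$. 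Your primary justification --- that any two $\tl A$-linear injections of a finite-length module into the free module $\tl V=\tl A^n$ are conjugate under $\Aut_{\tl A}(\tl V)$, as the Matlis dual of the surjection statement \cite[Lemma~5.1]{huangjiang2023torsionfree} already invoked in Lemma~\ref{lem:transitive} --- is sound: $\tl A$ is Artinian Gorenstein local, so $\Hom_{\tl A}(-,\tl A)$ is an exact duality on finite-length $\tl A$-modules carrying $\tl A^n$ to $\tl A^n$ and exchanging injections with surjections, and this reduces your claim to the cited surjection lemma. Your alternative Nakayama route is looser as stated --- an arbitrary extension of $h$ to $\tl V$ supplied by injectivity need not be invertible (e.g.\ it could annihilate a complement of $U$), so the block-triangular reduction modulo $T$ would require a careful choice of extension --- but since it is offered only as a backup, the proof stands on the first route.
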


\begin{proof}
	\hspace*{0pt}
	\begin{enumerate}[label={\textup{(\alph*)}},leftmargin=*,labelsep=0cm,align=left,itemsep=2pt]
		\item To pick $W\in \Gr^\TR_A(\lambda;\tl V)$, we first choose $\tl W:=\tl AW\simeq \tl A\otimes_A W$, which is an $\tl A$-submodule of $\tl V$ of type $\lambda$. Since the residue field of $\tl A$ is $\Fqsq$, there are $g^{(m^n)}_\lambda(q^2)$ choices for $\tl W$. After choosing $\tl W$, we need to pick $W\in \Gr^\TR_A(\lambda;\tl W)$, which has $a_\lambda(q^2)/a_\lambda(q)$ possibilities by Lemma~\ref{lem:real-struct-count}. This finishes the proof of part~(a).
		
		\item This claim simply follows from Lemma~\ref{rmk:sub-real}.
		
		\item We note that $\pr_1$ is equivariant under the natural action of $G=\Aut_{\tl A}(\tl V)$. 
		By Lemma~\ref{lem:real-struct-count} and Lemma~\ref{lem:transitive}, $G$ acts transitively on the target of $\pr_1$, namely, $\Gr^\TR_A(\mu;\tl V)$. It follows that any two fibers of $\pr_1$ are related by a bijection induced by some $g\in G$. \qedhere
	\end{enumerate}
\end{proof}

\begin{theorem}
	Let $k=\Fq$, $l=\Fqsq$, and $\tl V$ be an $l[[T]]$-module of type $(m^n)$. Then for partitions $\mu,\lambda\subeq (m^n)$, every totally real $k[[T]]$-submodules of $\tl V$ of type $\mu$ is contained in $B(m,n,\lambda,\mu,q)$ totally real $k[[T]]$-submodules of $\tl V$ of type $\lambda$, where
	\begin{equation}
		B(m,n,\lambda,\mu,q)=\frac{g^{(m^n)}_\lambda(q^2) g^\lambda_\mu(q) a_\lambda(q^2)/a_\lambda(q)}{g^{(m^n)}_\mu(q^2) a_\mu(q^2)/a_\mu(q)}.
	\end{equation}
\end{theorem}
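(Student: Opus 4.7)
The theorem is an immediate accounting consequence of Lemma~\ref{lem:tot-real}, so the plan is to just run a double-count on the totally real flag variety $\Fl^{\TR}_A(\mu,\lambda;\tl V)$ using its two forgetful projections $\pr_1$ and $\pr_2$. The quantity $B(m,n,\lambda,\mu,q)$ to be computed is precisely the cardinality of a fiber of $\pr_1$ over a chosen totally real submodule $W_1$ of type $\mu$, since such a fiber consists exactly of those $W_2$ of type $\lambda$ that contain $W_1$ and are themselves totally real.

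The first step is to invoke Lemma~\ref{lem:tot-real}(c), which tells us that every fiber of $\pr_1$ has the same cardinality. This is the genuinely nontrivial input: it comes from the fact that $\Aut_{\tl A}(\tl V)$ acts transitively on $\Gr^{\TR}_A(\mu;\tl V)$, which in turn rests on the rectangular-type hypothesis (via Lemma~\ref{lem:transitive}) together with Lemma~\ref{lem:real-struct-count}. Granted this, the common fiber size must equal the averaged ratio
\begin{equation*}
	B(m,n,\lambda,\mu,q) \;=\; \frac{\abs{\Fl^{\TR}_A(\mu,\lambda;\tl V)}}{\abs{\Gr^{\TR}_A(\mu;\tl V)}}.
\end{equation*}

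The second step is to compute the numerator by using $\pr_2$. Lemma~\ref{lem:tot-real}(b) says every fiber of $\pr_2$ over a submodule $W_2$ of type $\lambda$ has size $g^\lambda_\mu(q)$ (its fibers consist of $A$-submodules of $W_2$ of type $\mu$, which are automatically totally real by Remark~\ref{rmk:sub-real}). Hence
\begin{equation*}
	\abs{\Fl^{\TR}_A(\mu,\lambda;\tl V)} \;=\; \abs{\Gr^{\TR}_A(\lambda;\tl V)} \cdot g^{\lambda}_{\mu}(q).
\end{equation*}

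Finally, plugging in the formula from Lemma~\ref{lem:tot-real}(a) for the cardinality of each totally real Grassmannian yields
\begin{equation*}
	B(m,n,\lambda,\mu,q) \;=\; \frac{g^{(m^n)}_\lambda(q^2)\,\frac{a_\lambda(q^2)}{a_\lambda(q)}\, g^\lambda_\mu(q)}{g^{(m^n)}_\mu(q^2)\,\frac{a_\mu(q^2)}{a_\mu(q)}},
\end{equation*}
which is the claimed expression. The only ``hard part'' was secured earlier in Lemma~\ref{lem:tot-real}(c) via the transitive group action on real structures; once that symmetry is available, the theorem follows from a two-projection orbit-counting argument with no further computation.
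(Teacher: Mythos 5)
Your proof is correct and is essentially identical to the paper's: both compute the fiber of $\pr_1$ as $\abs{\Fl^{\TR}_A(\mu,\lambda;\tl V)}/\abs{\Gr^{\TR}_A(\mu;\tl V)}$ via Lemma~\ref{lem:tot-real}(c), evaluate the numerator with Lemma~\ref{lem:tot-real}(b), and substitute the Grassmannian counts from Lemma~\ref{lem:tot-real}(a).
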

\begin{proof}
	What we need is the fiber cardinality of $\pr_1$. By Lemma~\ref{lem:tot-real}(c), this is
	\begin{align*}
		\frac{\abs{\Fl^\TR_A(\mu,\lambda;\tl V)}}{\abs{\Gr^\TR_A(\mu;\tl V)}} \overset{\text{Lem~\ref{lem:tot-real}(b)}}{=} \frac{\abs{\Gr^\TR_A(\lambda;\tl V)}g^\lambda_\mu(q)}{\abs{\Gr^\TR_A(\mu;\tl V)}} \overset{\text{Lem~\ref{lem:tot-real}(a)}}{=} \frac{g^{(m^n)}_\lambda(q^2) g^\lambda_\mu(q) a_\lambda(q^2)/a_\lambda(q)}{g^{(m^n)}_\mu(q^2) a_\mu(q^2)/a_\mu(q)},
	\end{align*}
	as desired.
\end{proof}

\begin{remark}
	We have an elementary simplification
	\begin{align}
		&B(m,n,\lambda,\mu,q)\notag\\
		&\quad=q^{\sum_i (2n-\lambda_i')(\lambda_i'-\mu_i')}(-q^{-1};q^{-1})_{\lambda_1'-\mu_1'}\qbinom{n-\mu_1'}{n-\lambda_1'}_{q^{-2}}\prod_{i\geq 1}\qbinom{\lambda_i'-\mu_{i+1}'}{\lambda_i'-\lambda_{i+1}'}_{q^{-1}},
	\end{align}
	which shows that $B(m,n,\lambda,\mu,q)$ is in $\N[q]$.
\end{remark}

\begin{remark}
	It is somewhat uncertain whether one can obtain \emph{any} information about the fibers of $\pr_1$. A natural attempt to find the size of $\pr_1^{-1}(W_1)$ would be to first fix an $\tl A$-submodule $\tl W_2$ of type $\lambda$ containing $\tl W_1:=\tl AW_1$, and then count the number of real structures $W_2$ of $\tl W_2$ that contain $W_1$. This count could potentially depend on the full structure of the \emph{flag} $\tl W_1\subeq \tl W_2$, not just on the type and cotype of $\tl W_1$ in $\tl W_2$. However, it is known that no elementary theory exists for the classification of $2$-step flags of DVR modules \cite{taitslin70}, making it unclear whether this refined counting problem is solvable even in principle. We propose the problem of seeking a more structural understanding of the fibers of $\pr_1$, including their geometry in the case $k=\R$ and $l=\C$.
\end{remark}

\subsection{Co-totally-real submodules}

This subsection is dual to the previous one. The development will be parallel and more concise.

As before, let $l/k$ be any quadratic field extension, and
\begin{align*}
	A=k[[T]]/T^m\subeq \tl A=l[[T]]/T^m.
\end{align*}
Let $\tl V$ be a finite-length $\tl A$-module. An $A$-submodule $W\subeq_A \tl V$ is \emph{spanning}, or \emph{co-totally-real} (cTR), if $\tl AW=\tl V$. Given a cTR submodule $W$ of $\tl V$, letting $\ut W=W\cap \Theta W$ be the largest $\tl A$-submodule contained in $W$, then $W/\ut W$ is a real structure of $\tl V/\ut W$. Consequently, the $\tl A$-cotype of $\ut W$ in $\tl V$ is the same as the $A$-cotype of $W$ in $\tl V$.

For a partition $\lambda$, define the \emph{cTR Grassmannian of cotype $\lambda$} by
\begin{equation*}
	\Gr^{\cTR}_A(\tl V;\lambda) := \big\{W\subeq_A \tl V \;:\; \text{$W$ cTR and $\type_A(\tl A/W)=\lambda$}\big\}.
\end{equation*}
For partitions $\lambda$ and $\mu$, define the \emph{cTR flag variety}
\begin{align*}
	\Fl^\cTR_A(\tl V;\lambda;\mu) := \big\{(W_2,W_1) \;: &\;\;\text{$W_2\subeq_A W_1\subeq_A \tl V$ with $W_1,W_2$ cTR},\\
	&\;\;\text{and $\type_A(\tl V/W_2)=\lambda$, $\type_A(\tl V/W_1)=\mu$}\big\}.
\end{align*}
Consider forgetful maps
\[\begin{tikzcd}[sep=scriptsize]
	& \Fl^{\cTR}_A(\tl V;\lambda;\mu) \\
	\Gr^{\cTR}_A(\tl V;\mu) && \Gr^{\cTR}_A(\tl V;\lambda)
	\arrow["{\pr_1}"', from=1-2, to=2-1]
	\arrow["{\pr_2}", from=1-2, to=2-3]
\end{tikzcd}.\]

\begin{lemma}\label{lem:co-tot-real}
	Assume the notation above, with $k=\Fq$, $l=\Fqsq$, and $\tl V=\tl A^n$. For partitions $\mu,\lambda\subeq (m^n)$,
	\begin{enumerate}[label={\textup{(\alph*)}},leftmargin=*,labelsep=0cm,align=left,itemsep=2pt]
		\item $\abs{\Gr^\cTR_A(\tl V;\lambda)}=g^{(m^n)}_\lambda(q^2) \dfrac{a_\lambda(q^2)}{a_\lambda(q)}$;
		
		\item Every fiber of $\pr_2$ has cardinality $g^\lambda_\mu(q)$;
		
		\item Every fiber of $\pr_1$ has the same cardinality.
	\end{enumerate}
\end{lemma}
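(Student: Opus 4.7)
The plan is to mirror the proof of Lemma~\ref{lem:tot-real}, exploiting the duality that a cTR submodule $W\subseteq \tl V$ of cotype $\lambda$ is parametrized by the pair $(\ut W, W/\ut W)$ via the bijection introduced at the start of the subsection: $\ut W=W\cap \Theta W$ is an $\tl A$-submodule of $\tl V$ whose $\tl A$-cotype equals the $A$-cotype $\lambda$ of $W$, and $W/\ut W$ is a real structure of $\tl V/\ut W$. With this correspondence in hand, each part of the lemma reduces to a short computation parallel to its TR counterpart.

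For part~(a), I would count $\ut W$ and $W/\ut W$ separately. The first factor, the number of $\tl A$-submodules of $\tl V=\tl A^n$ of $\tl A$-cotype $\lambda$, equals $g^{(m^n)}_\lambda(q^2)$ by Lemma~\ref{lem:rectangular} together with the symmetry $g^\lambda_{\mu\nu}=g^\lambda_{\nu\mu}$ (the residue field of $\tl A$ having size $q^2$); the second factor is $a_\lambda(q^2)/a_\lambda(q)$ by Lemma~\ref{lem:real-struct-count}, giving the claimed product. For part~(b), the key observation is that any $A$-submodule $W_1$ with $W_2\subseteq W_1\subseteq \tl V$ automatically satisfies $\tl A W_1\supseteq \tl A W_2=\tl V$, hence is already cTR. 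The count therefore reduces to the number of $A$-submodules of the $A$-module $\tl V/W_2$ (which has $A$-type $\lambda$) of cotype $\mu$, which is $g^\lambda_\mu(q)$.

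For part~(c), I would use that $\pr_1$ is $\Aut_{\tl A}(\tl V)$-equivariant, reducing the claim to the transitivity of $\Aut_{\tl A}(\tl V)$ on $\Gr^\cTR_A(\tl V;\mu)$. This transitivity is the main obstacle, and my plan is a three-step alignment. Given $W,W'\in \Gr^\cTR_A(\tl V;\mu)$, I would first apply Lemma~\ref{lem:transitive} to $\ut W$ and $\ut{W'}$ (both of $\tl A$-type $(m^n)-\mu$ by Lemma~\ref{lem:rectangular}) to find $g_1\in \Aut_{\tl A}(\tl V)$ with $g_1\ut W=\ut{W'}$. Next, Lemma~\ref{lem:real-struct-count} yields an $\tl A$-automorphism $h$ of $\tl V/\ut{W'}$ carrying $g_1 W/\ut{W'}$ to $W'/\ut{W'}$. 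The last and subtlest step is to lift $h$ to some $g_2\in \Aut_{\tl A}(\tl V)$ preserving $\ut{W'}$, which I would accomplish by invoking \cite[Lemma~5.1]{huangjiang2023torsionfree} on the two surjections $\pi$ and $h\circ \pi$ from the free $\tl A$-module $\tl V$ onto $\tl V/\ut{W'}$; the composition $g_2g_1$ then sends $W$ to $W'$.
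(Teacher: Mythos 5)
Your proposal is correct and follows essentially the same route as the paper's proof: parametrize a cTR submodule $W$ of cotype $\lambda$ by the pair $(\ut W, W/\ut W)$, use the Hall polynomial and Lemma~\ref{lem:real-struct-count} for the count in (a), observe that (b) reduces to counting submodules of $\tl V/W_2 \simeq M_A(\lambda)$ of cotype $\mu$, and deduce (c) from equivariance plus transitivity on $\Gr^\cTR_A(\tl V;\mu)$. Your only substantive addition is in part (c), where you spell out the three-step alignment and the lifting of $h$ via \cite[Lemma~5.1]{huangjiang2023torsionfree}; the paper compresses this transitivity into a single sentence citing the same two lemmas, leaving the lift implicit, so your version supplies a useful detail rather than a different argument. (Incidentally, your formulation of the observation in (b) --- that containing a cTR submodule forces cTR --- is stated in the correct logical direction, cleaning up what reads as a reversed implication in the paper's phrasing.)
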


\begin{proof}
	\hspace*{0pt}
	\begin{enumerate}[label={\textup{(\alph*)}},leftmargin=*,labelsep=0cm,align=left,itemsep=2pt]
		\item To pick $W\in \Gr^\cTR_A(\tl V;\lambda)$, we first choose $\ut W:=W\cap\Theta W$, which is an $\tl A$-submodule of $\tl V$ of cotype $\lambda$; there are $g^{(m^n)}_\lambda(q^2)$ choices. After we choose $\tl W$, picking $W$ amounts to picking a real structure of $\tl V/\ut W$, which has $a_\lambda(q^2)/a_\lambda(q)$ choices by Lemma~\ref{lem:real-struct-count}. This finishes the proof of part~(a).
		\item This follows from the simple observation that for $A$-submodules $W'\subeq W\subeq \tl V$, if $W$ is cTR, then so is $W'$.
		\item We note that $\pr_1$ is equivariant under the natural action of $G=\Aut_{\tl A}(\tl V)$. Since a cTR submodule $W$ of cotype $\mu$ is determined by an $\tl A$-submodule $\ut W$ of cotype $\mu$ and a real structure of $\tl V/\ut W$, it follows from Lemma~\ref{lem:transitive} and Lemma~\ref{lem:real-struct-count} that $G$ acts on $\Gr^\cTR_A(\tl V;\mu)$ transitively. The desired conclusion follows. \qedhere
	\end{enumerate}
\end{proof}

\begin{theorem}\label{thm:cTR-count}
	Let $k=\Fq$, $l=\Fqsq$, and $\tl V$ be an $l[[T]]$-module of type $(m^n)$. Then for partitions $\mu,\lambda\subeq (m^n)$, every cTR $k[[T]]$-submodules of $\tl V$ of cotype $\mu$ contains $B(m,n,\lambda,\mu,q)$ cTR $k[[T]]$-submodules of $\tl V$ of cotype $\lambda$, where
	\begin{equation}\label{eq:B-formula}
		B(m,n,\lambda,\mu,q)=\frac{g^{(m^n)}_\lambda(q^2) g^\lambda_\mu(q) a_\lambda(q^2)/a_\lambda(q)}{g^{(m^n)}_\mu(q^2) a_\mu(q^2)/a_\mu(q)}.
	\end{equation}
\end{theorem}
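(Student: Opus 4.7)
The statement to establish is a fiber cardinality for the forgetful map $\pr_1\colon \Fl^\cTR_A(\tl V;\lambda;\mu)\to \Gr^\cTR_A(\tl V;\mu)$: each fiber has size $B(m,n,\lambda,\mu,q)$. The plan is to mirror verbatim the proof of the totally-real analogue, using the three parts of Lemma~\ref{lem:co-tot-real} that have just been proved.

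First, I would count the total cardinality of the cTR flag variety by fibering along the other projection $\pr_2$. By Lemma~\ref{lem:co-tot-real}(b), every $\pr_2$-fiber has cardinality $g^\lambda_\mu(q)$ (the key point being that any $A$-submodule $W_1$ with $W_2\subeq W_1\subeq \tl V$ is automatically cTR once $W_2$ is). Therefore
\[
\abs{\Fl^\cTR_A(\tl V;\lambda;\mu)}=\abs{\Gr^\cTR_A(\tl V;\lambda)}\cdot g^\lambda_\mu(q).
\]
Second, I invoke Lemma~\ref{lem:co-tot-real}(c), which says that all fibers of $\pr_1$ have the same size. Combining with part~(a) for the cardinalities of the two Grassmannians,
\[
\abs{\pr_1^{-1}(W_1)}=\frac{\abs{\Fl^\cTR_A(\tl V;\lambda;\mu)}}{\abs{\Gr^\cTR_A(\tl V;\mu)}}=\frac{g^{(m^n)}_\lambda(q^2)\,g^\lambda_\mu(q)\,a_\lambda(q^2)/a_\lambda(q)}{g^{(m^n)}_\mu(q^2)\,a_\mu(q^2)/a_\mu(q)},
\]
which is the claimed $B(m,n,\lambda,\mu,q)$.

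The genuinely substantial step — proving that $\pr_1$ has equi-cardinal fibers, despite the known absence of an elementary classification for two-step flags of DVR modules (as the authors flag in the earlier remark) — has been extracted into Lemma~\ref{lem:co-tot-real}(c), and was handled there by exhibiting a transitive $\Aut_{\tl A}(\tl V)$-action on $\Gr^\cTR_A(\tl V;\mu)$ via the decomposition $W_1=(W_1\cap \Theta W_1)+ (\text{real structure of }\tl V/(W_1\cap\Theta W_1))$, combining Lemma~\ref{lem:transitive} with Lemma~\ref{lem:real-struct-count}. With that obstacle already cleared, the remaining task is a one-line orbit-counting division, so I do not anticipate any further difficulty beyond the routine simplification to the stated form of $B$.
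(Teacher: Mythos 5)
Your proof is correct and follows exactly the paper's own argument: the paper proves \texttt{thm:cTR-count} by combining the three parts of Lemma~\ref{lem:co-tot-real} — part~(c) for equi-cardinality of $\pr_1$-fibers, part~(b) to compute $\abs{\Fl^\cTR_A(\tl V;\lambda;\mu)}$ via $\pr_2$, and part~(a) for the Grassmannian sizes — in the same division-by-orbit-counting argument you wrote out. Your identification of Lemma~\ref{lem:co-tot-real}(c) (transitivity of $\Aut_{\tl A}(\tl V)$ via Lemmas~\ref{lem:transitive} and~\ref{lem:real-struct-count}) as the only nontrivial input also matches the paper's structure.
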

\begin{proof}
	This follows directly from Lemma~\ref{lem:co-tot-real}.
\end{proof}

\subsection{Remarks on $\nu^R_{\tl R^n}(s)$}

We use the techniques here to derive alternative formulas for some results in \cite{huang2025coh}.

\begin{proposition}[{cf.~\cite[Theorem~3.2(c)]{huang2025coh}}]
	For $R=R_{2,2m}'$, with $t=q^{-s}$, we have
	\begin{equation*}
		\nu^R_{\tl R^n}(s)=\sum_{\lambda} g^{(m^n)}_\lambda(q^2) \frac{a_\lambda(q^2)}{a_\lambda(q)} t^{\abs{\lambda}}.
	\end{equation*}
	\label{prop:rtilde}
\end{proposition}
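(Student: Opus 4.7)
The plan is to apply Theorem~\ref{thm:rationality-arithmetic} with $M=\tl R^n$ and the simplest valid choice $\ut M=M$ (legitimate since $M$ is itself an $\tl R$-lattice). For any $L\subeq_R M$ one has $\tl RL\subeq \tl RM=M$, so the condition $\tl L\supeq M$ collapses to $\tl L=M=\tl R^n$; simultaneously $\rk_1(\tl L/\ut M)=0$, making the Pochhammer factor in \eqref{eq:rational-formula-arithmetic} trivial. The rationality formula then reduces to
\begin{equation*}
\nu^R_{\tl R^n}(s)=\sum_L (\tl R^n:L)^{-s},
\end{equation*}
where $L$ ranges over cTR $R$-submodules of $\tl R^n$, namely those with $\tl RL=\tl R^n$.

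The critical structural step will be to verify that every cTR $R$-submodule $L$ of $\tl R^n$ automatically contains $\fc\tl R^n=T^m\tl R^n$. I will argue this via Nakayama: since $\tl RL=\tl R^n$, one selects $u_1,\ldots,u_n\in L$ whose images form an $l$-basis of $l^n=\tl R^n/T\tl R^n$, so that $\sum_i\tl Ru_i=\tl R^n$. For any $x\in T^m\tl R^n$, writing $x=T^m\sum_i c_iu_i$ with $c_i\in \tl R$ and noting that each $T^mc_i\in T^m\tl R=\fc\subeq R$ by the very definition of the conductor, one concludes $x=\sum_i(T^mc_i)u_i\in \sum_i Ru_i\subeq L$. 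This is the only nontrivial step and is what keeps the sum above finite in each fixed power of $t$; without it, the $R$-submodules of $\tl R^n$ of a given index would not be confined to the finite setting mod $\fc\tl R^n$.

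With this containment in hand, quotienting by $\fc\tl R^n$ yields a bijection between cTR $R$-submodules of $\tl R^n$ and cTR $A$-submodules $\bar L$ of $\tl V=\tl R^n/\fc\tl R^n=\tl A^n$, under which $(\tl R^n:L)=(\tl V:\bar L)=q^{\abs{\lambda}}$ for $\lambda=\type_A(\tl V/\bar L)$. Grouping the sum by this cotype $\lambda$ and invoking Lemma~\ref{lem:co-tot-real}(a) to supply the count $\abs{\Gr^\cTR_A(\tl V;\lambda)}=g^{(m^n)}_\lambda(q^2)\,a_\lambda(q^2)/a_\lambda(q)$ will give the claimed formula. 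All remaining steps amount to routine bookkeeping using material already developed earlier in this section.
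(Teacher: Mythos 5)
Your argument is correct and leads to the stated formula, but it is not the route the paper takes. The paper's proof is a two-step citation: it invokes \cite[Lemma~2.5]{huang2025coh} for exactly the intermediate identity $\nu^R_{\tl R^n}(s)=\sum_\lambda\abs{\Gr_A^\cTR(\tl V;\lambda)}\,t^{\abs{\lambda}}$ and then applies Lemma~\ref{lem:co-tot-real}(a) to the cTR count. You instead re-derive the intermediate identity internally, by applying Theorem~\ref{thm:rationality-arithmetic} with the admissible (but non-obvious) choice $\ut M=M=\tl R^n$, which forces $\tl L=M$ and trivializes the Pochhammer factor; the Nakayama-plus-conductor argument you give for $\fc\tl R^n\subeq L$ whenever $\tl RL=\tl R^n$ is the same containment the paper asserts without proof in the discussion of the poset $E_R(\tl L)$ (``since every element of $E_R(\tl L)$ contains $\fc\tl L$\dots''), so you are filling in a detail the paper treats as known. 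The payoff of your version is that the proposition becomes fully self-contained within this paper's apparatus, at the cost of redoing work that the cited lemma packages once for several uses; the paper's version is shorter but leans on the companion paper. Both then finish identically with Lemma~\ref{lem:co-tot-real}(a).
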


\begin{proof}
	By \cite[Lemma~2.5]{huang2025coh}, we have
	\begin{equation*}
		\nu^R_{\tl R^n}(s)=\sum_{\lambda} \abs{\Gr_A^\cTR(\tl V;\lambda)}t^{\abs{\lambda}}.
	\end{equation*}
	The required formula then follows from Lemma~\ref{lem:co-tot-real}(a).
\end{proof}

\begin{remark}
	The result of \cite[Theorem~3.2(c)]{huang2025coh} is an $(m+1)$-fold multisum in indices $r$ and $(s_1,\dots,s_m)=:\lambda'$ (in the notation there), so the alternative formula here provides an ``$(m+1)\text{-fold} = m\text{-fold}$'' identity. One may ask what the nature of this identity is from a $q$-theoretic perspective. It turns out that with the indices $s_1,\dots,s_m$ fixed, the inner sum over $r$ simplifies to a one-term expression by the second $q$-Chu--Vandermonde sum, namely,
	\begin{align*}
		\sum_{r\ge 0} \frac{(-1)^r q^{r^2 + (1-2s_1)r} (q;q)_{2n-2r}}{(q^2;q^2)_r (q^2;q^2)_{n-r} (q;q)_{(n+s_1)-2r}} = \frac{(q;q)_{n-s_1}}{(q;q)_{n+s_1} (q^2;q^2)_{-s_1}}.
	\end{align*}
	Therefore, this ``$(m+1)\text{-fold} = m\text{-fold}$'' identity is essentially the above ``$1\text{-fold} = 0\text{-fold}$'' equality. This is not a coincidence: a closer look at the proof of \cite[Theorem~3.2(c)]{huang2025coh} will reveal that the sum over $r$ with a fixed $\lambda'$ essentially computes $\abs{\Gr_A^\cTR(\tl V;\lambda)}$ using the M\"obius inversion technique there, while Lemma~\ref{lem:co-tot-real}(a) directly gives the one-term expression it simplifies to. 
\end{remark}

Let us use Proposition~\ref{prop:rtilde} to give a simple, independent verification of the $s=0$ specialization of Theorem~\ref{thm:main}.

\begin{proposition}
	For $R=R_{2,2m}'$, with $t=q^{-s}$, we have
	\begin{equation*}
		\zetahat_{R,n}(0)=\frac{1}{(q^{-1};q^{-1})_n}\Br^{(2m+2)}_n(1,q^{-1}).
	\end{equation*}
\end{proposition}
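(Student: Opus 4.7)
Plan. The starting point is $\zetahat_{R,n}(0) = \zeta^R_{R^n}(n) = \nu^R_{R^n}(n)/(q^{-2};q^{-2})_n$, using the Solomon evaluation \eqref{eq:solomon-formula} of $\zeta^{\tl R}_{\tl R^n}(n)$. Apply Theorem~\ref{thm:rationality-arithmetic} to $M = R^n$ with $\ut M = T^m \tl R^n$: by Lemma~\ref{lem:boundary-lattice-invariants}(a), every $R$-submodule $L_b \supseteq \ut M$ is a boundary lattice, and such $L_b$'s are in bijection with $A$-submodules $W_b \subseteq A^n = M/\ut M$. Writing $\mu := \type_A(A^n/W_b)$, Lemma~\ref{lem:boundary-lattice-invariants}(c) gives $\rk_1(\tl R L_b/\ut M) = n - \mu_m'$, so at $s = n$ the Pochhammer weight $(q^{-2n}; q^2)_{n-\mu_m'}$ telescopes to $(q^{-2};q^{-2})_n/(q^{-2};q^{-2})_{\mu_m'}$; after dividing out the overall $(q^{-2};q^{-2})_n$, this leaves the clean factor $(q^{-2};q^{-2})_{\mu_m'}^{-1}$.

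Next, evaluate the inner sum $\sum_{L \in E_R(\tl R L_b; L_b)} |L_b/L|^{-n}$ via the cTR framework of Subsection~3.4. Under the canonical identification $E_R(\tl L) \cong E_A(\tl V')$ with $\tl V' := \tl L/\fc\tl L \cong \tl A^n$ (arising from freeness of $\tl L$ over $\tl R$), this inner sum becomes a sum over cTR submodules $W \subseteq \tl V'$ contained in $W_b' := L_b/\fc\tl L$. Parallel to Lemma~\ref{lem:boundary-lattice-invariants}(b), $W_b'$ has cotype $(m^n) - \mu$ in $\tl V'$; hence Theorem~\ref{thm:cTR-count} gives the count of such $W$'s of any prescribed cotype $\lambda$ as $B(m, n, \lambda, (m^n) - \mu, q)$, with $|L_b/L| = |W_b'/W| = q^{|\lambda|+|\mu|-mn}$.

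Combining, $\zeta^R_{R^n}(n)$ becomes an explicit double partition sum over $\mu, \lambda \subseteq (m^n)$, where the outer enumeration of $W_b$ by cotype $\mu$ contributes a Hall factor $g^{(m^n)}_\mu(q)$, and the inner count is given by~\eqref{eq:B-formula}. Using the identity $(q^{-2};q^{-2})_k = (q^{-1};q^{-1})_k(-q^{-1};q^{-1})_k$ throughout to unify the Pochhammer bases, the inner $\lambda$-summation should collapse in closed form---in the same spirit as the $q$-Chu--Vandermonde reduction highlighted in the remark preceding this proposition---leaving a single sum over $\mu$. Reindexing $\mu$ by its conjugate via $n_i := \mu_{m+1-i}'$ and setting $(q^{-2};q^{-2})_{n_1} = (q^{-1};q^{-1})_{n_1}(-q^{-1};q^{-1})_{n_1}$ then identifies this sum with the expansion of $\frac{1}{(q^{-1};q^{-1})_n}\Br_n^{(2m+2)}(1, q^{-1})$.

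The main obstacle is the inner $\lambda$-summation: one needs a clean cancellation between the Hall polynomial $g^\lambda_{(m^n)-\mu}(q)$, the automorphism ratio $a_\lambda(q^2)/a_\lambda(q)$, and the weight $q^{-n(|\lambda|+|\mu|-mn)}$ from $|W_b'/W|^{-n}$. This is likely to require a targeted $q$-hypergeometric summation---most plausibly a Chu--Vandermonde variant adapted to Hall-polynomial settings---to reduce the double sum to a single sum matching Bressoud's form.
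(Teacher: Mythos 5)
Your proposal outlines a direct computation of $\zeta^R_{R^n}(n)$ by running the full boundary-lattice machinery (Theorem~\ref{thm:rationality-arithmetic}, Lemma~\ref{lem:boundary-lattice-invariants}, Theorem~\ref{thm:cTR-count}), which lands on a double partition sum over $\mu,\lambda\subseteq (m^n)$ — essentially the $s=0$ specialization of Theorem~\ref{thm:first-explicit} and Corollary~\ref{coro:zeta-new}. The structure of that derivation is fine; the problem is what comes after.

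The gap is the ``inner $\lambda$-summation should collapse in closed form'' step, which you defer to ``a Chu--Vandermonde variant adapted to Hall-polynomial settings.'' This is not a minor technicality you can wave at: collapsing the $2m$-fold multisum of Corollary~\ref{coro:zeta-new} to the $m$-fold Bressoud form is, at $t=1$, precisely the assertion of Theorem~\ref{thm:main} at $s=0$, and the paper spends all of Section~\ref{sec:a-indep} (Theorem~\ref{th:X-a-indep} and its consequences) establishing exactly this reduction. No single $q$-Chu--Vandermonde application will do it. As written, your proof would be circular: the whole point of this proposition is to supply an \emph{independent} check of the $s=0$ case without invoking the main theorem.

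The paper's actual proof takes a different, much shorter route that sidesteps the double sum entirely. It uses the identity $\nuhat_{R,n}(0)=q^{-mn^2}\nu^R_{\tl R^n}(0)$ from \cite[eq.~(2.18)]{huang2025coh} (a manifestation of the general fact that $\nu^R_{\tl R^n}$ and $\nu^R_{R^n}$ agree at the central point after normalization), combined with the single-sum formula $\nu^R_{\tl R^n}(s)=\sum_\lambda g^{(m^n)}_\lambda(q^2)\,\frac{a_\lambda(q^2)}{a_\lambda(q)}\,t^{\abs{\lambda}}$ from Proposition~\ref{prop:rtilde}. Because $\nu^R_{\tl R^n}$ only requires enumerating cTR submodules by cotype (one sum, not two), the result is already a single sum over a partition $\lambda\subseteq(m^n)$; the substitution $n_i=n-\lambda'_{m+1-i}$, together with $\zetahat_{R,n}(s)=(t^2q^{-2};q^{-2})_n^{-1}\nuhat_{R,n}(s)$ evaluated at $t=1$ and the factorization $(q^{-2};q^{-2})_k=(q^{-1};q^{-1})_k(-q^{-1};q^{-1})_k$, then matches the Bressoud sum term by term with no hypergeometric summation needed. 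If you want to salvage your approach, you would need to either import that $\nuhat\leftrightarrow\nu_{\tl R^n}$ shortcut, or genuinely prove the double-to-single collapse — which would make this proposition no easier than Theorem~\ref{thm:main} itself.
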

\begin{proof}
	Recalling that $\dim_{\Fq} \tl R/R=m$, the formula \cite[eq.~(2.18)]{huang2025coh} reads $\nuhat_{R,n}(0)=q^{-mn^2}\nu^R_{\tl R^n}(0)$. Directly expanding Proposition~\ref{prop:rtilde}, setting $n_i=n-\lambda'_{m+1-i}$, and using
	$$\zetahat_{R,n}(s)=\zetahat_{\tl R,n}(s)\nuhat_{R,n}(s)=(t^2q^{-2};q^{-2})_n^{-1}\nuhat_{R,n}(s),$$
	the result follows.
\end{proof}

\subsection{The Quot zeta function}

We now put everything together. 

\begin{lemma}\label{lem:general-purpose-counting}
	Assume $k=\Fq$, $l=\Fqsq$, and $R=R_{2,2m}'$. Let $L_b$ be an $R$-lattice, and let $\mu$ be the type of the $A$-module $\tl R L_b/L_b$. The generating function
	\begin{equation*}
		G_{L_b}(t) := \sum_{L \in E_R(\tl R L_b; L_b)} \;t^{\dim_k L_b/L}
	\end{equation*}
	is given by
	\begin{equation}
		G_{L_b}(t)=\sum_\lambda B(m,n,\lambda,\mu,q) t^{\abs{\lambda}-\abs{\mu}}.
	\end{equation}
\end{lemma}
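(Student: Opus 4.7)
The plan is to reinterpret the sum in terms of the cTR framework set up in the previous subsection and then invoke Theorem~\ref{thm:cTR-count}. The starting point is the observation (recalled at the beginning of this subsection) that since $\fc = (T^m)\tl R$ lies in $R$, every $L \in E_R(\tl L)$ with $\tl L := \tl R L_b$ automatically contains $\fc \tl L$; this yields a poset isomorphism $E_R(\tl L) \simeq E_A(\tl V)$ via $L \mapsto W := L/\fc \tl L$, where $\tl V := \tl L/\fc \tl L \simeq \tl A^n$. By definition, $E_A(\tl V)$ is precisely the set of cTR $A$-submodules of $\tl V$. Set $W_b := L_b/\fc \tl L$; by hypothesis, $W_b$ is cTR in $\tl V$ of cotype $\mu$.

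First I would verify that this isomorphism restricts to a bijection between $E_R(\tl L; L_b)$ and the set of cTR submodules $W \subseteq W_b$ of $\tl V$, and that $\dim_k L_b/L = \dim_k W_b/W$ under the correspondence (immediate from $L_b/L \simeq W_b/W$ via the third isomorphism theorem, using $\fc \tl L \subseteq L \subseteq L_b$). Next I would group the sum according to the cotype $\lambda$ of $W$ in $\tl V$, with $\lambda$ ranging over partitions satisfying $\mu \subseteq \lambda \subseteq (m^n)$. The exponent of $t$ then becomes $\dim_k W_b/W = \abs{\lambda} - \abs{\mu}$ by the elementary computation $\dim_k W = \dim_k \tl V - \abs{\lambda}$ and likewise for $W_b$. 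Finally, Theorem~\ref{thm:cTR-count} counts the cTR submodules $W \subseteq W_b$ of cotype $\lambda$ as $B(m,n,\lambda,\mu,q)$, and summing assembles the claimed formula.

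Since essentially all the nontrivial combinatorial content has already been isolated in Theorem~\ref{thm:cTR-count}, I do not anticipate a genuine obstacle. The only item meriting a careful check is the restricted bijection in the first step; it relies on the isomorphism $E_R(\tl L) \simeq E_A(\tl V)$ being order-preserving, which is transparent from the construction $L \mapsto L/\fc \tl L$.
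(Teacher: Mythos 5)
Your proposal is correct and follows essentially the same route as the paper: reduce modulo $\fc\tl RL_b$ to pass from $E_R(\tl RL_b;L_b)$ to $E_A(\tl V;W_b)$, identify the latter with cTR submodules of $\tl V$ contained in $W_b$, group by cotype $\lambda$, and invoke Theorem~\ref{thm:cTR-count}. The only addition you make is spelling out the exponent computation $\dim_k W_b/W=\abs{\lambda}-\abs{\mu}$ via $\dim_k W=\dim_k\tl V-\abs{\lambda}$, which the paper leaves implicit; this is a welcome clarification but not a different argument.
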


\begin{proof}
	Reducing modulo $\fc \tl RL_b$, we get
	\begin{equation*}
		G_{L_b}(t) = \sum_{W \in E_A(\tl V; W_b)} t^{\dim_k W_b/W},
	\end{equation*}
	where we denote $\tl V:=\tl RL_b/\fc \tl RL_b$ and $W_b:=\tl RL_b/L_b$. Note that $W_b\in E_A(\tl V)$, so $W_b$ is cTR in $\tl V$. By grouping the sum in terms of $\lambda:=\type_A(\tl V/W)$, Theorem~\ref{thm:cTR-count} gives the desired formula.
\end{proof}

We are ready for the first explicit formula for the Quot zeta function.
\begin{theorem}\label{thm:first-explicit}
	Assume $k=\Fq$, $l=\Fqsq$, and $R=R_{2,2m}'$. Then with $t:=q^{-s}$, we have
	\begin{equation}
		\nu^R_{R^n}(s)=(t^2;q^2)_{n} \zeta^R_{R^n}(s) = \sum_{\lambda,\mu} (t^2;q^2)_{\mu_1'} \,g^{(m^n)}_\mu(q) \,B(m,n,\lambda,\mu,q) \,t^{mn+\abs{\lambda}-2\abs{\mu}}.
	\end{equation}
\end{theorem}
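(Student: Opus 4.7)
The plan is to apply the rationality formula of Theorem~\ref{thm:rationality-arithmetic} with $M=R^n$ and $\ut M=\fc M=T^m \tl R^n$, and to translate the sum over $B_R(M;\ut M)$ into an iterated sum first over boundary lattices $L_b\in \partial_R(M;\ut M)$ and then over $L\in E_R(\tl R L_b; L_b)$. The prefactor $(t^2;q^2)_n$ on the left-hand side is immediate from Solomon's formula \eqref{eq:solomon-formula} applied to $\tl R$, which is a DVR whose residue field has $q_1=q^2$ elements, giving $\zeta^{\tl R}_{\tl R^n}(s)=(t^2;q^2)_n^{-1}$.

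For each boundary lattice $L_b$, let $\mu_0$ be the type of the $A$-module $M/L_b$ and $\mu$ the type of the $A$-module $\tl R L_b/L_b$. Lemma~\ref{lem:boundary-lattice-invariants}(b) gives $\mu=(m^n)-\mu_0$, from which $|\mu_0|=mn-|\mu|$ and $(\mu_0)_m'=n-\mu_1'$; combined with Lemma~\ref{lem:boundary-lattice-invariants}(c), the latter yields $\rk_1(\tl R L_b/\ut M)=\mu_1'$. So the outer weight attached to $L_b$ is
\[
(M:L_b)^{-s}(t^2;q^2)_{\rk_1(\tl R L_b/\ut M)} = t^{mn-|\mu|}\,(t^2;q^2)_{\mu_1'}.
\]
To count boundary lattices with a prescribed $\mu$, note that $L_b\mapsto W_b:=L_b/\ut M$ identifies $\partial_R(M;\ut M)$ with the set of $A$-submodules of $A^n$; the ones giving rise to a fixed $\mu$ are precisely those of cotype $\mu_0=(m^n)-\mu$, of which there are $g^{(m^n)}_\mu(q)$ by Lemma~\ref{lem:rectangular} and the bijection between type-$\mu$ and cotype-$((m^n)-\mu)$ submodules it provides.

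For each such $L_b$, the inner sum $\sum_{L\in E_R(\tl R L_b;L_b)}(L_b:L)^{-s}$ is exactly $G_{L_b}(t)=\sum_\lambda B(m,n,\lambda,\mu,q)\,t^{|\lambda|-|\mu|}$ computed in Lemma~\ref{lem:general-purpose-counting}. Assembling the pieces, the total $t$-exponent for each pair $(\lambda,\mu)$ is $(mn-|\mu|)+(|\lambda|-|\mu|)=mn+|\lambda|-2|\mu|$, yielding
\[
\nu^R_{R^n}(s) = \sum_{\lambda,\mu}(t^2;q^2)_{\mu_1'}\,g^{(m^n)}_\mu(q)\,B(m,n,\lambda,\mu,q)\,t^{mn+|\lambda|-2|\mu|}.
\]
The argument is really an assembly of results already in hand (Theorems~\ref{thm:rationality-arithmetic} and~\ref{thm:cTR-count}, and Lemmas~\ref{lem:boundary-lattice-invariants},~\ref{lem:rectangular},~\ref{lem:general-purpose-counting}), so the only genuine obstacle is the bookkeeping: one must consistently distinguish the type of $\tl R L_b/L_b$ from that of $M/L_b$ and apply the complementation $\mu_0\leftrightarrow (m^n)-\mu$ both in the exponent of $t$ (through $|\mu_0|=mn-|\mu|$ and $(\mu_0)_m'=n-\mu_1'$) and in the count of boundary lattices of a given $\mu$.
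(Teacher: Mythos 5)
Your proposal is correct and follows the paper's argument almost verbatim: apply Theorem~\ref{thm:rationality-arithmetic} with $\ut M=\fc M$, decompose the sum over $B_R(M;\ut M)$ into boundary lattices $L_b$ (grouped by the type $\mu$ of $\tl R L_b/L_b$, contributing $g^{(m^n)}_\mu(q)$ each with weight $t^{mn-\abs\mu}(t^2;q^2)_{\mu_1'}$) followed by the inner sum $G_{L_b}(t)$ from Lemma~\ref{lem:general-purpose-counting}. The extra explicit check $(\mu_0)_m'=n-\mu_1'$ you include is a small clarification of the bookkeeping that the paper states without detail, but the route is the same.
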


\begin{proof}
	By Lemma~\ref{lem:boundary-lattice-invariants}(a), the formula in Theorem~\ref{thm:rationality-arithmetic} can be rewritten as
	\begin{align*}
		\nu^R_{R^n}(s) &= \sum_{L_b: \ut M\subeq L_b\subeq_R M} (t^2;q^2)_{\rk_1(\tl R L_b/\ut M)} \sum_{L \in E_R(\tl R L_b; L_b)} t^{\dim_k M/L}\\
		&=\sum_{L_b: \ut M\subeq L_b\subeq_R M} (t^2;q^2)_{\rk_1(\tl R L_b/\ut M)} t^{\dim_k M/L_b} G_{L_b}(t).
	\end{align*}
	Let $(m^n)-\mu$ be the cotype of $L_b$ in $M$. Since the type of $M/\ut M$ is $(m^n)$, the number of choices for $L_b$ is $g^{(m^n)}_\mu(q)$. We have $\dim_k M/L_b=mn-\abs{\mu}$. By Lemma~\ref{lem:boundary-lattice-invariants}, the $\tl A$-rank of $\tl R L_b/\ut M$ is $\mu'_1$, and the type of $\tl RL_b/L_b$ is $\mu$, so $G_{L_b}(t)$ is directly given by Lemma~\ref{lem:general-purpose-counting}. Putting everything together yields the desired formula.
\end{proof}

\begin{corollary}\label{coro:zeta-new}
	We have
	\begin{align}\label{eq:zeta-new}
		&\zetahat_{R'_{2,2m},n}(s)\notag\\
		&\quad= (z;z)_n \sum_{\substack{r_1,\ldots,r_m\ge 0\\s_1,\ldots,s_m\ge 0}} \frac{t^{\sum_{i=1}^m (2r_i-s_i)} z^{\sum_{i=1}^m (r_i^2-r_is_i+s_i^2)}}{(z;z)_{n-r_m} (z;z)_{r_{m}-r_{m-1}}\cdots (z;z)_{r_2-r_1}(z;z)_{r_1}(t^2z^2;z^2)_{r_1}}\notag\\
		&\quad\quad\times \frac{(-z;z)_{r_1}}{(-z;z)_{s_1}}\qbinom{r_m-s_{m-1}}{r_m-s_m}_z\qbinom{r_{m-1}-s_{m-2}}{r_{m-1}-s_{m-1}}_z\cdots \qbinom{r_{2}-s_1}{r_{2}-s_{2}}_z\qbinom{r_1}{r_1-s_1}_z,
	\end{align}
	where $z:=q^{-1}$ and $t:=q^{-s}$.
\end{corollary}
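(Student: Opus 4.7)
My plan is to derive Corollary~\ref{coro:zeta-new} as an essentially mechanical consequence of Theorem~\ref{thm:first-explicit}, proceeding in three steps: first normalize from $\nu$ to $\zetahat$, then substitute the explicit product formulas for $g^{(m^n)}_\mu(q)$ and $B(m,n,\lambda,\mu,q)$, and finally re-parameterize by partition-conjugate coordinates.

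First, combining $\zetahat_{R,n}(s) = \zeta^R_{R^n}(s+n)$ with Solomon's formula \eqref{eq:solomon-formula} gives
\[
\zetahat_{R'_{2,2m},n}(s) \;=\; \frac{\nu^R_{R^n}(s+n)}{(t^2 z^2;z^2)_n},
\]
with $t=q^{-s}$ and $z=q^{-1}$. Applying Theorem~\ref{thm:first-explicit} with the shift $s\to s+n$ replaces every running $t$ in the right-hand side by $tz^n$. Its prefactor $(t^2 q^{-2n};q^2)_{\mu'_1}$ telescopes to $(t^2z^2;z^2)_n/(t^2z^2;z^2)_{n-\mu'_1}$, so after dividing by $(t^2z^2;z^2)_n$ only $1/(t^2z^2;z^2)_{n-\mu'_1}$ survives, which is exactly the $1/(t^2z^2;z^2)_{r_1}$ factor in \eqref{eq:zeta-new} once we set $r_1 := n-\mu'_1$.

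Next I substitute Theorem~\ref{thm:hall_skew} applied to the rectangular shape $(m^n)$, namely
\[
g^{(m^n)}_\mu(q) \;=\; q^{\sum_{i=1}^m \mu'_i(n-\mu'_i)}\prod_{i=1}^m \qbinom{n-\mu'_{i+1}}{n-\mu'_i}_z,
\]
together with the explicit product form of $B(m,n,\lambda,\mu,q)$ from the remark after Theorem~\ref{thm:cTR-count}. I then re-index by
\[
r_i := n-\mu'_i, \qquad s_i := n-\lambda'_i \qquad (i=1,\dots,m),
\]
with $\mu'_{m+1}=\lambda'_{m+1}=0$. Under this change of variables, the nesting $\mu\subseteq \lambda\subseteq (m^n)$ becomes $0\le s_1\le \cdots\le s_m$, $0\le r_1\le\cdots\le r_m\le n$, and $s_i\le r_i$, all automatically enforced by the vanishing of the Pochhammer and $q$-binomial factors. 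The product of $q$-binomials in $g^{(m^n)}_\mu$ telescopes to $(z;z)_n/\bigl[(z;z)_{n-r_m}(z;z)_{r_m-r_{m-1}}\cdots(z;z)_{r_1}\bigr]$, reproducing both the denominator chain and the $(z;z)_n$ prefactor, while the $q$-binomials of $B$ become $\prod_{i=1}^{m-1}\qbinom{r_{i+1}-s_i}{r_{i+1}-s_{i+1}}_z$.

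The one nontrivial algebraic step is the identity
\[
(-q^{-1};q^{-1})_{r_1-s_1}\,\qbinom{n-\mu'_1}{n-\lambda'_1}_{q^{-2}} \;=\; \qbinom{r_1}{r_1-s_1}_z\cdot\frac{(-z;z)_{r_1}}{(-z;z)_{s_1}},
\]
which follows from $(z^2;z^2)_r=(z;z)_r(-z;z)_r$ and produces the peculiar ratio $(-z;z)_{r_1}/(-z;z)_{s_1}$ appearing in \eqref{eq:zeta-new}. After this, the corollary reduces to a bookkeeping check on the exponents: the identity $mn+|\lambda|-2|\mu| = \sum_i(2r_i-s_i)$ handles the $t$-exponent, while adding the $z^n$ contribution from the index shift, the $q^{\sum r_i(n-r_i)}$ from $g^{(m^n)}_\mu$, and the $q^{\sum(n+s_i)(r_i-s_i)}$ from $B$ collapses to a net $z^{\sum_i(r_i^2-r_is_i+s_i^2)}$, exactly as stated. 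The main obstacle is thus purely bookkeeping; conceptually, the only nonobvious ingredient is the $q^2$-to-$q$ base-change identity above.
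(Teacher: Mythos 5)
Your proposal is correct and follows essentially the same approach as the paper: normalize via Solomon's formula, substitute the explicit product formulas for $g^{(m^n)}_\mu$ from \eqref{eq:g-formula} and for $B(m,n,\lambda,\mu,q)$ from the remark after Theorem~\ref{thm:cTR-count}, split $(z^2;z^2)_k = (z;z)_k(-z;z)_k$, and reindex by conjugate coordinates. One note in your favor: the paper's proof writes the reindexing as $r_i := n-\lambda'_i$, $s_i := n-\mu'_i$, but your assignment $r_i := n-\mu'_i$, $s_i := n-\lambda'_i$ is the correct one, as is forced both by $(t^2;q^2)_{\mu'_1}$ telescoping to $(t^2z^2;z^2)_n/(t^2z^2;z^2)_{n-\mu'_1}$ (yielding the $1/(t^2z^2;z^2)_{r_1}$ factor) and by the $t$-exponent identity $mn+|\lambda|-2|\mu|=\sum_i(2r_i-s_i)$ --- so the paper's stated substitution is a typo with $\lambda\leftrightarrow\mu$ swapped.
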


\begin{proof}
	Recall that
	\begin{align*}
		\zetahat_{R,n}(s)=\zetahat_{\tl R,n}(s)\nuhat_{R,n}(s)=(t^2q^{-2};q^{-2})_n^{-1} \nu^R_{R^n}(s+n).
	\end{align*}
	Thus, Theorem~\ref{thm:first-explicit} yields the desired formula after setting $r_i:=n-\lambda_{i}'$ and $s_i:=n-\mu_{i}'$ and applying the expressions recorded in \eqref{eq:g-formula} and \eqref{eq:B-formula}.
\end{proof}

\subsection{Proof of Theorem~\ref{thm:main}}

Assume Theorem~\ref{th:X-a-indep}, which will be established separately in the next section. We know from Corollary~\ref{coro:zeta-new} that
\begin{align*}
	\zetahat_{R'_{2,2m},n}(s) = (z;z)_n \cdot \frac{1}{(tz;z)_n}\cX_n^{(m)}(-1,-t,z).
\end{align*}
Now invoking \eqref{eq:X-nice-exp} gives
\begin{align*}
	&\zetahat_{R'_{2,2m},n}(s)\\
	&\quad = \frac{(z;z)_n}{(tz;z)_n} \sum_{n_1,\ldots,n_m\ge 0} \frac{t^{\sum_{i=1}^m 2n_i} z^{\sum_{i=1}^m n_i^2}}{(z;z)_{N-n_m} (z;z)_{n_{m}-n_{m-1}}\cdots (z;z)_{n_2-n_1}(z;z)_{n_1}(-tz;z)_{n_1}},
\end{align*}
which is precisely $(tz;z)_n^{-1}\Br_n^{(2m+2)}(t,z)$. \qed

\section{The phantasmal ``$a$''}\label{sec:a-indep}

Now the remaining and ultimate task is to establish the $a$-independence for our multisum $\cX_N^{(m)}(a,t,q)$ claimed in Theorem~\ref{th:X-a-indep}. Recall that
\begin{align}\label{eq:X-def-new}
	&\cX_N^{(m)}(a,t,q)\notag\\
	&\quad=(atq)_N\sum_{\substack{r_1,\ldots,r_m\ge 0\\s_1,\ldots,s_m\ge 0}} \frac{a^{\sum_{i=1}^m s_i}t^{\sum_{i=1}^m (2r_i-s_i)} q^{\sum_{i=1}^m (r_i^2-r_is_i+s_i^2)} (aq)_{r_1}}{(q)_{N-r_m} (q)_{r_{m}-r_{m-1}}\cdots (q)_{r_2-r_1}(q)_{r_1}(tq)_{r_1}(atq)_{r_1}(aq)_{s_1}}\notag\\
	&\quad\quad\times \qbinom{r_m-s_{m-1}}{r_m-s_m}\qbinom{r_{m-1}-s_{m-2}}{r_{m-1}-s_{m-1}}\cdots \qbinom{r_{2}-s_1}{r_{2}-s_{2}}\qbinom{r_1}{r_1-s_1}.
\end{align}

\subsection{$q$-Hypergeometric prerequisites}

We start by collecting some necessary $q$-hypergeometric transformations. Define the \emph{$q$-hypergeometric function ${}_{r}\phi_s$} by
\begin{align*}
	{}_{r}\phi_s\left(\begin{matrix} A_1,A_2,\ldots,A_r\\ B_1,B_2,\ldots,B_s \end{matrix}; q, z\right):=\sum_{n\ge 0} \frac{(A_1,A_2,\ldots,A_r;q)_n \big((-1)^n q^{\binom{n}{2}}\big)^{s-r+1} z^n}{(q,B_1,B_2,\ldots,B_{s};q)_n}.
\end{align*}

The \emph{$q$-binomial theorem} \cite[p.~354, eq.~(II.3)]{GR2004} is as follows:
\begin{lemma}[$q$-Binomial theorem]
	\begin{align}\label{eq:q-binomial}
		{}_{1} \phi_{0} \left(\begin{matrix}
			a\\
			-
		\end{matrix};q,z\right) = \frac{(az;q)_\infty}{(z;q)_\infty}.
	\end{align}
\end{lemma}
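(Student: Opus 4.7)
The plan is the classical $q$-difference equation argument. Set
\begin{align*}
F(z) := \frac{(az;q)_\infty}{(z;q)_\infty},
\end{align*}
viewed either as a holomorphic function in $|z|<1$ (for $|q|<1$) or as a formal power series in $z$ over $\Z[a][[q]]$. The first step is to verify the $q$-functional equation $(1-z) F(z) = (1-az) F(qz)$, which is immediate from the factorizations $(z;q)_\infty = (1-z)(qz;q)_\infty$ and $(az;q)_\infty = (1-az)(aqz;q)_\infty$.

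Next, I would expand $F(z) = \sum_{n\ge 0} c_n z^n$ with $c_0 = F(0) = 1$, and plug into the functional equation. Comparing the coefficient of $z^n$ on both sides (for $n\geq 1$) gives $c_n - c_{n-1} = q^n c_n - a q^{n-1} c_{n-1}$, which rearranges to the one-term recursion
\begin{align*}
c_n = c_{n-1}\,\frac{1 - aq^{n-1}}{1 - q^n}.
\end{align*}
Iterating from $c_0 = 1$ yields $c_n = (a;q)_n / (q;q)_n$, exactly the coefficient of $z^n$ in the ${}_1\phi_0$ series on the left-hand side of the claimed identity.

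There is essentially no obstacle; this is a textbook result. The only care points are to justify that coefficient extraction is legitimate --- automatic when working formally, and otherwise following from uniform convergence on compact subsets of $|z|<1$ in the analytic setting --- and to track the initial value, both routine. An alternative would be to directly cite \cite[eq.~(II.3)]{GR2004}, as the paper effectively does.
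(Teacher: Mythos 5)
Your proof is correct and complete: the $q$-functional equation $(1-z)F(z)=(1-az)F(qz)$ follows immediately from peeling one factor off each infinite product, the coefficient comparison gives the first-order recursion $c_n = \tfrac{1-aq^{n-1}}{1-q^n}c_{n-1}$, and iterating from $c_0=1$ yields $c_n=(a;q)_n/(q;q)_n$, which (since $s-r+1=0$ here) is exactly the ${}_1\phi_0$ coefficient. The paper itself supplies no proof --- it states the lemma and cites \cite[eq.~(II.3)]{GR2004} as a prerequisite --- so there is no argument to compare against; your standard $q$-difference-equation derivation is a perfectly appropriate way to self-contain the result, and you correctly flag the citation route as the alternative the paper actually takes.
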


For ${}_2\phi_{1}$ series, we have the \emph{$q$-Gau\ss{} sum} \cite[p.~354, eq.~(II.8)]{GR2004}:
\begin{lemma}[$q$-Gau\ss{} sum]
	\begin{align}\label{eq:qGauss}
		{}_{2} \phi_{1} \left(\begin{matrix}
			a,b\\
			c
		\end{matrix};q,\frac{c}{ab}\right) = \frac{(c/a,c/b;q)_\infty}{(c,c/(ab);q)_\infty}.
	\end{align}
\end{lemma}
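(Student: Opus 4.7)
The plan is to derive the $q$-Gauss sum \eqref{eq:qGauss} as a specialization of Heine's ${}_2\phi_1$ transformation
\begin{equation*}
{}_2\phi_1\!\left(\begin{matrix} a,b \\ c \end{matrix}; q, z\right) = \frac{(b, az; q)_\infty}{(c, z; q)_\infty} \, {}_2\phi_1\!\left(\begin{matrix} c/b,\; z \\ az \end{matrix}; q, b\right),
\end{equation*}
which itself rests on two applications of the $q$-binomial theorem \eqref{eq:q-binomial}. Throughout, I would work in the analytic regime $|q|<1$, $|z|<1$, and $|b|<1$ so that absolute convergence of all iterated series legitimizes the single interchange of summation needed; the resulting identity among rational functions of the parameters $a,b,c$ then extends by analytic continuation.

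To prove Heine's transformation, I first rewrite
\begin{equation*}
\frac{(b;q)_n}{(c;q)_n} = \frac{(b;q)_\infty}{(c;q)_\infty} \cdot \frac{(cq^n; q)_\infty}{(bq^n; q)_\infty},
\end{equation*}
and expand the right-most quotient via \eqref{eq:q-binomial} with parameter $c/b$ and argument $bq^n$, obtaining $\sum_{m\geq 0} (c/b;q)_m \, b^m q^{nm} / (q;q)_m$. Substituting into the defining series of ${}_2\phi_1(a,b;c;q,z)$ and swapping the sums over $n$ and $m$ reduces the inner $n$-sum to $\sum_n (a;q)_n (zq^m)^n / (q;q)_n = (azq^m;q)_\infty / (zq^m;q)_\infty$ by a second use of \eqref{eq:q-binomial}. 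Factoring $(az;q)_\infty/(z;q)_\infty$ outside the remaining $m$-sum and recognizing the identity $(azq^m;q)_\infty(z;q)_\infty / ((zq^m;q)_\infty(az;q)_\infty) = (z;q)_m/(az;q)_m$ then delivers Heine's transformation in its stated form.

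With Heine's transformation in hand, I would specialize $z = c/(ab)$ so that $az = c/b$. The upper parameter $c/b$ on the transformed ${}_2\phi_1$ now coincides with its lower parameter and cancels term-by-term, collapsing the series to a ${}_1\phi_0$:
\begin{equation*}
{}_2\phi_1\!\left(\begin{matrix} c/b,\; c/(ab) \\ c/b \end{matrix}; q, b\right) = {}_1\phi_0\!\left(\begin{matrix} c/(ab) \\ - \end{matrix}; q, b\right) = \frac{(c/a;q)_\infty}{(b;q)_\infty},
\end{equation*}
where the final equality is a third application of \eqref{eq:q-binomial} (with parameter $c/(ab)$ and argument $b$, yielding $(c/(ab)\cdot b;q)_\infty/(b;q)_\infty$). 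Multiplying by the Heine prefactor $(b, c/b;q)_\infty / (c, c/(ab);q)_\infty$ and cancelling the common factor of $(b;q)_\infty$ produces the right-hand side $(c/a, c/b;q)_\infty / (c, c/(ab);q)_\infty$, as claimed.

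The main obstacle is essentially bookkeeping rather than insight: one must carefully track how the Pochhammer shifts conspire through the summation swap so that the ${}_1\phi_0$ collapse yields exactly the target, and verify that the convergence region $|b|<1, |z|<1$ is compatible with the specialization $z=c/(ab)$ (it is, provided $|c/(ab)|<1$, which can be arranged and afterwards removed by analytic continuation). No fundamentally new technique is required, consistent with the paper's citation of the identity to \cite[p.~354, eq.~(II.8)]{GR2004}. As an alternative route, one could directly check that both sides of \eqref{eq:qGauss} satisfy the same first-order $q$-contiguous relation in the parameter $c$ and agree in a suitable limit (e.g.\ as $a\to 0$, where both sides reduce by \eqref{eq:q-binomial}), but the Heine-based derivation is both shorter and more illuminating.
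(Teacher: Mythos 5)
Your proof is correct: the derivation of Heine's transformation from two applications of the $q$-binomial theorem, followed by the specialization $z=c/(ab)$ that collapses the transformed series to a ${}_1\phi_0$, is exactly the standard argument in the cited source \cite[p.~354, eq.~(II.8)]{GR2004}, and the paper itself offers no proof beyond that citation. The convergence and analytic-continuation remarks are handled appropriately, so nothing further is needed.
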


In particular, the $q$-Gau\ss{} sum specializes to the \emph{first $q$-Chu--Vandermonde sum} \cite[p.~354, eq.~(II.7)]{GR2004}:

\begin{lemma}[First $q$-Chu--Vandermonde sum]
	For any nonnegative integer $N$,
	\begin{align}\label{eq:qCV-1}
		{}_{2} \phi_{1} \left(\begin{matrix}
			a,q^{-N}\\
			c
		\end{matrix};q,\frac{cq^N}{a}\right) = \frac{(c/a;q)_N}{(c;q)_N}.
	\end{align}
\end{lemma}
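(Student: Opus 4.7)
The plan is to derive this identity directly as the terminating specialization $b = q^{-N}$ of the $q$-Gauss sum stated in \eqref{eq:qGauss}. The key observation is that with this choice, the factor $(q^{-N};q)_n$ vanishes for $n > N$, so the series on the left of \eqref{eq:qCV-1} is automatically a finite sum (which also sidesteps any convergence concerns regarding the original $q$-Gauss sum). Moreover, the argument $c/(ab) = cq^N/a$ of the $q$-Gauss sum matches exactly the argument appearing in \eqref{eq:qCV-1}, so no further rearrangement of the summand is required.

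Concretely, I would substitute $b = q^{-N}$ into the right-hand side of \eqref{eq:qGauss} to obtain
\begin{equation*}
{}_{2}\phi_{1}\!\left(\begin{matrix} a,q^{-N}\\ c\end{matrix};q,\frac{cq^N}{a}\right) = \frac{(c/a,\,cq^N;q)_\infty}{(c,\,cq^N/a;q)_\infty},
\end{equation*}
and then simplify the right-hand side using the elementary shift identity
\begin{equation*}
(xq^N;q)_\infty = \frac{(x;q)_\infty}{(x;q)_N},
\end{equation*}
applied with $x = c$ and $x = c/a$. After cancelling the common factors $(c/a;q)_\infty$ and $(c;q)_\infty$ in numerator and denominator, the remaining ratio collapses to $(c/a;q)_N/(c;q)_N$, which is the claimed right-hand side of \eqref{eq:qCV-1}.

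There is essentially no serious obstacle: the identity is a routine specialization, and the only point worth noting is that the $q$-Gauss sum was stated as an identity of formal (or convergent) infinite series, while the $q$-Chu--Vandermonde sum is a purely algebraic statement about a finite sum of rational functions in $a, c, q$. To make the derivation rigorous regardless of convergence issues, I would point out that after substituting $b = q^{-N}$ the left-hand side of \eqref{eq:qGauss} is already a polynomial in $a, 1/a, c$, so the identity can be viewed as an identity of rational functions, holding wherever it makes sense. Alternatively, one can bypass \eqref{eq:qGauss} and give a short self-contained induction on $N$: the case $N=0$ is trivial, and the induction step follows from the contiguous relation $(c;q)_{N+1}/(c;q)_N = 1 - cq^N$ together with a one-line manipulation of the summand, yielding \eqref{eq:qCV-1} directly.
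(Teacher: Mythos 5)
Your proposal is correct and follows exactly the route the paper indicates: the paper states this lemma as the $b=q^{-N}$ specialization of the $q$-Gau\ss{} sum \eqref{eq:qGauss}, citing Gasper--Rahman, and your computation with the shift identity $(xq^N;q)_\infty=(x;q)_\infty/(x;q)_N$ correctly fills in the details of that specialization. Your remarks on termination and on viewing the result as an identity of rational functions are sensible but not needed beyond what the paper already takes for granted.
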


Reversing the order of summation, we have the \emph{second $q$-Chu--Vandermonde sum} \cite[p.~354, eq.~(II.6)]{GR2004}:

\begin{lemma}[Second $q$-Chu--Vandermonde sum]
	For any nonnegative integer $N$,
	\begin{align}\label{eq:qCV-2}
		{}_{2} \phi_{1} \left(\begin{matrix}
			a,q^{-N}\\
			c
		\end{matrix};q,q\right) = \frac{a^N (c/a;q)_N}{(c;q)_N}.
	\end{align}
\end{lemma}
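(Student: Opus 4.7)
As signaled by the lemma's preamble (``Reversing the order of summation''), the plan is to deduce \eqref{eq:qCV-2} from the first $q$-Chu--Vandermonde sum \eqref{eq:qCV-1} via the substitution $k \mapsto N-k$ in the defining series. Let
\begin{align*}
	S_N := \sum_{k=0}^{N} \frac{(a;q)_k (q^{-N};q)_k}{(q;q)_k (c;q)_k}\, q^k
\end{align*}
denote the left-hand side of \eqref{eq:qCV-2}; the sum is finite because $(q^{-N};q)_k$ vanishes for $k > N$.

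The first step is to rewrite each of $(a;q)_{N-k}$, $(c;q)_{N-k}$, $(q;q)_{N-k}$, $(q^{-N};q)_{N-k}$ using the Pochhammer reversal identity
\begin{align*}
	(x;q)_{N-k} = (x;q)_N \cdot \frac{(-x)^{-k}\, q^{\binom{k+1}{2}-Nk}}{(q^{1-N}/x;q)_k},
\end{align*}
which follows from $(x;q)_{N-k} = (x;q)_N/(xq^{N-k};q)_k$ together with the standard inversion $(A;q)_k = (-A)^k q^{\binom{k}{2}} (q^{1-k}/A;q)_k$. The case $x = q^{-N}$ is the key simplification: it produces $(q^{1-N}/q^{-N};q)_k = (q;q)_k$, which will cancel the $(q;q)_k$ contribution coming from $(q;q)_{N-k}$. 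Assembling the four transformed Pochhammer symbols and tracking the sign and $q$-exponent contributions, $S_N$ should collapse to
\begin{align*}
	S_N = \frac{(a;q)_N (q^{-N};q)_N\, q^N}{(q;q)_N (c;q)_N}\sum_{k=0}^{N} \frac{(q^{-N};q)_k (q^{1-N}/c;q)_k}{(q^{1-N}/a;q)_k (q;q)_k}\left(\frac{cq^N}{a}\right)^k.
\end{align*}
The inner sum is precisely of the shape ${}_2\phi_1(\alpha, q^{-N}; \gamma; q, \gamma q^N/\alpha)$ with $\alpha = q^{1-N}/c$ and $\gamma = q^{1-N}/a$, so the first $q$-Chu--Vandermonde sum \eqref{eq:qCV-1} evaluates it in closed form as $(c/a;q)_N/(q^{1-N}/a;q)_N$.

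The remaining step is cosmetic simplification of the closed form. Using $(q^{-N};q)_N = (-1)^N q^{-\binom{N+1}{2}} (q;q)_N$, the prefactor $(q^{-N};q)_N\, q^N/(q;q)_N$ reduces to $(-1)^N q^{-\binom{N}{2}}$, while the ``big'' inversion $(q^{1-N}/a;q)_N = (-a)^{-N} q^{-\binom{N}{2}} (a;q)_N$ turns $(a;q)_N/(q^{1-N}/a;q)_N$ into $(-a)^N q^{\binom{N}{2}}$. Multiplying these two pieces together, the signs and the $q$-exponents $\pm\binom{N}{2}$ cancel exactly, leaving $a^N (c/a;q)_N/(c;q)_N$ as desired. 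The main obstacle is nothing conceptual but rather the bookkeeping of the six inversions (four inside the sum, two outside), where the signs and the binomial $q$-exponents $\binom{k+1}{2}$, $\binom{N+1}{2}$, $\binom{N}{2}$ are easy to misalign; to keep this manageable, I would maintain three independent ledgers (signs, $q$-exponents, and Pochhammer symbols) and combine them only at the last moment.
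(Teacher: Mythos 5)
Your proof is correct and is exactly the argument the paper indicates: the lemma is quoted from Gasper--Rahman with the one-line hint ``reversing the order of summation,'' and your substitution $k\mapsto N-k$, reduction to the first $q$-Chu--Vandermonde sum \eqref{eq:qCV-1}, and final bookkeeping of signs and $q$-exponents all check out. No gaps.
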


Next, we recall \emph{Heine's three transformations} \cite[p.~359, eqs.~(III.1--3)]{GR2004} for ${}_{2} \phi_{1}$ series:

\begin{lemma}[Heine's transformations]
	\begin{align}
		{}_{2} \phi_{1} \left(\begin{matrix}
			a,b\\
			c
		\end{matrix};q,z\right) &= \frac{(b,az;q)_\infty}{(c,z;q)_\infty} {}_{2} \phi_{1} \left(\begin{matrix}
			c/b,z\\
			az
		\end{matrix};q,b\right),\label{eq:Heine1}\\
		{}_{2} \phi_{1} \left(\begin{matrix}
			a,b\\
			c
		\end{matrix};q,z\right) &= \frac{(c/b,bz;q)_\infty}{(c,z;q)_\infty} {}_{2} \phi_{1} \left(\begin{matrix}
			abz/c,b\\
			bz
		\end{matrix};q,\frac{c}{b}\right),\label{eq:Heine2}\\
		{}_{2} \phi_{1} \left(\begin{matrix}
			a,b\\
			c
		\end{matrix};q,z\right) &= \frac{(abz/c;q)_\infty}{(z;q)_\infty} {}_{2} \phi_{1} \left(\begin{matrix}
			c/a,c/b\\
			c
		\end{matrix};q,\frac{abz}{c}\right).\label{eq:Heine3}
	\end{align}
\end{lemma}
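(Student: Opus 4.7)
The plan is to prove the first Heine transformation \eqref{eq:Heine1} directly from the $q$-binomial theorem \eqref{eq:q-binomial} combined with an interchange of summation, and then derive \eqref{eq:Heine2} and \eqref{eq:Heine3} by iterating \eqref{eq:Heine1}.

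For \eqref{eq:Heine1}, I will expand the left-hand side as its defining series and rewrite the ratio $(b;q)_n/(c;q)_n$ via the $q$-binomial theorem. Using the identity $(b;q)_n/(c;q)_n = [(b;q)_\infty/(c;q)_\infty]\cdot (cq^n;q)_\infty/(bq^n;q)_\infty$ and then expanding the second factor via \eqref{eq:q-binomial} with parameter $c/b$ at argument $bq^n$, I get
$$\frac{(b;q)_n}{(c;q)_n} = \frac{(b;q)_\infty}{(c;q)_\infty}\sum_{m\ge 0}\frac{(c/b;q)_m}{(q;q)_m}(bq^n)^m.$$
Substituting this into the series for the left-hand side of \eqref{eq:Heine1} and interchanging the $n$- and $m$-sums (justified by absolute convergence for small $|z|,|b|$, then extended as a formal identity in parameters), the inner $n$-sum equals $\sum_{n\ge 0}(a;q)_n(zq^m)^n/(q;q)_n = (azq^m;q)_\infty/(zq^m;q)_\infty$ by another application of \eqref{eq:q-binomial}. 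Splitting $(azq^m;q)_\infty = (az;q)_\infty/(az;q)_m$ and $(zq^m;q)_\infty = (z;q)_\infty/(z;q)_m$, the surviving $m$-sum reassembles into ${}_2\phi_1(c/b,z;az;q,b)$ with prefactor $(b,az;q)_\infty/(c,z;q)_\infty$, matching \eqref{eq:Heine1}.

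For \eqref{eq:Heine2}, I will apply \eqref{eq:Heine1} a second time to the inner series on the right-hand side of \eqref{eq:Heine1}. A direct substitution would invert the transformation, so I first use the $a\leftrightarrow b$ symmetry of ${}_2\phi_1$ to rewrite the inner series as ${}_2\phi_1(z,c/b;az;q,b)$ and then invoke \eqref{eq:Heine1} with the substitution $(a,b,c,z)\mapsto(z,c/b,az,b)$. Upon collecting the two sets of infinite Pochhammer prefactors, the factors $(az;q)_\infty$ and $(b;q)_\infty$ cancel between numerator and denominator, leaving the prefactor $(c/b,bz;q)_\infty/(c,z;q)_\infty$ and the inner series ${}_2\phi_1(abz/c,b;bz;q,c/b)$, as stated in \eqref{eq:Heine2}. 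For \eqref{eq:Heine3}, I iterate one more time: applying \eqref{eq:Heine1} to the inner series of \eqref{eq:Heine2} after again using the numerator-parameter symmetry, the prefactors $(c/b,bz;q)_\infty$ telescope against the new factors to leave only $(abz/c;q)_\infty/(z;q)_\infty$, while the inner series becomes ${}_2\phi_1(c/a,c/b;c;q,abz/c)$.

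The only real difficulty is bookkeeping the Pochhammer prefactors and choosing the right numerator-parameter ordering at each iteration so that the application of \eqref{eq:Heine1} is non-trivial rather than an inversion. The substantive analytical input is contained entirely in \eqref{eq:q-binomial}; no additional identities (not even the $q$-Gauss sum \eqref{eq:qGauss} or the $q$-Chu--Vandermonde sums \eqref{eq:qCV-1}, \eqref{eq:qCV-2}) are required for these three transformations.
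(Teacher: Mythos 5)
Your proof is correct, and all the bookkeeping checks out: the $q$-binomial expansion of $(b;q)_n/(c;q)_n$, the interchange of sums, and the two iterations (with the numerator-parameter swap at each stage to avoid simply inverting the transformation) each produce exactly the stated prefactors and inner series. The paper itself does not prove this lemma --- it recalls \eqref{eq:Heine1}--\eqref{eq:Heine3} by citation to Gasper--Rahman --- and your argument is precisely the classical proof given in that reference, resting only on the $q$-binomial theorem \eqref{eq:q-binomial} as you note.
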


The final important ingredient is a transformation formula for ${}_{3} \phi_{2}$ series \cite[p.~359, eq.~(III.9)]{GR2004}:

\begin{lemma}
	\begin{align}\label{eq:3phi2}
		{}_{3} \phi_{2} \left(\begin{matrix}
			a,b,c\\
			d,e
		\end{matrix};q,\frac{de}{abc}\right) = \frac{(e/a,de/(bc);q)_\infty}{(e,de/(abc);q)_\infty} {}_{3} \phi_{2} \left(\begin{matrix}
			a,d/b,d/c\\
			d,de/(bc)
		\end{matrix};q,\frac{e}{a}\right).
	\end{align}
\end{lemma}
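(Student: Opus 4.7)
The transformation \eqref{eq:3phi2} is a classical near-poised ${}_3\phi_2$ identity, and my plan is to derive it by introducing an auxiliary summation index that promotes one of Heine's ${}_2\phi_1$ transformations \eqref{eq:Heine1}--\eqref{eq:Heine3} to the level of ${}_3\phi_2$, and then resumming.

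First, I would rewrite the LHS as a double sum by combining the $q$-binomial theorem \eqref{eq:q-binomial} with the elementary identities
$$(z;q)_n = \frac{(z;q)_\infty}{(z q^n;q)_\infty} \quad\text{and}\quad \frac{1}{(w;q)_\infty} = \sum_{k\geq 0} \frac{w^k}{(q;q)_k},$$
applied to one finite Pochhammer factor in the summand --- specifically to the factor $(c;q)_n /(e;q)_n$, which carries the ``poising'' between $c$ and $e$. This expansion converts the LHS into a double sum in indices $n$ and $k$, in which the inner $n$-sum becomes a ${}_2\phi_1$ whose parameters and argument depend on $k$ only through multiplicative shifts by $q^k$.

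Second, I would recognize the inner ${}_2\phi_1$ as one to which Heine's third transformation \eqref{eq:Heine3} --- or, if required, a composition of \eqref{eq:Heine1} and \eqref{eq:Heine2} --- applies. Carrying out that transformation produces a new ${}_2\phi_1$ whose parameters involve $d/b$ and $d/c$ and whose argument is proportional to $e/a$; the infinite $q$-Pochhammer prefactors produced by Heine's transformation absorb cleanly with the auxiliary factors from Step~1 to assemble the prefactor $(e/a, de/(bc);q)_\infty /(e, de/(abc);q)_\infty$ appearing on the RHS. Finally, I would swap the order of summation and resum the now-inner $k$-sum using \eqref{eq:q-binomial} once more; the remaining outer $n$-sum will be exactly the ${}_3\phi_2$ series on the RHS.

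The main obstacle will be bookkeeping --- there are several plausible choices of which finite Pochhammer factor to expand in the first step and which Heine transformation to apply in the second, and only certain combinations yield an inner sum that is both Heine-transformable and, after rearrangement, leaves a geometric inner $k$-sum resummable by \eqref{eq:q-binomial}. A secondary technical point is justifying the interchange of summations, but this is not a real obstacle: in all intended applications $q$ is a formal variable, so we may work throughout in the ring of formal Laurent series in $q$, where all manipulations are automatically valid; alternatively, one imposes standard smallness constraints such as $|e/a|<1$ and $|de/(abc)|<1$ and invokes Fubini.
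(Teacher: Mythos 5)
This lemma is not proved in the paper at all: it is quoted verbatim from Gasper--Rahman \cite[p.~359, eq.~(III.9)]{GR2004}, so you are supplying the derivation the paper outsources. Your plan is the correct one and is in fact the classical proof of this transformation (it is Heine's double-sum method with the $q$-binomial theorem, exactly as \eqref{eq:Heine1}--\eqref{eq:Heine3} themselves are proved). The bookkeeping you worry about resolves as follows. Expand the ratio $(a;q)_n/(e;q)_n$ (rather than $(c;q)_n/(e;q)_n$) by \eqref{eq:q-binomial}, and apply Heine's \emph{third} transformation \eqref{eq:Heine3} to the inner series in $b,c$; this is the essential choice, because \eqref{eq:Heine3} is the only one of the three that leaves the parameters of the new ${}_2\phi_1$ free of the auxiliary index $k$, pushing $q^k$ entirely into the argument. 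One gets
\begin{align*}
\LHS\eqref{eq:3phi2} &= \frac{(a)_\infty}{(e)_\infty}\sum_{k\ge 0}\frac{(e/a)_k}{(q)_k}\,a^k\;{}_2\phi_1\!\left(\begin{matrix} b,c\\ d\end{matrix};q,\frac{de q^k}{abc}\right)\\
&= \frac{(a)_\infty}{(e)_\infty}\sum_{k\ge 0}\frac{(e/a)_k\,(eq^k/a)_\infty}{(q)_k\,(deq^k/(abc))_\infty}\,a^k\sum_{m\ge 0}\frac{(d/b,d/c;q)_m}{(q,d;q)_m}\left(\frac{eq^k}{a}\right)^m,
\end{align*}
and since $(e/a)_k(eq^k/a)_\infty=(e/a)_\infty$ and $(deq^k/(abc))_\infty^{-1}=(de/(abc))_k\,(de/(abc))_\infty^{-1}$, interchanging $k$ and $m$ leaves the geometric-type inner sum $\sum_{k}(de/(abc))_k(aq^m)^k/(q)_k$, which \eqref{eq:q-binomial} evaluates to $(deq^m/(bc))_\infty/(aq^m)_\infty$; this produces exactly the factors $(a)_m/(de/(bc))_m$ and the prefactor on the right-hand side. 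Your stated choice of expanding $(c;q)_n/(e;q)_n$ is not wrong, but it lands on the transformation with $a$ and $c$ interchanged (prefactor $(e/c,de/(ab);q)_\infty$, argument $e/c$); since the left-hand side is symmetric in $a,b,c$, that version is equivalent to the stated one after relabelling, so you would need to add that one-line remark. One caution on rigor: working ``formally in $q$'' does not make sense here, since $q$ sits in the base of the Pochhammer symbols; the correct justification is the analytic one you also mention (take $|q|<1$, $|a|$, $|e/a|$, $|de/(abc)|<1$ so all interchanges are absolutely convergent, then analytically continue), which is standard.
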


\subsection{Reformulation}

In this part, our objective is to reformulate $\cX_N^{(m)}(a,t,q)$ in a way similar to that in \cite[Theorem~5.3]{shane2024multiple}.

\begin{theorem}
	For any nonnegative integer $N$,
	\begin{align}\label{eq:X-new}
		\cX_N^{(m)}(a,t,q)&= \frac{(aq)_\infty(t^2q)_\infty}{(tq)_\infty(atq^{N+1})_\infty} \sum_{\substack{u_1,\ldots,u_m\ge 0\\v_1,\ldots,v_m\ge 0}} a^{v_1+\sum_{i=1}^m u_i}t^{-2v_1+\sum_{i=1}^m (u_i+2v_i)}\notag\\
		&\quad\times \frac{q^{-v_1^2+v_1+\sum_{i=1}^m (u_i^2+u_iv_i+v_i^2)} (t)_{v_1}(a^{-1}t)_{v_1}}{(q)_{N-u_m}(t^2q)_{N+v_{m}}(q)_{u_m}(aq)_{u_1}(q)_{v_1}}\notag\\
		&\quad\times \qbinom{u_m}{u_{m-1}} \qbinom{u_{m-1}}{u_{m-2}}\cdots \qbinom{u_{2}}{u_{1}} \qbinom{v_1}{v_2} \cdots \qbinom{v_{m-2}}{v_{m-1}}\qbinom{v_{m-1}}{v_m}.
	\end{align}
\end{theorem}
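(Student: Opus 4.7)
The plan is to establish the identity by iteratively applying the $q$-hypergeometric transformations collected in the previous subsection --- Heine, $q$-Chu--Vandermonde, and the ${}_{3}\phi_{2}$ transformation --- peeling off one inner layer of the $2m$-fold sum $\cX_N^{(m)}(a,t,q)$ at a time. The starting point is the innermost coupled pair $(r_1,s_1)$, where all of the $a$-dependent Pochhammers $(aq)_{r_1}$, $(aq)_{s_1}$, $(atq)_{r_1}$, and $(atq)_N$ are concentrated, together with $(tq)_{r_1}$. I would interchange the order of summation so that $s_1$ is summed first, with $r_1, r_2, s_2$ treated as parameters; the resulting terminating sum in $s_1$ should be recognizable as a ${}_{3}\phi_{2}$, and the transformation \eqref{eq:3phi2} is then applied to convert the $(aq)_{s_1}$-dependence into a new summation variable $v_1$ equipped with the ``dual'' Pochhammer pair $(t)_{v_1}(a^{-1}t)_{v_1}$.

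Once the $s_1$ layer has been transformed, the remaining $r_1$-sum reduces to a ${}_{2}\phi_{1}$ involving the quotient $(aq)_{r_1}/(atq)_{r_1}$, which can be reshaped by Heine's transformation \eqref{eq:Heine1} (or \eqref{eq:Heine3}) to produce the infinite-product prefactor $(aq)_\infty (t^2q)_\infty / \bigl((tq)_\infty (atq^{N+1})_\infty\bigr)$ appearing on the right-hand side. For the outer layers $i = 2, \ldots, m$, the $s_i$-summation is by now decoupled from the $a$-dependent Pochhammers and should be handled using the first or second $q$-Chu--Vandermonde summation \eqref{eq:qCV-1}/\eqref{eq:qCV-2}. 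At each such step I would set $u_i := r_i$ and introduce $v_i$ by a reversed reindexing of $s_i$ --- note that the $v$-chain in the target decreases while the $s$-chain in the source increases, which explains the orientation of the $q$-binomials $\qbinom{v_{i-1}}{v_i}$ on the right --- bringing the two chains into the independent ``$u$'' and ``$v$'' forms indicated by the binomial structures $\qbinom{u_i}{u_{i-1}}$ and $\qbinom{v_{i-1}}{v_i}$.

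The main obstacle is the first step: identifying the precise specialization of the ${}_{3}\phi_{2}$ transformation \eqref{eq:3phi2} that simultaneously isolates the $(t)_{v_1}(a^{-1}t)_{v_1}$ pair --- crucial for the $a$-independence argument in Theorem~\ref{th:X-a-indep} --- and delivers the desired infinite-product prefactor. The bookkeeping thereafter, namely verifying that the quadratic $q$-exponent $\sum_i (r_i^2 - r_i s_i + s_i^2)$ restructures as $-v_1^2 + v_1 + \sum_i(u_i^2 + u_i v_i + v_i^2)$ and that the monomial weights in $a$ and $t$ match, should be a routine though tedious exercise once the first step is correctly calibrated.
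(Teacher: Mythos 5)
Your high-level strategy of peeling off the innermost layer with a ${}_{3}\phi_{2}$ transformation followed by Heine is in the right spirit, but the specific first move you propose does not work. You want to sum over $s_1$ first with $r_1,r_2,s_2$ held fixed and then claim the ${}_{3}\phi_{2}$ transformation manufactures the pair $(t)_{v_1}(a^{-1}t)_{v_1}$. But the $s_1$-sum carries only $(aq)_{s_1}$, $(q)_{s_1}$, and $q$-binomial data; the $t$- and $at$-Pochhammers live entirely in the $r_1$-sum through $(tq)_{r_1}$ and $(atq)_{r_1}$. A ${}_{3}\phi_{2}$ transformation rewrites the parameters it is given --- it cannot create the parameters $t$ and $a^{-1}t$ out of a sum that never sees them. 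This is why the paper first substitutes $v_i := r_i - s_i$ and then applies the ${}_{3}\phi_{2}$ transformation \eqref{eq:3phi2} followed by Heine's second transformation \eqref{eq:Heine2} to the \emph{$v_1$-sum} (Lemma~\eqref{eq:trans-2}), where the $(aq)_{v_1+s_1}$, $(tq)_{v_1+s_1}$, $(atq)_{v_1+s_1}$ data is present and the pair $(t)_{v_1}(a^{-1}t)_{v_1}$ emerges naturally.

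Your re-indexing $u_i := r_i$ is also inconsistent with the target: since $(aq)_{s_1}$ in the source must match $(aq)_{u_1}$ in \eqref{eq:X-new}, the correct identification is $u_i := s_i$ (which is precisely the rename the paper performs at the end), while $v_i := r_i - s_i$. Finally, the outer layers $i\ge 2$ are not dispatched by a chain of $q$-Chu--Vandermonde sums as you suggest; the paper instead invokes the multi-fold reshuffling identity from \cite[Lemma~5.2]{shane2024multiple}, which converts the $q$-binomials $\qbinom{v_m+s_m-s_{m-1}}{v_{m-1}}\cdots$ and the denominator $(t^2q)_{v_1+v_2+s_2}$ into the decoupled chain $\qbinom{v_1}{v_2}\cdots\qbinom{v_{m-1}}{v_m}$ and $(t^2q)_{N+v_m}$. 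This step is not a sequence of isolated ${}_{2}\phi_{1}$ evaluations and your proposal has no substitute for it. In short, the first transformation is aimed at the wrong inner sum, the variable identification is reversed, and a key cited lemma is missing.
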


We start by opening the $q$-binomial coefficients in \eqref{eq:X-def-new} and then reorganizing the $q$-factorials to get
\begin{align*}
	\cX_N^{(m)}(a,t,q)&= (atq)_N\sum_{\substack{r_1,\ldots,r_m\ge 0\\s_1,\ldots,s_m\ge 0}} \frac{a^{\sum_{i=1}^m s_i} t^{\sum_{i=1}^m (2r_i-s_i)} q^{\sum_{i=1}^m (r_i^2-r_is_i+s_i^2)} (aq)_{r_1}}{(q)_{N-r_m}(q)_{r_{m}-s_{m}}(q)_{s_m}(tq)_{r_1}(atq)_{r_1}(aq)_{s_1}}\notag\\
	&\quad\times \qbinom{s_m}{s_{m-1}}\cdots \qbinom{s_{2}}{s_{1}}\qbinom{r_m-s_{m-1}}{r_{m-1}-s_{m-1}}\cdots \qbinom{r_{2}-s_1}{r_1-s_{1}}.
\end{align*}
Now applying the substitution
\begin{align*}
	v_i := r_i - s_i
\end{align*}
for every $i$ with $1\le i\le m$, we have
\begin{align}\label{eq:X-sum-s-n}
	\cX_N^{(m)}(a,t,q)&= (atq)_N\sum_{\substack{s_1,\ldots,s_m\ge 0\\v_1,\ldots,v_m\ge 0}} \frac{(at)^{\sum_{i=1}^m s_i} t^{\sum_{i=1}^m 2v_i} q^{\sum_{i=1}^m (v_i^2+s_iv_i+s_i^2)} (aq)_{v_1+s_1}}{(q)_{(N-s_m)-v_m}(q)_{v_{m}}(q)_{s_m}(tq)_{v_1+s_1}(atq)_{v_1+s_1}(aq)_{s_1}}\notag\\
	&\quad\times \qbinom{s_m}{s_{m-1}}\cdots \qbinom{s_{2}}{s_{1}}\qbinom{v_m+s_m-s_{m-1}}{v_{m-1}}\cdots \qbinom{v_{2}+s_2-s_1}{v_1}.
\end{align}
For the moment, let us single out the sums over $v_1,\ldots,v_m$, which are
\begin{align*}
	\Sigma &:= \sum_{v_1,\ldots,v_m\ge 0} \frac{t^{\sum_{i=1}^m 2v_i} q^{\sum_{i=1}^m (v_i^2+s_iv_i)} (aq)_{v_1+s_1}}{(q)_{(N-s_m)-v_m}(q)_{v_{m}}(tq)_{v_1+s_1}(atq)_{v_1+s_1}}\notag\\
	&\ \quad\times\qbinom{v_m+s_m-s_{m-1}}{v_{m-1}}\cdots \qbinom{v_{2}+s_2-s_1}{v_1}\\
	&\ = \sum_{v_2,\ldots,v_m\ge 0} \frac{t^{\sum_{i=2}^m 2v_i} q^{\sum_{i=2}^m (v_i^2+s_iv_i)}}{(q)_{(N-s_m)-v_m}(q)_{v_{m}}}\qbinom{v_m+s_m-s_{m-1}}{v_{m-1}}\cdots \qbinom{v_{3}+s_3-s_2}{v_2}\\
	&\ \quad\times (q)_{v_2+s_2-s_1}\sum_{v_1\ge 0} \frac{t^{2v_1} q^{v_1^2+s_1v_1} (aq)_{v_1+s_1}}{(q)_{(v_2+s_2-s_1)-v_1} (q)_{v_1} (tq)_{v_1+s_1}(atq)_{v_1+s_1}}.
\end{align*}
To work on the inner sum over $v_1$, it is necessary to establish the following generalization of \cite[Lemma~5.1]{shane2024multiple}:

\begin{lemma}
	For any nonnegative integers $M$ and $N$,
	\begin{align}\label{eq:trans-2}
		&\sum_{n\ge 0} \frac{t^{2n} q^{n^2 + Mn} (aq)_{M+n}}{(q)_{N-n} (q)_n (tq)_{M+n} (atq)_{M+n}} = \frac{(aq)_\infty (t^2q)_\infty}{(tq)_\infty (atq)_\infty (q)_N} \sum_{n\ge 0} \frac{a^n q^{(M+1)n} (t)_{n} (a^{-1}t)_n}{(q)_{n} (t^2q)_{M+N+n}}.
	\end{align}
\end{lemma}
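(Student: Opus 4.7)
The plan is to reduce both sides of \eqref{eq:trans-2} to standard $q$-hypergeometric series, and then bridge them through the ${}_3\phi_2$ transformation \eqref{eq:3phi2} (applied via a phantom parameter sent to infinity) followed by Heine's second transformation \eqref{eq:Heine2}.

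First, I would rewrite each side as a single $q$-hypergeometric function. On the left, using $(xq;q)_{M+n} = (xq)_M(xq^{M+1};q)_n$ for $x = a, t, at$ to pull out the $M$-dependence, and substituting the standard identity $1/(q;q)_{N-n} = (-1)^n q^{nN-\binom{n}{2}} (q^{-N};q)_n/(q;q)_N$, the remaining power of $q$ comes out to be $\binom{n}{2} + n(M+N+1)$, which is exactly the shape needed to produce the ${}_2\phi_2$ phase $(-1)^n q^{\binom{n}{2}}$ multiplying $(t^2q^{M+N+1})^n$. This yields
\begin{align*}
\text{LHS} = \frac{(aq)_M}{(q)_N (tq)_M (atq)_M}\,{}_2\phi_2\!\left(\begin{matrix} q^{-N}, aq^{M+1}\\ tq^{M+1}, atq^{M+1}\end{matrix};\,q,\,t^2 q^{M+N+1}\right).
\end{align*}
On the right, factoring $(t^2q)_{M+N}$ out of $(t^2q)_{M+N+n}$ gives
\begin{align*}
\text{RHS} = \frac{(aq)_\infty(t^2q)_\infty}{(tq)_\infty(atq)_\infty(q)_N(t^2q)_{M+N}}\,{}_2\phi_1\!\left(\begin{matrix} t, a^{-1}t\\ t^2q^{M+N+1}\end{matrix};\,q,\,aq^{M+1}\right).
\end{align*}

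Next, I would bridge the ${}_2\phi_2$ to a ${}_2\phi_1$. Introduce a phantom parameter $C$ and recognize the ${}_2\phi_2$ above as the termwise $C\to\infty$ limit of
\begin{align*}
{}_3\phi_2\!\left(\begin{matrix} q^{-N}, aq^{M+1}, C\\ tq^{M+1}, atq^{M+1}\end{matrix};\,q,\,t^2q^{M+N+1}/C\right),
\end{align*}
using the standard asymptotic $(C;q)_n(z/C)^n \to (-z)^n q^{\binom{n}{2}}$. The balance relation $de/(abc) = t^2q^{M+N+1}/C$ in \eqref{eq:3phi2} holds with $(a,b,c,d,e) = (q^{-N}, aq^{M+1}, C, tq^{M+1}, atq^{M+1})$, so \eqref{eq:3phi2} applies. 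Taking $C \to \infty$ in the transformed ${}_3\phi_2$, the two infinite products carrying $C$ both tend to unity, the factor $(atq^{M+N+1};q)_\infty/(atq^{M+1};q)_\infty = 1/(atq^{M+1};q)_N$ survives, and the internal ${}_3\phi_2$ collapses into a ${}_2\phi_1$:
\begin{align*}
{}_2\phi_2\!\left(\begin{matrix} q^{-N}, aq^{M+1}\\ tq^{M+1}, atq^{M+1}\end{matrix};\,q,\,t^2q^{M+N+1}\right) = \frac{1}{(atq^{M+1};q)_N}\,{}_2\phi_1\!\left(\begin{matrix} q^{-N}, a^{-1}t\\ tq^{M+1}\end{matrix};\,q,\,atq^{M+N+1}\right).
\end{align*}

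Now I apply \eqref{eq:Heine2} to this last ${}_2\phi_1$ with $(a,b,c,z) = (q^{-N}, a^{-1}t, tq^{M+1}, atq^{M+N+1})$; one computes $c/b = aq^{M+1}$, $bz = t^2q^{M+N+1}$, $abz/c = t$, so the transformed series is precisely the ${}_2\phi_1$ appearing on the right side of \eqref{eq:trans-2}, with prefactor $(aq^{M+1}, t^2q^{M+N+1};q)_\infty/(tq^{M+1}, atq^{M+N+1};q)_\infty$. The final step is a short telescoping calculation, collapsing infinite products via $(x;q)_M(xq^{M+1};q)_\infty = (x;q)_\infty$ and $(atq;q)_M(atq^{M+1};q)_N(atq^{M+N+1};q)_\infty = (atq;q)_\infty$ to verify that the prefactors on both sides agree. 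The main obstacle is not conceptual but rather this bookkeeping of infinite products and the careful rearrangement of powers of $q$ and $t$ to extract the correct ${}_2\phi_2$ phase; each individual step is a routine application of identities already on hand.
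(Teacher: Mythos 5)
Your proposal is correct and follows essentially the same route as the paper: rewrite the left side via $1/(q)_{N-n}=(-1)^nq^{nN-\binom{n}{2}}(q^{-N})_n/(q)_N$, realize it as the limit of a balanced ${}_3\phi_2$ (your $C\to\infty$ is the paper's $\tau\to 0$ with $C=1/\tau$), apply transformation \eqref{eq:3phi2}, then Heine's second transformation \eqref{eq:Heine2}, and telescope the $q$-Pochhammer prefactors. The only cosmetic difference is that you name the intermediate ${}_2\phi_2$ explicitly rather than passing straight to the ${}_3\phi_2$ limit.
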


\begin{proof}
	We have
	\begin{align*}
		&\!\!\!\!\!\!\!\!\!\!\!\!\LHS\eqref{eq:trans-2}\\
		&= \frac{(aq)_M}{(q)_N(tq)_M(atq)_M} \sum_{n\ge 0} \frac{(-1)^n t^{2n} q^{\binom{n}{2}+(M+N+1)n} (q^{-N})_n (aq^{M+1})_n}{(q)_n (tq^{M+1})_n (atq^{M+1})_n}\\
		&= \frac{(aq)_M}{(q)_N(tq)_M(atq)_M} \lim_{\tau\to 0} {}_{3}\phi_{2} \left(\begin{matrix}
			q^{-N},1/\tau,aq^{M+1}\\
			tq^{M+1},atq^{M+1}
		\end{matrix};q,t^2q^{M+N+1}\tau\right)\\
		\text{\tiny (by \eqref{eq:3phi2})} &= \frac{(aq)_M}{(q)_N(tq)_M(atq)_M} \frac{(atq^{M+N+1})_\infty}{(atq^{M+1})_\infty} {}_{2}\phi_{1} \left(\begin{matrix}
			q^{-N},a^{-1}t\\
			tq^{M+1}
		\end{matrix};q,atq^{M+N+1}\right)\\
		\text{\tiny (by \eqref{eq:Heine2})}&= \frac{(aq)_M}{(q)_N (tq)_M (atq)_{M+N}} \frac{(aq^{M+1})_\infty (t^2q^{M+N+1})_\infty}{(tq^{M+1})_\infty (atq^{M+N+1})_\infty} {}_{2}\phi_{1} \left(\begin{matrix}
			t,a^{-1}t\\
			t^2q^{M+N+1}
		\end{matrix};q,aq^{M+1}\right)\\
		&= \frac{(aq)_\infty (t^2q)_\infty}{(tq)_\infty (atq)_\infty (q)_N} \sum_{n\ge 0} \frac{a^n q^{(M+1)n} (t)_{n} (a^{-1}t)_n}{(q)_{n} (t^2q)_{M+N+n}},
	\end{align*}
	as claimed.
\end{proof}

Applying this lemma, it follows that
\begin{align*}
	\Sigma& = \sum_{v_2,\ldots,v_m\ge 0} \frac{t^{\sum_{i=2}^m 2v_i} q^{\sum_{i=2}^m (v_i^2+s_iv_i)}}{(q)_{(N-s_m)-v_m}(q)_{v_{m}}}\qbinom{v_m+s_m-s_{m-1}}{v_{m-1}}\cdots \qbinom{v_{3}+s_3-s_2}{v_2}\\
	&\quad\times \frac{(aq)_\infty (t^2q)_\infty}{(tq)_\infty (atq)_\infty} \sum_{v_1\ge 0} \frac{a^{v_1} q^{(s_1+1)v_1} (t)_{v_1} (a^{-1}t)_{v_1}}{(q)_{v_1} (t^2q)_{v_1+v_2+s_2}},
\end{align*}
which, by interchanging the sum over $v_1$ and the remaining sums, gives us that
\begin{align*}
	\Sigma& = \frac{(aq)_\infty (t^2q)_\infty}{(tq)_\infty (atq)_\infty (q)_{N-s_m}} \sum_{v_1\ge 0} \frac{a^{v_1} q^{(s_1+1) v_1} (t)_{v_1} (a^{-1}t)_{v_1}}{(q)_{v_1}}\\
	&\quad\times \sum_{v_2,\ldots,v_m\ge 0} \frac{t^{\sum_{i=2}^m 2v_i} q^{\sum_{i=2}^m (v_i^2+s_iv_i)}}{(t^2q)_{v_1+v_2+s_2}}\\
	&\quad\times \qbinom{N-s_m}{v_m} \qbinom{v_m+s_m-s_{m-1}}{v_{m-1}}\cdots \qbinom{v_{3}+s_3-s_2}{v_2}.
\end{align*}
Now we recall that a consequence of \cite[Lemma~5.2]{shane2024multiple} is
\begin{align*}
	&\sum_{v_2,\ldots,v_m\ge 0} \frac{t^{\sum_{i=2}^m 2v_i} q^{\sum_{i=2}^m (v_i^2+s_iv_i)}}{(t^2q)_{v_1+v_2+s_2}} \qbinom{N-s_m}{v_m}\qbinom{v_m+s_m-s_{m-1}}{v_{m-1}}\cdots \qbinom{v_{3}+s_3-s_2}{v_{2}}\\
	&\qquad = \sum_{v_2,\ldots,v_m\ge 0} \frac{t^{\sum_{i=2}^m 2v_i} q^{\sum_{i=2}^m (v_i^2+s_iv_i)}}{(t^2q)_{N+v_m}} \qbinom{v_1}{v_2} \cdots \qbinom{v_{m-1}}{v_m},
\end{align*}
so that
\begin{align*}
	\Sigma &= \frac{(aq)_\infty (t^2q)_\infty}{(tq)_\infty (atq)_\infty (q)_{N-s_m}} \sum_{v_1,\ldots,v_m\ge 0} a^{v_1} t^{\sum_{i=2}^m 2v_i} q^{v_1+s_1v_1+\sum_{i=2}^m (v_i^2+s_iv_i)}\\
	&\quad\times\frac{(t)_{v_1} (a^{-1}t)_{v_1}}{(q)_{v_{1}} (t^2q)_{N+v_m}}\qbinom{v_1}{v_2}_q \cdots \qbinom{v_{m-1}}{v_m}_q.
\end{align*}
Finally, we substitute the above relation into \eqref{eq:X-sum-s-n} and rename $s_i$ by $u_i$ for each $i$ with $1\le i\le m$, thereby concluding \eqref{eq:X-new}.

\subsection{An auxiliary series}

From the moment, we focus on the following auxiliary series:
\begin{align}\label{eq:V-def}
	\cV_N^{(m)}(a,t,q)&:= \sum_{\substack{u_1,\ldots,u_m\ge 0\\v_1,\ldots,v_m\ge 0}} a^{v_1+\sum_{i=1}^m u_i}t^{-2v_1+\sum_{i=1}^m (u_i+2v_i)} q^{-v_1^2+v_1+\sum_{i=1}^m (u_i^2+u_iv_i+v_i^2)}\notag\\
	&\ \quad\times \frac{ (t)_{v_1}(a^{-1}t)_{v_1}}{(q)_{N-u_m}(q)_{u_m}(aq)_{u_1}(q)_{v_1}} \qbinom{u_m}{u_{m-1}}\cdots \qbinom{u_{2}}{u_{1}} \qbinom{v_1}{v_2} \cdots \qbinom{v_{m-1}}{v_m}.
\end{align}
An important feature of this family of series is its recursive structure.

\begin{lemma}\label{le:V-rec}
	For $m\ge 2$,
	\begin{align}\label{eq:V-rec}
		\cV_N^{(m)}(a,t,q) = \sum_{u,v\ge 0} \frac{a^{u+v} t^{u+2(m-1)v} q^{u^2+(u+1)v + (m-1)v^2} (t)_v (a^{-1}t)_v}{(q)_{N-u} (q)_v} \cV_u^{(m-1)}(a,t q^v,q).
	\end{align}
\end{lemma}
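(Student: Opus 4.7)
The plan is to verify the recursion by a direct change of summation variables on the left-hand side. Since $\cV_N^{(m)}(a,t,q)$ has $2m$ summation indices while the right-hand side of \eqref{eq:V-rec} has two outer indices plus the $2(m-1)$ packed inside $\cV_u^{(m-1)}(a,tq^v,q)$, the natural move is to peel off the ``top'' layer $u = u_m$, $v = v_m$ and relabel the rest by $u_i' = u_i$ for $1\le i\le m-1$ and $v_i' = v_i - v_m$ for $1\le i\le m-1$. The chain $v_1\ge v_2\ge\cdots\ge v_m$ forced by the nonvanishing of the $v$-binomials in $\cV_N^{(m)}$ then yields $v_1'\ge\cdots\ge v_{m-1}'\ge 0$, matching the nonvanishing range for the $v$-binomials in $\cV_u^{(m-1)}$.

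Three elementary manipulations carry out the substitution. First, the Pochhammer split $(t;q)_{v+v_1'} = (t;q)_v\,(tq^v;q)_{v_1'}$, applied to both $(t)_{v_1}$ and $(a^{-1}t)_{v_1}$, produces the $(tq^v)_{v_1'}(a^{-1}tq^v)_{v_1'}$ factors demanded inside $\cV_u^{(m-1)}(a,tq^v,q)$ while leaving the outer $(t)_v(a^{-1}t)_v$ of \eqref{eq:V-rec}. Second, combining $\qbinom{u_m}{u_{m-1}}$ with the $(q)_{u_m}$ denominator yields $1/[(q)_{u_{m-1}'}(q)_{u-u_{m-1}'}]$, exactly the $u$-denominator structure $\cV_u^{(m-1)}$ demands, while $(q)_{N-u_m}$ survives untouched as $(q)_{N-u}$. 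Third, the chain $(q)_{v_1}^{-1}\qbinom{v_1}{v_2}\cdots\qbinom{v_{m-1}}{v_m}$ telescopes under $v_i\mapsto v_i'+v$ to $(q)_v^{-1}\cdot (q)_{v_1'}^{-1}\prod_{i=1}^{m-2}\qbinom{v_i'}{v_{i+1}'}$, i.e., the inner $v$-factors of $\cV_u^{(m-1)}$ times the outer $(q)_v^{-1}$ of \eqref{eq:V-rec}.

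What is left is the bookkeeping of the $a$-, $t$-, and $q$-exponents. The $a$- and $t$-exponents split cleanly as $v_1+\sum_{i=1}^m u_i = (u+v) + (v_1'+\sum_{i=1}^{m-1}u_i')$ and $-2v_1+\sum_{i=1}^m(u_i+2v_i) = (u+2(m-1)v) + (-2v_1'+\sum_{i=1}^{m-1}(u_i'+2v_i'))$ respectively, with the second summands absorbed by the $t\to tq^v$ shift in $\cV_u^{(m-1)}$. The $q$-exponent reconciliation is the main obstacle: expanding $-v_1^2+v_1+\sum_{i=1}^m(u_i^2+u_iv_i+v_i^2)$ under the substitution and subtracting both the target inner exponent $-v_1'^2+v_1'+\sum(u_i'^2+u_i'v_i'+v_i'^2)$ and the $v(-2v_1'+\sum u_i'+2\sum v_i')$ contribution generated by $(tq^v)^{(\cdot)}$ inside $\cV_u^{(m-1)}$, the residue must collapse to exactly $u^2+(u+1)v+(m-1)v^2$. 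All the cross terms $-2v_1'v$ (from $-(v_1'+v)^2$), $v\sum u_i'$ (from $u_i'(v_i'+v)$), and $2v\sum v_i'$ (from $(v_i'+v)^2$) conspire to cancel against the corresponding $v(\cdot)$-pieces generated by the $t\to tq^v$ shift, leaving only $u^2+uv+v+(m-1)v^2$, as required. This calculation is elementary but must be tracked carefully to avoid arithmetic slips.
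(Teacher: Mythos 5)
Your proposal is correct and follows essentially the same route as the paper: peel off the top-layer indices $u_m$ and $v_m$, shift $v_i\mapsto v_i-v_m$ for the remaining indices (the paper phrases it as the inverse shift $v_i\mapsto v_i+v_m$ applied to the inner sum, which is the same change of variable), and regroup Pochhammer symbols and $q$-binomials. The exponent bookkeeping you outline is exactly what closes the argument.
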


\begin{proof}
	In \eqref{eq:V-def}, we first open the $q$-binomial coefficients and get
	\begin{align*}
		&\cV_N^{(m)}(a,t,q)\\
		&\quad= \sum_{\substack{u_1,\ldots,u_m\ge 0\\v_1,\ldots,v_m\ge 0}} a^{v_1+\sum_{i=1}^m u_i}t^{-2v_1+\sum_{i=1}^m (u_i+2v_i)} q^{-v_1^2+v_1+\sum_{i=1}^m (u_i^2+u_iv_i+v_i^2)}\\
		&\quad\quad\times \frac{ (t)_{v_1}(a^{-1}t)_{v_1}}{(q)_{N-u_m}(q)_{v_m}(q)_{u_1}(aq)_{u_1}} \frac{1}{(q)_{u_m-u_{m-1}}\cdots (q)_{u_2-u_1} (q)_{v_1-v_2}\cdots (q)_{v_{m-1}-v_m}}.
	\end{align*}
	Singling out the sums over $u_m$ and $v_m$, then $\cV_N^{(m)}(a,t,q)$ equals
	\begin{align*}
		&\sum_{u_m,v_m\ge 0} \frac{a^{u_m} t^{2v_m} q^{u_m^2+u_m v_m +v_m^2}}{(q)_{N-u_m}(q)_{v_m}}\\
		&\times \sum_{\substack{u_1,\ldots,u_{m-1}\ge 0\\v_1,\ldots,v_{m-1}\ge v_m}} a^{v_1+\sum_{i=1}^{m-1} u_i}t^{-2v_1+\sum_{i=1}^{m-1} (u_i+2v_i)} q^{-v_1^2+v_1+\sum_{i=1}^{m-1} (u_i^2+u_iv_i+v_i^2)}\\
		&\times \frac{ (t)_{v_1}(a^{-1}t)_{v_1}}{(q)_{u_1}(aq)_{u_1}} \frac{1}{(q)_{u_m-u_{m-1}}\cdots (q)_{u_2-u_1} (q)_{v_1-v_2}\cdots (q)_{v_{m-1}-v_m}}.
	\end{align*}
	For each $i$ with $1\le i\le m-1$, we shift the index
	\begin{align*}
		v_i \mapsto v_i + v_m,
	\end{align*}
	and then \eqref{eq:V-rec} follows after simplification.
\end{proof}

Now our task is to simplify $\cV_N^{(m)}(a,t,q)$ to an $m$-fold sum.

\begin{theorem}
	For any nonnegative integer $N$,
	\begin{align}\label{eq:V-expression}
		\cV_N^{(m)}(a,t,q) &= \frac{(atq)_\infty}{(aq)_\infty(q)_N} \sum_{n_1,\ldots,n_m\ge 0} (-1)^{n_m} t^{-n_m+\sum_{i=1}^m 2n_i} q^{-\binom{n_m}{2}+\sum_{i=1}^m n_i^2} \notag\\
		&\quad\times  \frac{(t)_{n_m}}{(q)_{n_m}(atq)_{N+n_1}} \qbinom{n_m}{n_{m-1}}\qbinom{n_{m-1}}{n_{m-2}}\cdots \qbinom{n_2}{n_1}.
	\end{align}
\end{theorem}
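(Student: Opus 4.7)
The plan is to prove \eqref{eq:V-expression} by induction on $m\ge 1$, using the recursion from Lemma~\ref{le:V-rec} as the key engine.

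\textbf{Base case $m=1$.} Here $\cV_N^{(1)}(a,t,q)$ is a two-fold sum in indices $u_1$ and $v_1$, which must be reduced to a single sum in $n_1$. I would first rewrite $1/(q)_{N-u_1}=(q^{-N};q)_{u_1}q^{Nu_1-\binom{u_1}{2}}/(q)_N$ in order to perform the sum over $u_1$ first. The resulting inner series takes the terminating ${}_1\phi_1$ shape (equivalently a limit of ${}_2\phi_1$), which collapses by way of Heine's transformations \eqref{eq:Heine2}--\eqref{eq:Heine3} or a $q$-Chu--Vandermonde summation \eqref{eq:qCV-1}--\eqref{eq:qCV-2}. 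The remaining sum over $v_1$ should then directly yield the $n_1$-sum in \eqref{eq:V-expression}, under the identification $v_1\leftrightarrow n_1$.

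\textbf{Inductive step.} Assume \eqref{eq:V-expression} for $m-1$, and substitute the inductive expression for $\cV_u^{(m-1)}(a,tq^v,q)$ into the recursion \eqref{eq:V-rec}. This produces an $(m+1)$-fold sum in $u,v$ and $n_1,\ldots,n_{m-1}$. I would identify the outer variable $v$ with the new top index $n_m$ of \eqref{eq:V-expression}, so that the chain $\qbinom{n_m}{n_{m-1}}\cdots\qbinom{n_2}{n_1}$ emerges by appending a new $q$-binomial to the chain inherited from the inductive hypothesis. The task then reduces to evaluating the sum over $u$ in closed form. After expanding the Pochhammer symbols $(tq^v)_{n_{m-1}}$ and $(atq^{v+1})_{u+n_1}$ by $(x)_{a+b}=(x)_a(xq^a)_b$ and extracting $(atq^{v+1})_\infty=(atq)_\infty/(atq)_v$, the $u$-sum takes the shape of a terminating ${}_3\phi_2$ (or related ${}_2\phi_1$) series, which should simplify under the ${}_3\phi_2$ identity \eqref{eq:3phi2} followed by one of Heine's transformations \eqref{eq:Heine1}--\eqref{eq:Heine3} --- a pattern that mirrors the derivation of \eqref{eq:trans-2} already carried out in this section.

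\textbf{Main obstacle.} The bulk of the work lies in matching Pochhammer factors and $q$-exponents across the chain of transformations. Two specific points require care. First, the outer summand carries an $(a^{-1}t)_v$ factor for which the target expression \eqref{eq:V-expression} has no explicit counterpart; this factor must cancel against Pochhammers produced while evaluating the $u$-sum, analogously to how $(a^{-1}t)_n$ appeared as an intermediate step in the reduction of \eqref{eq:trans-2}. Second, the $q$-exponent $q^{u^2+(u+1)v+(m-1)v^2}$ from the recursion, together with the $q$-shift $t\mapsto tq^v$ inside the inductive summand (which converts $(tq^v)^{-n_{m-1}+2\sum n_i}$ into a nontrivial $q$-contribution of $q^{v(-n_{m-1}+2\sum n_i)}$), must collapse under $v\leftrightarrow n_m$ and closed-form evaluation in $u$ to the target exponent $q^{-\binom{n_m}{2}+\sum_{i=1}^m n_i^2}$. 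This delicate balancing, together with producing the factor $1/(atq)_{N+n_1}$ from the $u$-summation, is where the most intricate $q$-arithmetic takes place.
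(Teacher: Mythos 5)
Your overall strategy --- induction on $m$ via Lemma~\ref{le:V-rec} --- matches the paper, and you correctly flag the Pochhammer cancellation and exponent bookkeeping as the delicate points, but both cases are built around an index identification that does not hold. In the base case you expect the $u_1$-sum to close in one step and the leftover $v_1$-sum to coincide with the target under $v_1\leftrightarrow n_1$. In fact three distinct moves are required: Heine's third transformation \eqref{eq:Heine3} applied to the $u_1$-sum (a transformation, not a summation --- it yields a new sum, not a closed form), then the $q$-binomial theorem \eqref{eq:q-binomial} on the resulting $v_1$-sum (this is what actually closes, and it is how $(a^{-1}t)_{v_1}$ disappears), and finally Heine's first transformation \eqref{eq:Heine1} on the surviving single sum. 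The target index $n_1$ in \eqref{eq:V-expression} is a fresh variable produced by that last step, not $v_1$; a temporary assumption $|t|<1$ with analytic continuation is also needed to justify the Heine step.

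In the inductive step, the crucial ingredient you are missing is the substitution $l_i:=n_i+v$ for $1\le i\le m-1$, performed immediately after inserting the inductive hypothesis. You propose $v\leftrightarrow n_m$ at the \emph{top} of the binomial chain, but $v$ ranges freely and is unconstrained relative to the inherited indices $n_1,\ldots,n_{m-1}$, so nothing enforces $n_{m-1}\le v$ as $\qbinom{n_m}{n_{m-1}}$ would require; moreover, the cross-terms $q^{v(-n_{m-1}+2\sum n_i)}$ arising from $t\mapsto tq^v$ do not cancel in your scheme. The paper's shift instead places $v$ at the \emph{bottom} of the chain ($v\le l_1\le\cdots\le l_{m-1}$), where those cross-terms and the shifted Pochhammer $(tq^v)_{n_{m-1}}$ simplify exactly, leaving a genuine \emph{double} sum over $u$ and $v$ to be closed --- this is the content of \eqref{eq:induction-key}. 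It is collapsed by applying Heine's second transformation \eqref{eq:Heine2} to the $v$-sum, which introduces a fresh index $l_0$, and then summing the $u$-sum in closed form via the first $q$-Chu--Vandermonde sum \eqref{eq:qCV-1}. After renaming $l_i\mapsto n_{i+1}$, the new smallest index $n_1$ is $l_0$ and the new largest index $n_m$ is the shifted $l_{m-1}$, not $v$. The correct picture is thus ``shift, append a binomial at the bottom, close a double sum,'' rather than ``identify $v$ with the top index and close a single $u$-sum.''
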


\begin{proof}[Proof of the base case]
	Recall that
	\begin{align*}
		\cV_N^{(1)}(a,t,q) &= \sum_{u_1,v_1\ge 0} \frac{a^{u_1+v_1} t^{u_1} q^{u_1^2 + u_1v_1 + v_1} (t)_{v_1} (a^{-1}t)_{v_1}}{(q)_{N-u_1} (q)_{u_1} (aq)_{u_1} (q)_{v_1}}\\
		&= \sum_{v_1\ge 0} \frac{a^{v_1} q^{v_1} (t)_{v_1} (a^{-1}t)_{v_1}}{(q)_{v_1}} \sum_{u_1\ge 0} \frac{a^{u_1} t^{u_1} q^{u_1^2 + v_1 u_1}}{(q)_{N-u_1} (q)_{u_1} (aq)_{u_1}}.
	\end{align*}
	For the inner sum over $u_1$, we have
	\begin{align*}
		\sum_{u_1\ge 0} \frac{a^{u_1} t^{u_1} q^{u_1^2 + v_1 u_1}}{(q)_{N-u_1} (q)_{u_1} (aq)_{u_1}} = \frac{1}{(q)_N} \lim_{\tau\to 0} {}_{2}\phi_{1} \left(\begin{matrix}
			1/\tau,q^{-N}\\
			aq
		\end{matrix};q,atq^{N+v_1+1}\tau\right).
	\end{align*}
	We temporarily assume $|t|<1$ to ensure the convergence condition for the application of Heine's third transformation \eqref{eq:Heine3} to the ${}_{2}\phi_{1}$ series, especially when $v_1 = 0$. Then,
	\begin{align*}
		\sum_{u_1\ge 0} \frac{a^{u_1} t^{u_1} q^{u_1^2 + v_1 u_1}}{(q)_{N-u_1} (q)_{u_1} (aq)_{u_1}} &= \frac{1}{(q)_N}\cdot (tq^{v_1})_\infty {}_{2}\phi_{1} \left(\begin{matrix}
			0,aq^{N+1}\\
			aq
		\end{matrix};q,tq^{v_1}\right)\\
		&= \frac{(t)_\infty}{(q)_N (t)_{v_1}} \sum_{u_1\ge 0} \frac{t^{u_1}q^{u_1v_1}(aq^{N+1})_{u_1}}{(q)_{u_1} (aq)_{u_1}}.
	\end{align*}
	It follows that
	\begin{align*}
		\cV_N^{(1)}(a,t,q) &= \frac{(t)_\infty}{(q)_N} \sum_{u_1\ge 0} \frac{t^{u_1} (aq^{N+1})_{u_1}}{(q)_{u_1} (aq)_{u_1}} \sum_{v_1\ge 0} \frac{a^{v_1} q^{(u_1+1)v_1} (a^{-1}t)_{v_1}}{(q)_{v_1}}\\
		\text{\tiny (by \eqref{eq:q-binomial})}&=  \frac{(t)_\infty}{(q)_N} \sum_{u_1\ge 0} \frac{t^{u_1} (aq^{N+1})_{u_1}}{(q)_{u_1} (aq)_{u_1}} \frac{(tq^{u_1+1})_\infty}{(aq^{u_1+1})_\infty}\\
		&= \frac{(t)_\infty (tq)_\infty}{(aq)_\infty (q)_N} \sum_{u_1\ge 0} \frac{t^{u_1} (aq^{N+1})_{u_1}}{(q)_{u_1} (tq)_{u_1}}\\
		&= \frac{(t)_\infty (tq)_\infty}{(aq)_\infty (q)_N} \lim_{\tau\to 0} {}_{2}\phi_{1} \left(\begin{matrix}
			aq^{N+1},tq\tau\\
			tq
		\end{matrix};q,t\right)\\
		\text{\tiny (by \eqref{eq:Heine1})} &= \frac{(t)_\infty (tq)_\infty}{(aq)_\infty (q)_N} \lim_{\tau\to 0} \frac{(tq\tau)_\infty(atq^{N+1})_\infty}{(tq)_\infty(t)_\infty} {}_{2}\phi_{1} \left(\begin{matrix}
			1/\tau,t\\
			atq^{N+1}
		\end{matrix};q,tq\tau\right)\\
		&= \frac{(atq)_\infty}{(aq)_\infty (q)_N} \sum_{n_1\ge 0} \frac{(-1)^{n_1} t^{n_1} q^{\binom{n_1+1}{2}} (t)_{n_1}}{(q)_{n_1} (atq)_{N+n_1}}.
	\end{align*}
	Finally, this relation can be analytically continued from $|t|<1$, as assumed earlier, to $t\in \mathbb{C}$. Therefore, we arrive at \eqref{eq:V-expression} for $m=1$.
\end{proof}

\begin{proof}[Proof of the inductive step]
	Now let us assume \eqref{eq:V-expression} for a certain $m-1$ with $m\ge 2$ so that
	\begin{align*}
		\cV_u^{(m-1)}(a,tq^v,q) &= \frac{(atq^{v+1})_\infty}{(aq)_\infty(q)_u} \sum_{n_1,\ldots,n_{m-1}\ge 0} (-1)^{n_{m-1}} t^{-n_{m-1}+\sum_{i=1}^{m-1} 2n_i}\\
		&\quad\times  \frac{q^{-\binom{n_{m-1}}{2}-vn_{m-1}+\sum_{i=1}^{m-1} (n_i^2 + 2vn_i)}(tq^v)_{n_{m-1}}}{(q)_{n_{m-1}-n_{m-2}}\cdots (q)_{n_2-n_1}(q)_{n_{1}}(atq^{v+1})_{u+n_1}}.
	\end{align*}
	Applying this relation to \eqref{eq:V-rec}, we have
	\begin{align*}
		\cV_N^{(m)}(a,t,q) &= \frac{(atq)_\infty}{(aq)_\infty} \sum_{u,v\ge 0} \frac{(-1)^v (at)^{u+v} q^{u^2+uv+\binom{v+1}{2}} (a^{-1}t)_v}{(q)_{N-u} (q)_u (q)_v}\\
		&\quad\times \sum_{n_1,\ldots,n_{m-1}\ge 0} (-1)^{n_{m-1}+v} t^{-(n_{m-1}+v) + \sum_{i=1}^{m-1} 2(n_i+v)}\\
		&\quad\times \frac{q^{-\binom{n_{m-1}+v}{2}+\sum_{i=1}^{m-1} (n_i+v)^2} (t)_{n_{m-1}+v}}{(q)_{n_{m-1}-n_{m-2}}\cdots (q)_{n_2-n_1}(q)_{n_{1}} (atq)_{u+(n_1+v)}}.
	\end{align*}
	For each $i$ with $1\le i\le m-1$, we make the substitution $l_i := n_i+v$ and derive that
	\begin{align*}
		\cV_N^{(m)}(a,t,q) &= \frac{(atq)_\infty}{(aq)_\infty} \sum_{l_1,\ldots,l_{m-1}\ge 0} \frac{(-1)^{l_{m-1}} t^{-l_{m-1} + \sum_{i=1}^{m-1} 2l_i} q^{-\binom{l_{m-1}}{2}+\sum_{i=1}^{m-1} l_i^2} (t)_{l_{m-1}}}{(q)_{l_{m-1}-l_{m-2}}\cdots (q)_{l_2-l_1}}\\
		&\quad\times \sum_{u,v\ge 0} \frac{(-1)^v (at)^{u+v} q^{u^2+uv+\binom{v+1}{2}} (a^{-1}t)_v}{(q)_{N-u} (q)_{l_{1}-v} (atq)_{l_1+u} (q)_u (q)_v}.
	\end{align*}
	It remains to show that
	\begin{align}\label{eq:induction-key}
		\sum_{u,v\ge 0} \frac{(-1)^v (at)^{u+v} q^{u^2+uv+\binom{v+1}{2}} (a^{-1}t)_v}{(q)_{N-u} (q)_{l_{1}-v} (atq)_{l_1+u} (q)_u (q)_v} = \frac{1}{(q)_N} \sum_{l_0\ge 0} \frac{t^{2l_0} q^{l_0^2}}{(q)_{l_1-l_0} (q)_{l_0} (atq)_{N+l_0}}.
	\end{align}
	Once it has been done, we only need to rename the indices by $l_i\mapsto n_{i+1}$ for $0\le i\le m-1$ to conclude \eqref{eq:V-expression}. Now for the proof of \eqref{eq:induction-key}, we proceed in a similar way to that for \cite[eq.~(6.4)]{shane2024multiple}. Note that
	\begin{align*}
		\LHS\eqref{eq:induction-key} = \sum_{u\ge 0} \frac{(at)^{u} q^{u^2}}{(q)_{N-u} (atq)_{l_1+u} (q)_u} \sum_{v\ge 0} \frac{(-1)^v (at)^{v} q^{\binom{v}{2}+(u+1)v} (a^{-1}t)_v}{(q)_{l_{1}-v} (q)_v}.
	\end{align*}
	For the inner sum, we have
	\begin{align*}
		\sum_{v\ge 0} \frac{(-1)^v (at)^{v} q^{\binom{v}{2}+(u+1)v} (a^{-1}t)_v}{(q)_{l_{1}-v} (q)_v} &= \frac{1}{(q)_{l_1}} \lim_{\tau\to 0} {}_{2}\phi_{1} \left(\begin{matrix}
			a^{-1}t,q^{-l_1}\\
			t^2q^{u+1}\tau
		\end{matrix};q,atq^{l_1+u+1}\right)\\
		\text{\tiny (by \eqref{eq:Heine2})} &= \frac{(atq^{u+1})_\infty}{(atq^{l_1+u+1})_\infty} \sum_{l_0\ge 0} \frac{t^{2l_0}q^{l_0^2+ul_0}}{(q)_{l_1-l_0}(q)_{l_0}(atq^{u+1})_{l_0}}.
	\end{align*}
	Therefore,
	\begin{align*}
		\LHS\eqref{eq:induction-key} &= \sum_{l_0\ge 0} \frac{t^{2l_0}q^{l_0^2}}{(q)_{l_1-l_0}(q)_{l_0}} \sum_{u\ge 0} \frac{(at)^{u} q^{u^2}}{(q)_{N-u} (atq)_{l_0+u} (q)_u}\\
		&= \frac{1}{(q)_N} \sum_{l_0\ge 0} \frac{t^{2l_0}q^{l_0^2}}{(q)_{l_1-l_0}(q)_{l_0} (atq)_{l_0}} \lim_{\tau\to 0} {}_{2}\phi_{1} \left(\begin{matrix}
			1/\tau,q^{-N}\\
			atq^{l_0+1}
		\end{matrix};q,atq^{N+l_0+1}\tau\right)\\
		\text{\tiny (by \eqref{eq:qCV-1})} &= \frac{1}{(q)_N} \sum_{l_0\ge 0} \frac{t^{2l_0}q^{l_0^2}}{(q)_{l_1-l_0}(q)_{l_0} (atq)_{l_0}} \frac{1}{(atq^{l_0+1};q)_N},
	\end{align*}
	as requested.
\end{proof}

\subsection{The $a$-independence}

From now on, we complete the proof of Theorem~\ref{th:X-a-indep}. To achieve this goal, we consider the cases $m=1$ and $m\ge 2$ separately.

\subsubsection{The case $m=1$}

In view of \eqref{eq:X-new},
\begin{align*}
	&\cX_N^{(1)}(a,t,q)\\
	&\quad= \frac{(aq)_\infty(t^2q)_\infty}{(tq)_\infty(atq^{N+1})_\infty}\sum_{u_1,v_1\ge 0} \frac{a^{u_1+v_1} t^{u_1} q^{u_1^2 + u_1v_1 + v_1} (t)_{v_1} (a^{-1}t)_{v_1}}{(q)_{N-u_1} (t^2q)_{N+v_1} (q)_{u_1} (aq)_{u_1} (q)_{v_1}}\\
	&\quad= \frac{(aq)_\infty(t^2q)_\infty}{(tq)_\infty(atq^{N+1})_\infty}\frac{1}{(t^2q)_N} \sum_{u_1\ge 0} \frac{a^{u_1} t^{u_1} q^{u_1^2}}{(q)_{N-u_1} (q)_{u_1} (aq)_{u_1}} {}_{2}\phi_{1} \left(\begin{matrix}
		t,a^{-1}t\\
		t^2q^{N+1}
	\end{matrix};q,aq^{u_1+1}\right).
\end{align*}
For the ${}_{2}\phi_{1}$ series, we apply Heine's second transformation \eqref{eq:Heine2} and find that it equals
\begin{align*}
	\frac{(atq^{N+1})_\infty(tq^{u_1+1})_\infty}{(t^2q^{N+1})_\infty(aq^{u_1+1})_\infty} \sum_{v_1\ge 0} \frac{(at)^{v_1} q^{(N+1)v_1} (q^{u_1-N})_{v_1}(a^{-1}t)_{v_1}}{(q)_{v_1}(tq^{u_1+1})_{v_1}}.
\end{align*}
Thus,
\begin{align*}
	\cX_N^{(1)}(a,t,q) = \sum_{u_1,v_1\ge 0} \frac{(-1)^{v_1} (at)^{u_1+v_1} q^{u_1^2 + u_1v_1 + \binom{v_1+1}{2}} (a^{-1}t)_{v_1}}{(q)_{N-u_1-v_1} (q)_{u_1} (q)_{v_1} (tq)_{u_1+v_1}}.
\end{align*}
To see the $a$-independence, we shall show
\begin{align}\label{eq:X1-key}
	\sum_{u,v\ge 0} \frac{(-1)^{v} (at)^{u+v} q^{u^2 + uv + \binom{v+1}{2}} (a^{-1}t)_{v}}{(q)_{N-u-v} (q)_{u} (q)_{v} (tq)_{u+v}} = \sum_{n\ge 0} \frac{t^{2n} q^{n^2}}{(q)_{N-n}(q)_n(tq)_n}.
\end{align}
Making the change of indices $n:=u+v$, we have
\begin{align*}
	\LHS\eqref{eq:X1-key} &= \sum_{n\ge 0} \frac{(at)^n q^{n^2}}{(q)_{N-n}(tq)_n} \sum_{v\ge 0} \frac{(-1)^v q^{\binom{v}{2} + (1-n)v} (a^{-1}t)_v}{(q)_{n-v} (q)_v}\\
	&= \sum_{n\ge 0} \frac{(at)^n q^{n^2}}{(q)_{N-n}(tq)_n} \frac{1}{(q)_n} {}_{2}\phi_{1} \left(\begin{matrix}
		a^{-1}t, q^{-n}\\
		0
	\end{matrix};q,q\right)\\
	\text{\tiny (by \eqref{eq:qCV-1})} &= \sum_{n\ge 0} \frac{t^{2n} q^{n^2}}{(q)_{N-n}(q)_n(tq)_n},
\end{align*}
as desired.

\subsubsection{The cases $m\ge 2$}

We start by noticing that through the same strategy as we prove Lemma~\ref{le:V-rec}, the following relation is clear.

\begin{lemma}\label{le:X-rec}
	For $m\ge 2$,
	\begin{align}\label{eq:X-rec}
		\cX_N^{(m)}(a,t,q)&= \frac{(aq)_\infty(t^2q)_\infty}{(tq)_\infty(atq^{N+1})_\infty} \sum_{u,v\ge 0} a^{u+v} t^{u+2(m-1)v} q^{u^2+(u+1)v + (m-1)v^2} \notag\\
		&\quad\times\frac{(t)_v (a^{-1}t)_v}{(q)_{N-u} (t^2q)_{N+v} (q)_v} \cV_u^{(m-1)}(a,t q^v,q).
	\end{align}
\end{lemma}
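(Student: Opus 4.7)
The plan is to follow the authors' explicit hint and mirror the strategy used for the recursion of $\cV_N^{(m)}$ in Lemma~\ref{le:V-rec}. My starting point is the reformulation \eqref{eq:X-new} of $\cX_N^{(m)}(a,t,q)$. The summand of \eqref{eq:X-new} differs from that of the definition \eqref{eq:V-def} of $\cV_N^{(m)}(a,t,q)$ in exactly two ways: (i) an overall index-free prefactor $\frac{(aq)_\infty(t^2q)_\infty}{(tq)_\infty(atq^{N+1})_\infty}$, which coincides with the prefactor appearing on the right-hand side of \eqref{eq:X-rec} and therefore factors out at the end; and (ii) a single additional denominator factor $(t^2q)_{N+v_m}$, whose dependence on summation variables is limited to the outermost index $v_m$.

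First I would isolate the outermost sums by setting $u = u_m$ and $v = v_m$, exactly as in the proof of Lemma~\ref{le:V-rec}. This extracts the monomial $a^u t^{u+2v} q^{u^2+uv+v^2}$ together with the pure-$(u,v)$ denominator pieces $1/(q)_{N-u}$ and $1/(q)_v$ (after telescoping the outermost $q$-binomials $\qbinom{u_m}{u_{m-1}}$ and $\qbinom{v_{m-1}}{v_m}$). Crucially, since $(t^2q)_{N+v}$ depends only on $v$, it stays in the outer sum as the factor $1/(t^2q)_{N+v}$ displayed in \eqref{eq:X-rec}.

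Next I would perform the shift $v_i \mapsto v_i + v$ for $i = 1, \ldots, m-1$, which is the identical reindexing used in Lemma~\ref{le:V-rec}. Tracking the exponents, the $t$-weight splits as $t^{-2v_1+\sum_{i<m}(u_i+2v_i)}$ (to be absorbed into the inner sum) plus an outer contribution $t^{2(m-1)v}$; the $q$-weight splits analogously, leaving the inner $q$-exponent unchanged while producing an outer contribution $q^{(u+1)v + (m-1)v^2}$ (when combined with the $q^{v^2}$ produced in the extraction step). The factorizations $(t)_{v_1+v} = (t)_v (tq^v)_{v_1}$ and $(a^{-1}t)_{v_1+v} = (a^{-1}t)_v (a^{-1}tq^v)_{v_1}$ extract the pure-$v$ Pochhammers $(t)_v(a^{-1}t)_v$ outside and simultaneously implement the substitution $t \mapsto tq^v$ inside, while the boundary factor $(q)_{v_{m-1}-v}$ becomes $(q)_{v_{m-1}}$ after the shift, matching the boundary expected from $\cV^{(m-1)}$.

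The residual inner $(m-1)$-fold sum then matches the definition of $\cV_u^{(m-1)}(a,tq^v,q)$ verbatim — in particular, no stray $(t^2q)$-factor descends into the inner sum since the only one in sight sits exclusively in the outer summand. Reinstating the prefactor gives precisely \eqref{eq:X-rec}. The only real ``obstacle'' is bookkeeping: carefully tracking the $t$- and $q$-exponents through the shift so that the inner sum reorganizes cleanly into a $\cV^{(m-1)}$ with base $tq^v$; this calculation is a direct parallel of the corresponding step in Lemma~\ref{le:V-rec}, and no new $q$-hypergeometric transformation is required.
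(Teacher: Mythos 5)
Your proposal is correct and takes exactly the approach the paper intends: the paper's ``proof'' of Lemma~\ref{le:X-rec} is the single sentence that the relation follows by the same strategy as Lemma~\ref{le:V-rec}, and your write-up fills in precisely the right detail --- most importantly the observation that the only new denominator factor $(t^2q)_{N+v_m}$ depends solely on the outermost index $v_m=v$, so it rides along in the outer sum while the extraction of $u=u_m$, $v=v_m$ and the shift $v_i\mapsto v_i+v$ (together with the Pochhammer split $(t)_{v_1+v}=(t)_v(tq^v)_{v_1}$ and its $(a^{-1}t)$-analogue) reconstitute $\cV_u^{(m-1)}(a,tq^v,q)$ verbatim.
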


Now we plug \eqref{eq:V-expression} into \eqref{eq:X-rec} and further change the indices $l_i := n_i+v$ for each $1\le i\le m-1$. Thus,
\begin{align}\label{eq:X-rec-new}
	&\cX_N^{(m)}(a,t,q)\notag\\
	&\quad= \frac{(t^2q)_\infty(atq)_N}{(tq)_\infty} \sum_{l_1,\ldots,l_{m-1}\ge 0} \frac{(-1)^{l_{m-1}} t^{-l_{m-1} + \sum_{i=1}^{m-1} 2l_i} q^{-\binom{l_{m-1}}{2}+\sum_{i=1}^{m-1} l_i^2} (t)_{l_{m-1}}}{(q)_{l_{m-1}-l_{m-2}}\cdots (q)_{l_2-l_1}}\notag\\
	&\quad\quad\times \sum_{u,v\ge 0} \frac{(-1)^v (at)^{u+v} q^{u^2+uv+\binom{v+1}{2}} (a^{-1}t)_v}{(q)_{N-u} (t^2q)_{N+v} (q)_{l_{1}-v} (atq)_{l_1+u} (q)_u (q)_v}.
\end{align}
To see the $a$-independence, it is sufficient to show
\begin{align}\label{eq:a-indep-key}
	&(atq)_N \sum_{u,v\ge 0} \frac{(-1)^v (at)^{u+v} q^{u^2+uv+\binom{v+1}{2}} (a^{-1}t)_v}{(q)_{N-u} (t^2q)_{N+v} (q)_{l_{1}-v} (atq)_{l_1+u} (q)_u (q)_v}\notag\\
	&\qquad\qquad\qquad\qquad\qquad\qquad\qquad = \frac{1}{(t^2q)_{N+l_1}} \sum_{l_0\ge 0} \frac{t^{2l_0} q^{l_0^2}}{(q)_{N-l_0} (q)_{l_1-l_0} (q)_{l_0}}.
\end{align}
Note that
\begin{align*}
	\LHS\eqref{eq:a-indep-key} = (atq)_N \sum_{u\ge 0} \frac{(at)^{u} q^{u^2}}{(q)_{N-u} (atq)_{l_1+u} (q)_u} \sum_{v\ge 0} \frac{(-1)^v (at)^{v} q^{\binom{v}{2}+(u+1)v} (a^{-1}t)_v}{(q)_{l_{1}-v} (q)_v (t^2q)_{N+v}}.
\end{align*}
For the inner sum, we have
\begin{align*}
	&\!\!\!\!\!\!\!\!\sum_{v\ge 0} \frac{(-1)^v (at)^{v} q^{\binom{v}{2}+(u+1)v} (a^{-1}t)_v}{(q)_{l_{1}-v} (q)_v (t^2q)_{N+v}}\\
	&= \frac{1}{(q)_{l_1} (t^2q)_N} {}_{2}\phi_{1} \left(\begin{matrix}
		a^{-1}t,q^{-l_1}\\
		t^2q^{N+1}
	\end{matrix};q,atq^{l_1+u+1}\right)\\
	\text{\tiny (by \eqref{eq:Heine2})} &= \frac{(atq^{u+1})_\infty (q)_{N-u}}{(atq^{l_1+u+1})_\infty (t^2q)_{N+l_1}} \sum_{v\ge 0} \frac{t^{2v} q^{v^2+uv}}{(q)_{N-u-v}(q)_{l_1-v}(q)_v(atq^{u+1})_{v}}.
\end{align*}
Therefore,
\begin{align*}
	\LHS\eqref{eq:a-indep-key} = \frac{(atq)_N}{(t^2q)_{N+l_1}} \sum_{u,v\ge 0} \frac{a^u t^{u+2v} q^{u^2+uv+v^2}}{(q)_{N-u-v} (q)_{l_1-v} (q)_u (q)_v (atq)_{u+v}}.
\end{align*}
Now we single out the sum over $u$,
\begin{align*}
	\LHS\eqref{eq:a-indep-key} = \frac{(atq)_N}{(t^2q)_{N+l_1}} \sum_{v\ge 0} \frac{t^{2v} q^{v^2}}{(q)_{l_1-v} (q)_v (atq)_{v}} \sum_{u\ge 0} \frac{(at)^u q^{u^2 + vu}}{(q)_{(N-v)-u} (q)_u (atq^{v+1})_u}.
\end{align*}
Then,
\begin{align*}
	\sum_{u\ge 0} \frac{(at)^u q^{u^2 + vu}}{(q)_{(N-v)-u} (q)_u (atq^{v+1})_u} &= \frac{1}{(q)_{N-v}} \lim_{\tau\to 0} {}_{2}\phi_{1} \left(\begin{matrix}
		1/\tau,q^{-(N-v)}\\
		atq^{v+1}
	\end{matrix};q,atq^{N+1}\tau\right)\\
	\text{\tiny (by \eqref{eq:qCV-1})} &= \frac{1}{(q)_{N-v} (atq^{v+1})_{N-v}}.
\end{align*}
It follows that
\begin{align*}
	\LHS\eqref{eq:a-indep-key} = \frac{(atq)_N}{(t^2q)_{N+l_1}} \sum_{v\ge 0} \frac{t^{2v} q^{v^2}}{(q)_{l_1-v} (q)_v (atq)_{v}} \frac{1}{(q)_{N-v} (atq^{v+1})_{N-v}},
\end{align*}
which gives \eqref{eq:a-indep-key} by renaming $v$ by $l_0$. Finally, we apply \eqref{eq:a-indep-key} to \eqref{eq:X-rec-new} and arrive at the following $a$-independent expression for $\cX_N^{(m)}(a,t,q)$.

\begin{theorem}
	For $m\ge 2$,
	\begin{align}\label{eq:X-exp-2}
		\cX_N^{(m)}(a,t,q) &= \frac{(t^2q)_\infty}{(tq)_\infty} \sum_{n_1,\ldots,n_m\ge 0} (-1)^{n_m} t^{-n_m+\sum_{i=1}^m 2n_i} q^{-\binom{n_m}{2}+\sum_{i=1}^m n_i^2} \notag\\
		&\quad\times \frac{(t)_{n_m}}{(t^2q)_{N+n_2}(q)_{N-n_1}(q)_{n_m}} \qbinom{n_m}{n_{m-1}}\qbinom{n_{m-1}}{n_{m-2}}\cdots \qbinom{n_2}{n_1}.
	\end{align}
\end{theorem}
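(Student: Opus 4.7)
The plan is pure substitution: combine the two identities \eqref{eq:X-rec-new} and \eqref{eq:a-indep-key} already in hand. The reformulation \eqref{eq:X-rec-new} has already displayed $\cX_N^{(m)}(a,t,q)$ as an outer $(m-1)$-fold sum over indices $l_1,\ldots,l_{m-1}$ whose integrand is the prefactor $(atq)_N$ times an inner double sum over $(u,v)$. Identity \eqref{eq:a-indep-key}, just proved, evaluates precisely that quantity as $\frac{1}{(t^2q)_{N+l_1}}\sum_{l_0\ge 0}\frac{t^{2l_0}q^{l_0^2}}{(q)_{N-l_0}(q)_{l_1-l_0}(q)_{l_0}}$, which is manifestly independent of $a$. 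Plugging this into \eqref{eq:X-rec-new} cancels the prefactor $(atq)_N$, eliminates every remaining occurrence of $a$, and turns the formula into an $m$-fold sum over the enlarged index set $l_0,l_1,\ldots,l_{m-1}$.

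The remaining work is bookkeeping. Relabel $n_{i+1}:=l_i$ for $0\le i\le m-1$, so that $l_0\mapsto n_1$ and $l_{m-1}\mapsto n_m$. Under this renaming the $t$-exponent $-l_{m-1}+2\sum_{i=1}^{m-1}l_i+2l_0$ becomes $-n_m+2\sum_{i=1}^m n_i$, the $q$-exponent $-\binom{l_{m-1}}{2}+\sum_{i=1}^{m-1}l_i^2+l_0^2$ becomes $-\binom{n_m}{2}+\sum_{i=1}^m n_i^2$, the sign $(-1)^{l_{m-1}}$ becomes $(-1)^{n_m}$, the Pochhammer $(t)_{l_{m-1}}$ becomes $(t)_{n_m}$, and the denominator $(t^2q)_{N+l_1}(q)_{N-l_0}$ becomes $(t^2q)_{N+n_2}(q)_{N-n_1}$. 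The surviving product of differences $(q)_{l_0}(q)_{l_1-l_0}\prod_{j=2}^{m-1}(q)_{l_j-l_{j-1}}$ telescopes, via the elementary identity $\frac{1}{(q)_a(q)_{b-a}}=\frac{\qbinom{b}{a}}{(q)_b}$ applied iteratively, into $\frac{1}{(q)_{n_m}}\qbinom{n_m}{n_{m-1}}\cdots\qbinom{n_2}{n_1}$, producing exactly the chain of $q$-binomials required by \eqref{eq:X-exp-2}.

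Since both of the inputs \eqref{eq:X-rec-new} and \eqref{eq:a-indep-key} have already been verified in the preceding subsections, no genuinely new $q$-series identity needs to be discovered here and no serious obstacle remains; the hardest bit is merely keeping the index renaming and the telescoping of Pochhammers consistent. The theorem follows as a direct combination of the two identities together with this relabeling.
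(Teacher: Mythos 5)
Your proposal is correct and is exactly the route the paper takes: the paper simply states ``apply \eqref{eq:a-indep-key} to \eqref{eq:X-rec-new}'' and records the resulting formula, and you have filled in the same substitution, the index renaming $n_{i+1}=l_i$, and the telescoping of the product $(q)_{n_1}(q)_{n_2-n_1}\cdots(q)_{n_m-n_{m-1}}$ into $(q)_{n_m}\big/\!\prod\qbinom{n_{j+1}}{n_j}$, all of which check out.
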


\subsection{Further implications}

With the proof of the $a$-independence for our series $\cX_N^{(m)}(a,t,q)$ complete, we consider some further implications. Clearly, there is a free choice of $a$, and in particular, taking $a=0$ implies the nice expression in \eqref{eq:X-nice-exp},
\begin{align}\label{eq:X-nice-exp-repeat}
	\cX_N^{(m)}(a,t,q) = \sum_{n_1,\ldots,n_m\ge 0} \frac{t^{\sum_{i=1}^m 2n_i} q^{\sum_{i=1}^m n_i^2}}{(q)_{N-n_m} (q)_{n_{m}-n_{m-1}}\cdots (q)_{n_2-n_1}(q)_{n_1}(tq)_{n_1}}.
\end{align}
Combining \eqref{eq:X-exp-2} and \eqref{eq:X-nice-exp-repeat}, we have the following intriguing relation.

\begin{corollary}
	For $m\ge 2$,
	\begin{align}
		&\sum_{n_1,\ldots,n_m\ge 0} \frac{(-1)^{n_m} t^{-n_m+\sum_{i=1}^m 2n_i} q^{-\binom{n_m}{2}+\sum_{i=1}^m n_i^2} (t)_{n_m}}{(t^2q)_{N+n_2}(q)_{N-n_1}(q)_{n_m}} \qbinom{n_m}{n_{m-1}}\qbinom{n_{m-1}}{n_{m-2}}\cdots \qbinom{n_2}{n_1}\notag\\
		&\quad = \frac{(tq)_\infty}{(t^2q)_\infty} \sum_{n_1,\ldots,n_m\ge 0} \frac{t^{\sum_{i=1}^m 2n_i} q^{\sum_{i=1}^m n_i^2}}{(q)_{N-n_m} (q)_{n_{m}-n_{m-1}}\cdots (q)_{n_2-n_1}(q)_{n_1}(tq)_{n_1}}.
	\end{align}
\end{corollary}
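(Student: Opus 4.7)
The plan is to combine the two distinct closed-form evaluations of $\cX_N^{(m)}(a,t,q)$ that have already been established in this section. Equation \eqref{eq:X-exp-2} provides one such expression, derived through the recursive reformulation via the auxiliary series $\cV_N^{(m)}$ together with a sequence of $q$-hypergeometric transformations, valid for $m \ge 2$. Equation \eqref{eq:X-nice-exp-repeat} provides another, obtained as the $a = 0$ specialization of the defining multisum \eqref{eq:X-def-new}. Since both represent the same $a$-independent quantity by Theorem~\ref{th:X-a-indep}, they must be equal; transposing the prefactor $(t^2q)_\infty/(tq)_\infty$ of \eqref{eq:X-exp-2} then yields the claimed identity.

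To confirm that \eqref{eq:X-nice-exp-repeat} is indeed the $a = 0$ specialization of \eqref{eq:X-def-new}, observe that the factor $a^{\sum_{i=1}^m s_i}$ forces $s_1 = \cdots = s_m = 0$, after which the prefactors $(atq)_N$, $(aq)_{r_1}$, $(atq)_{r_1}$, and $(aq)_{s_1}$ all collapse to $1$, and each $q$-binomial reduces to $\qbinom{r_i}{r_i} = 1$. What remains is precisely the $m$-fold sum displayed on the right of \eqref{eq:X-nice-exp-repeat}. Reading off the left-hand side of the Corollary directly from \eqref{eq:X-exp-2} then completes the argument.

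The notable feature here is that no additional $q$-theoretic input is required: the Corollary is an immediate byproduct of the $a$-independence asserted in Theorem~\ref{th:X-a-indep}, and the only genuine obstacle --- already overcome in the preceding subsections --- was establishing the two representations themselves. The resulting identity is nonetheless striking, since the left-hand side is a signed $m$-fold sum with shifted Pochhammer denominators $(t^2q)_{N+n_2}(q)_{N-n_1}$ and an extra factor $(t)_{n_m}$, whereas the right-hand side is a manifestly positive Andrews--Gordon-type multisum with the standard denominator pattern $(q)_{N-n_m}\cdots (q)_{n_1}(tq)_{n_1}$.
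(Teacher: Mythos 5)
Your proposal is correct and matches the paper's argument exactly: the paper likewise obtains \eqref{eq:X-nice-exp-repeat} by setting $a=0$ in \eqref{eq:X-def-new} and then combines it with \eqref{eq:X-exp-2} to deduce the Corollary.
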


In addition, it has been shown in \cite[Theorem~8.1]{shane2024multiple} that for $m\ge 2$,
\begin{align*}
	\cX_N^{(m)}(1,t,q) &= \frac{(t^2q)_\infty}{(tq)_\infty (q)_N} \sum_{n_1,\ldots,n_m\ge 0} (-1)^{n_m} t^{-n_m+\sum_{i=1}^m 2n_i} q^{-\binom{n_m}{2}+\sum_{i=1}^m n_i^2} \notag\\
	&\quad\times \frac{(t)_{n_m}}{(t^2q)_{N+n_1}(q)_{n_m}} \qbinom{n_m}{n_{m-1}}\qbinom{n_{m-1}}{n_{m-2}}\cdots \qbinom{n_2}{n_1}.
\end{align*}
Again, the $a$-independence for $\cX_N^{(m)}(a,t,q)$ gives us the following equality, which was shown earlier by the first author using a complicated method (see the proof of \cite[Theorem~8.2]{shane2024multiple}); now the $a$-independence sheds light on it in an alternative way.

\begin{corollary}
	For $m\ge 2$,
	\begin{align}
		&\sum_{n_1,\ldots,n_m\ge 0} \frac{(-1)^{n_m} t^{-n_m+\sum_{i=1}^m 2n_i} q^{-\binom{n_m}{2}+\sum_{i=1}^m n_i^2} (t)_{n_m}}{(t^2q)_{N+n_1}(q)_{n_m}} \qbinom{n_m}{n_{m-1}}\qbinom{n_{m-1}}{n_{m-2}}\cdots \qbinom{n_2}{n_1}\notag\\
		&\quad = \frac{(tq)_\infty (q)_N}{(t^2q)_\infty} \sum_{n_1,\ldots,n_m\ge 0} \frac{t^{\sum_{i=1}^m 2n_i} q^{\sum_{i=1}^m n_i^2}}{(q)_{N-n_m} (q)_{n_{m}-n_{m-1}}\cdots (q)_{n_2-n_1}(q)_{n_1}(tq)_{n_1}}.
	\end{align}
\end{corollary}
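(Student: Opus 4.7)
The plan is to exploit the $a$-independence of $\cX_N^{(m)}(a,t,q)$ established in Theorem~\ref{th:X-a-indep}, by comparing two distinct closed-form evaluations of this $2m$-fold multisum arising from two particular specializations of the ghost parameter.

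First, I would record the two evaluations of $\cX_N^{(m)}(a,t,q)$ to be compared. On one hand, setting $a=0$ in the $a$-independent expression \eqref{eq:X-nice-exp} (equivalently \eqref{eq:X-nice-exp-repeat}) gives
\begin{equation*}
	\cX_N^{(m)}(a,t,q) = \sum_{n_1,\ldots,n_m\ge 0} \frac{t^{\sum_{i=1}^m 2n_i} q^{\sum_{i=1}^m n_i^2}}{(q)_{N-n_m} (q)_{n_{m}-n_{m-1}}\cdots (q)_{n_2-n_1}(q)_{n_1}(tq)_{n_1}}.
\end{equation*}
On the other hand, the specialization at $a=1$, as already computed in \cite[Theorem~8.1]{shane2024multiple} and recalled just before the statement, reads
\begin{align*}
	\cX_N^{(m)}(1,t,q) &= \frac{(t^2q)_\infty}{(tq)_\infty (q)_N} \sum_{n_1,\ldots,n_m\ge 0} (-1)^{n_m} t^{-n_m+\sum_{i=1}^m 2n_i} q^{-\binom{n_m}{2}+\sum_{i=1}^m n_i^2}\\
	&\quad \times \frac{(t)_{n_m}}{(t^2q)_{N+n_1}(q)_{n_m}} \qbinom{n_m}{n_{m-1}}\qbinom{n_{m-1}}{n_{m-2}}\cdots \qbinom{n_2}{n_1}.
\end{align*}

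Second, invoking Theorem~\ref{th:X-a-indep}, both of these expressions equal $\cX_N^{(m)}(a,t,q)$ for any value of $a$, so in particular they are equal to each other. Setting them equal and multiplying through by the factor $(tq)_\infty (q)_N / (t^2q)_\infty$ (which is independent of the summation indices) immediately yields the desired identity. No further $q$-hypergeometric manipulation is required, since all the heavy lifting has been done in establishing the $a$-independence.

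There is essentially no obstacle here, as the corollary is purely a consequence of equating two specializations (at $a=0$ and $a=1$) of an $a$-independent function. The remark preceding the corollary is exactly this observation: the first author's earlier, direct proof of this identity (from \cite[Theorem~8.2]{shane2024multiple}) is more involved, and the point of the present corollary is precisely that Theorem~\ref{th:X-a-indep} provides a conceptually cleaner explanation by interpolating between the two formulas via the ghost parameter $a$.
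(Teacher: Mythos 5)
Your proposal is correct and matches the paper's reasoning exactly: both arguments equate the $a=0$ evaluation $\eqref{eq:X-nice-exp}$ with the $a=1$ evaluation from \cite[Theorem~8.1]{shane2024multiple}, using Theorem~\ref{th:X-a-indep} to justify that the two must agree. The only cosmetic quibble is that the nice expression in $\eqref{eq:X-nice-exp}$ arises from setting $a=0$ in the defining multisum $\eqref{eq:X-def}$ (which kills all terms with any $s_i>0$), rather than ``setting $a=0$ in $\eqref{eq:X-nice-exp}$'', but this does not affect the argument.
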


\section{Deformations, bona et mala}\label{sec:t-deform}

It remains to answer a \emph{conceptual} question --- \textit{How ``good'' a deformation for a multiple Rogers--Ramanujan type sum should be?}

For the Andrews--Gordon sum, the understanding is not at all controversial. Clearly, the most natural choice is
\begin{align}\label{eq:dAG-finite-t}
	\dAG_{n}^{(2m+3)} (t,q) := (q)_n \sum_{n_1,\ldots,n_m\ge 0} \frac{t^{\sum_{i=1}^m n_i} q^{\sum_{i=1}^m n_i^2}}{(q)_{n-n_m}(q)_{n_m-n_{m-1}}\cdots (q)_{n_2-n_1} (q)_{n_1}}.
\end{align}
However, this simply reduces to our $\AG_{n}^{(2m+3)}(t,q)$ by replacing $t$ with $t^2$. It is worth pointing out that $\dAG_{n}^{(2m+3)} (t,q)$ serves as one of the motivating examples for the theory of Bailey chain \cite[Section~3.5]{And1986}, and indeed we can simplify it as a single sum
\begin{align}\label{eq:dAG-simple}
	\dAG_{n}^{(2m+3)} (t,q) = (q)_n \sum_{r\ge 0} \frac{(-1)^r t^{(m+1)r} q^{\binom{r}{2}+(m+1)r^2} (1-tq^{2r}) (t)_r}{(1-t)(tq)_{n+r}(q)_{n-r}(q)_r},
\end{align}
by using a \emph{Bailey pair} relative to $(t,q)$ \cite[p.~31, eqs.~(3.47) and (3.48)]{And1986}:
\begin{align*}
	\alpha_k = \frac{(-1)^k q^{\binom{k}{2}} (1-tq^{2k}) (t)_k}{(1-t)(q)_k} \qquad\text{and}\qquad \beta_k = \begin{cases}
		1, & k=0,\\
		0, & k\ge 1.
	\end{cases}
\end{align*}
Such a way of simplification provides a coarse guide to the identification of good deformations on the $q$-hypergeometric side.

For the Bressoud sum, the story becomes bifurcating.

Again, from a $q$-theoretic perspective, it is the most natural to choose
\begin{align}\label{eq:dBr-finite-t}
	\dBr_n^{(2m+2)}(t,q) := (q)_n \sum_{n_1,\ldots,n_m\ge 0} \frac{t^{\sum_{i=1}^m n_i} q^{\sum_{i=1}^m n_i^2}}{(q)_{n-n_m}(q)_{n_m-n_{m-1}}\cdots (q)_{n_2-n_1} (q^2;q^2)_{n_1}}.
\end{align}
Then we have the reduction to the $m$-fold Bressoud sum by $\dBr_n^{(2m+2)}(1,q) = \Br_n^{(2m+2)}(1,q)$. Applying the following Bailey pair relative to $(t,q)$ \cite[p.~441, eq.~(D4)]{BIS2000}:
\begin{align*}
	\alpha_k = \frac{(-1)^k q^{k^2} (1-tq^{2k}) (t^2;q^2)_k}{(1-t)(q^2;q^2)_k} \qquad\text{and}\qquad \beta_k = \frac{1}{(q^2;q^2)_k},
\end{align*}
we have a single-sum simplification analogous to \eqref{eq:dAG-simple}:
\begin{align}\label{eq:dBr-simple}
	\dBr_n^{(2m+2)}(t,q) = (q)_n \sum_{r\ge 0} \frac{(-1)^r t^{mr} q^{(m+1)r^2} (1-tq^{2r}) (t^2;q^2)_r}{(1-t)(tq)_{n+r}(q)_{n-r}(q^2;q^2)_r}.
\end{align}
It is also notable that there is another slightly different $m$-fold bivariate sum, which does not reduce to the Bressoud sum $\Br_n^{(2m+2)}(1,q)$ but still has its own interest. Namely,
\begin{align}\label{eq:ddBr-finite-t}
	\ddBr_n^{(2m+2)}(t,q) := (q)_n \sum_{n_1,\ldots,n_m\ge 0} \frac{(-t)^{\sum_{i=1}^m n_i} q^{\sum_{i=1}^m n_i^2}}{(q)_{n-n_m}(q)_{n_m-n_{m-1}}\cdots (q)_{n_2-n_1} (q)_{n_1}(-tq)_{n_1}}.
\end{align}
It is true that
\begin{align}
	\ddBr_n^{(2m+2)}(t,q) = \frac{1}{(-tq)_n},
\end{align}
by invoking the trivial Bailey pair relative to $(-t,q)$:
\begin{align*}
	\alpha_k = \begin{cases}
		1, & k=0,\\
		0, & k\ge 1,
	\end{cases}\qquad\text{and}\qquad \beta_k = \frac{1}{(q)_k(-tq)_k}.
\end{align*}

On the other hand, unlike the case of Andrews--Gordon sum, in which $\AG_{n}^{(2m+3)} (t,q) = \dAG_{n}^{(2m+3)} (t^2,q)$, we no longer have a direct relation between $\Br_n^{(2m+2)}(t,q)$ and $\dBr_n^{(2m+2)}(t,q)$. Now what makes $\Br_n^{(2m+2)}(t,q)$ still a good $t$-deformation, especially noting that $\Br_n^{(2m+2)}(t,q)$ seems unable to be shortened as a single sum? The relations in \eqref{eq:coh-zeta-split} and \eqref{eq:main} give us an answer --- the \emph{geometric} meaning.

Nevertheless, it is undeniable that treatments of geometric objects are usually complicated, and often we are only led to some intermediate expressions such as that in \eqref{eq:zeta-new}. Hence, a meaningful question is how to translate the high-end geometric insights into the more practical $q$-theoretic setting.

A hint to this inquiry lies in the relation
\begin{align}
	\Br_n^{(2m+2)}(t,q) = \frac{t^{(2m-1)n} q^{\binom{n+1}{2}+(m-1)n^2} (1+t)}{1+t q^n} \Br_n^{(2m+2)}(t^{-1}q^{-n},q),
\end{align}
appearing in the earlier work of the first author \cite[Theorem~1.8]{shane2024multiple}. More importantly, such a $t\mapsto t^{-1}q^{-n}$ reflection formula is \emph{universal} in the geometric setting, as demonstrated by the second author and Jiang \cite[Theorem~7.1]{huangjiang2023torsionfree}. To be specific, letting $R$ be any arithmetic local order, and $\Omega_R$ its dualizing module, then with $\Delta:=|\tl R/R|$ the Serre invariant, the normalized Quot zeta function $\nu_{\Omega_R^n}^R(s)$ given in \eqref{eq:rational-formula-arithmetic} satisfies the \emph{reflection principle}\footnote{In particular, when $R=R_{2,2m+1}$, $R_{2,2m}$, or $R_{2,2m}'$, we have $\Omega_R=R$ and $\Delta=q^m$.}:
\begin{align}\label{eq:geo-reflection}
	\nu_{\Omega_R^n}^R(s) = \Delta^{n^2-2ns}\nu_{\Omega_R^n}^R(n-s);
\end{align}
the $t\mapsto t^{-1}q^{-n}$ rule comes from the fact that $t^{-1}q^{-n} = q^{-(n-s)}$ where we have always written $t:=q^{-s}$. As another instance, this reflective phenomenon also exists for our $t$-deformed Andrews--Gordon sum $\AG_n^{(2m+3)}(t,q)$. Replacing $t$ with $t^{-1}q^{-n}$ in \eqref{eq:AG-finite-t} and then making the substitution $n_i \mapsto n-n_{m-i+1}$ for each $i$ with $1\le i\le m$, we immediately have
\begin{align}\label{eq:AG-reflection}
	\AG_n^{(2m+3)}(t,q) = t^{2mn} q^{mn^2} \AG_n^{(2m+3)}(t^{-1}q^{-n},q).
\end{align}
In the meantime, the relation $\AG_{n}^{(2m+3)} (t,q) = \dAG_{n}^{(2m+3)} (t^2,q)$ implies that
\begin{align}
	\dAG_n^{(2m+3)}(t,q) = t^{mn} q^{mn^2} \dAG_n^{(2m+3)}(t^{-1}q^{-2n},q).
\end{align}
The agreement with the $t\mapsto t^{-1}q^{-n}$ rule in \eqref{eq:AG-reflection} makes it clear why we have chosen $\AG_n^{(2m+3)}(t,q)$ as our $t$-deformation, which involves $t^2$ instead of the more natural $t$ in the numerator of the summand.

All in all, this reflective nature may serve as another criterion for good deformations for multiple Rogers--Ramanujan type sums. In addition, once such a deformation is constructed, it is reasonable to expect it to convey a certain geometric meaning.

\section{A master interpolation}\label{sec:further-interpolations}

\subsection{A master polynomial}

The role the $q,t$-series $\nu^R_{\tl R^n}(s)$ (where $t:=q^{-s}$) plays in the geometric framework has remained a mystery. For one thing, $\nu^R_{\tl R^n}(0)=\nu^R_{R^n}(0)$ by \cite[Corollary~5.12]{huangjiang2023torsionfree}, so $\nu^R_{\tl R^n}(s)$ and $\nu^R_{R^n}(s)$ are \emph{different} $t$-deformations of the same $q$-series. In some sense, $\nu^R_{\tl R^n}(s)$ can be treated as a ``surrogate'' for the ``real'' deformation $\nu^R_{R^n}(s)$ --- the former is easier to compute and enough to give the $t=1$ specialization, but is ``worse'' due to the lack of a universal reflection principle as in \eqref{eq:geo-reflection}.

For rank $n=1$, these deformations are predicted to be ``essentially the same'' by the \emph{Hilb-vs-Quot conjecture} \cite{kivinentrinh23hilb}:
\begin{equation}\label{eq:tl-1}
	\nu^R_{R}(s) = \nu^R_{\tl R}(s) \big|_{q\mapsto tq},
\end{equation}
but such a simple relation no longer holds when $n>1$ and $R=R_{2,2m}$ as shown in \cite{huangjiang2023torsionfree}. To make things more mysterious, a \emph{different} relation holds for $R_{2,2m+1}$:
\begin{equation}\label{eq:tl-n-2m+1}
	\nu^{R_{2,2m+1}}_{R_{2,2m+1}^n}(s) = \nu^{R_{2,2m+1}}_{\tl R_{2,2m+1}^n}(s) \big|_{t\mapsto t^2},
\end{equation}
which happens to coincide with the rank-$1$ rule because $\nu^{R_{2,2m+1}}_{\tl R_{2,2m+1}^n}(s)\in \Z[tq]$. Another indication of why $\nu^R_{\tl R^n}(s)$ appears to be a more complicated object is that for $R=R_{2,2m}$ and $R_{2,2m}'$, a simple $t\mapsto -t$ relation connects these two $\nu^R_{R^n}(s)$, while the two $\nu^R_{\tl R^n}(s)$ do not seem to relate one another directly. These observations have led to a general guess that there is possibly a hidden extra variable, with some unclear geometric origin, that interpolates $\nu^R_{\tl R^n}(s)$ and $\nu^R_{R^n}(s)$; whatever it is, it must degenerate when $n=1$, and degenerate in a different way when $R=R_{2,2m+1}$.

While we are not yet able to give further geometric insights to the hidden variable, we observe certain apparent interpolations from a purely $q$-theoretic point of view. 
To be specific, we formulate the relevant Quot zeta functions in a way that seems to reveal the parallelism the most.

\begin{proposition}
	Let $R=R_{2,2m}$ and $R'=R_{2,2m}'$. Then with $z:=q^{-1}$ and $t:=q^{-s}$,
	\begin{align*}
		\nu^R_{\tl R^n}(s)&=\sum_{n_1,\dots,n_m\ge 0}  \frac{t^{mn-\sum_{i=1}^m n_i} z^{-mn^2+\sum_{i=1}^m n_i^2} (z;z)_n^2}{(z;z)_{n-n_m}(z;z)_{n_{m}-n_{m-1}}\cdots (z;z)_{n_{2}-n_1}(z;z)_{n_1}^2},\\
		\nu^{R'}_{\tl{R'}^n}(s)&=\sum_{n_1,\dots,n_m\ge 0}  \frac{t^{mn-\sum_{i=1}^m n_i} z^{-mn^2+\sum_{i=1}^m n_i^2} (z;z)_n (-z;z)_n}{(z;z)_{n-n_m}(z;z)_{n_{m}-n_{m-1}}\cdots (z;z)_{n_{2}-n_1}(z;z)_{n_1}(-z;z)_{n_1}},\\
		\nu^R_{R^n}(s)&=\sum_{n_1,\dots,n_m\ge 0}  \frac{t^{mn-\sum_{i=1}^m n_i} z^{-mn^2+\sum_{i=1}^m n_i^2} (z;z)_n (t^{-1}z;z)_n}{(z;z)_{n-n_m}(z;z)_{n_{m}-n_{m-1}}\cdots (z;z)_{n_{2}-n_1}(z;z)_{n_1}(t^{-1}z;z)_{n_1}},\\
		\nu^{R'}_{R'^n}(s)&=\sum_{n_1,\dots,n_m\ge 0}  \frac{t^{mn-\sum_{i=1}^m n_i} z^{-mn^2+\sum_{i=1}^m n_i^2} (z;z)_n (-t^{-1}z;z)_n}{(z;z)_{n-n_m}(z;z)_{n_{m}-n_{m-1}}\cdots (z;z)_{n_{2}-n_1}(z;z)_{n_1}(-t^{-1}z;z)_{n_1}}.
	\end{align*}
\end{proposition}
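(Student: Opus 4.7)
My plan is to derive each of the four formulas from a compact expression for $\nu^R_M(s)$ already available in the paper and in~\cite{huangjiang2023torsionfree}: a single sum over one partition $\lambda$ for the $\tl R^n$-cases~(1)--(2), and a double sum over pairs $(\lambda,\mu)$ for the $R^n$-cases~(3)--(4). In each case I would substitute the explicit expressions~\eqref{eq:g-formula} and~\eqref{eq:aut}, perform the change of variables $n_i := n - \lambda_i'$ (together with $s_i := n - \mu_i'$ when needed) with the convention $n_{m+1} := n$, and telescope the resulting product of Pochhammer symbols. Concretely, for formula~(2), Proposition~\ref{prop:rtilde} gives the starting point
\begin{equation*}
\nu^{R'}_{\tl{R'}^n}(s) = \sum_\lambda g^{(m^n)}_\lambda(q^2)\,\frac{a_\lambda(q^2)}{a_\lambda(q)}\,t^{|\lambda|};
\end{equation*}
the $q$-power $2\sum_i(n-n_i)n_i + \sum_i(n-n_i)^2$ collapses to $mn^2 - \sum_i n_i^2$, and the factor $\prod_i\qbinom{n_{i+1}}{n_i}_{q^{-2}}\cdot\prod_i(-q^{-1};q^{-1})_{n_{i+1}-n_i}$ (using $(q^{-2};q^{-2})_k = (q^{-1};q^{-1})_k(-q^{-1};q^{-1})_k$) telescopes into the claimed shape, producing the extra factor $(z;z)_n(-z;z)_n/[(z;z)_{n_1}(-z;z)_{n_1}]$. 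Formula~(1) follows by the identical manipulation applied to the split-case analog of Proposition~\ref{prop:rtilde} in~\cite{huangjiang2023torsionfree}; the two-branched structure of $\tl R$ there replaces $(-q^{-1};q^{-1})_k$ by $(q^{-1};q^{-1})_k$ in the telescoping product, yielding the $(z;z)_n^2$ extra factor.

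For formulas~(3) and~(4), I would start from the first-explicit double-sum formula (Theorem~\ref{thm:first-explicit} for inert, or \cite[Theorem~9.11]{huangjiang2023torsionfree} for split). The substitution $r_i := n - \lambda_i'$, $s_i := n - \mu_i'$ from the proof of Corollary~\ref{coro:zeta-new} rewrites each double sum as (a specialization of) the $2m$-fold multisum $\cX^{(m)}_n(a,t,z)$ at $a = \pm 1$ (with $a=-1$ in the inert case and $a=1$ in the split case, and with appropriate sign adjustments in $t$). The $a$-independence Theorem~\ref{th:X-a-indep} then collapses each $2m$-fold sum to the single-sum expression~\eqref{eq:X-nice-exp}, and the surviving Pochhammer factor $(\pm t^{-1}z;z)_n/(\pm t^{-1}z;z)_{n_1}$---originating from the $(t^2;q^2)_{\mu_1'}$ coefficient in the first-explicit formula together with the normalization factor $(t^2;q^2)_n^{-1}$---produces exactly the form in formulas~(3)--(4).

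The principal technical challenge is the collapse of the double sum into a single sum in cases~(3) and~(4): this is precisely what the $a$-independence theorem enables, and it accounts for the striking parallelism observed among the four formulas. The extra factor $(\pm t^{-1}z;z)_n/(\pm t^{-1}z;z)_{n_1}$ in the $R^n$ formulas is the $z \mapsto t^{-1}z$ image of the $(\pm z;z)_n/(\pm z;z)_{n_1}$ factor in the $\tl R^n$ formulas, in accordance with the Hilb-vs-Quot philosophy~\eqref{eq:tl-1} applied \emph{only to the extra Pochhammer factor}. Verifying that this partial substitution rule holds uniformly in $n$ is the key qualitative content of the proposition, and it drops out automatically from the $a$-independence mechanism once the correct specialization of $a$ and the associated normalization factor are tracked through the rewriting.
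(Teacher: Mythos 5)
For formulas (1) and (2) your proposal matches the paper: for (2) you cite Proposition~\ref{prop:rtilde} (as the paper does), and for (1) the split analog in~\cite{huangjiang2023torsionfree} (the paper cites \cite[Proposition~9.6]{huangjiang2023torsionfree}); in both cases one substitutes, telescopes, and arrives at the stated $m$-fold sum. This part is fine.

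For formulas (3) and (4) your route is genuinely different from the paper's, and it has a gap. The paper simply cites the established finitized Coh expressions \eqref{eq:coh-zeta-split} and \eqref{eq:main} (Bressoud sums for $\zetahat_{R,n}(s)$), divides out $\zetahat_{\tl R,n}(s)$ to obtain $\nuhat_{R,n}(s)=\nu^R_{R^n}(s+n)$, and then invokes the reflection principle \eqref{eq:geo-reflection} to pass from $\nu^R_{R^n}(s+n)$ to $\nu^R_{R^n}(s)$; the index reversal $n_i\mapsto n-n_{m+1-i}$ then produces exactly the stated shape. Your proposal instead goes back to the first-explicit $2m$-fold sums and collapses via Theorem~\ref{th:X-a-indep}, but you do not account for the fact that the substitution from Corollary~\ref{coro:zeta-new} lands on the $\cX$-form only \emph{after} the shift $s\mapsto s+n$ (equivalently, after passing to $\zetahat_{R,n}$). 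Without the shift, the $r_i=n-\lambda_i'$, $s_i=n-\mu_i'$ substitution introduces cross terms $z^{n\sum r_i-2n\sum s_i}$ in the exponents that prevent the multisum from being a clean specialization of $\cX_n^{(m)}$. Hence, if you do perform the shift (as in the Corollary), your $a$-independence collapse recovers nothing more than the Bressoud formula for $\zetahat_{R,n}(s)$, i.e.\ you re-prove \eqref{eq:coh-zeta-split} or Theorem~\ref{thm:main}; you still need the reflection principle \eqref{eq:geo-reflection} to convert $\zetahat$ (equivalently $\nu^R_{R^n}(s+n)$) into $\nu^R_{R^n}(s)$. That indispensable step is nowhere in your proposal, and it is precisely what makes the paper's proof short.

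Separately: if you carry out the computation carefully, you will find that the $t$-exponent in formulas (3) and (4) should read $t^{2(mn-\sum_{i=1}^m n_i)}$, not $t^{mn-\sum_{i=1}^m n_i}$. This is dictated both by the reflection-principle derivation (the prefactor $\Delta^{n^2-2ns}=q^{mn^2}t^{2mn}$ contributes $t^{2mn}$, while $\Br_n(t^{-1},z)$ contributes $t^{-2\sum n_i}$) and by the subsequent Corollary, which asserts $\nu^R_{R^n}(s)=\cyZh_{R,n}(t^2,t,q)$, whence the exponent is $(t^2)^{mn-\sum n_i}$. A direct check at $m=n=1$ for the inert order gives $\nu^{R'}_{R'}(s)=1+t+qt^2$, matching $t^{2(mn-\sum n_i)}$ but not $t^{mn-\sum n_i}$ (which would yield $2+tq$). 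So the statement as printed contains a typo in (3)--(4); any correct execution of your plan would surface this, but your write-up does not flag it.
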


\begin{proof}
	The relation for $\nu^R_{\tl R^n}(s)$ follows from \cite[Proposition~9.6]{huangjiang2023torsionfree}; for $\nu^{R'}_{\tl{R'}^n}(s)$, we recall Proposition~\ref{prop:rtilde}; for $\nu^R_{R^n}(s)$, we use \eqref{eq:coh-zeta-split} and the reflection principle \eqref{eq:geo-reflection}; and finally, for $\nu^{R'}_{R'^n}(s)$, we use Theorem~\ref{thm:main} again combining with the reflection principle.
\end{proof}

As such, we propose the following \emph{conceptual} interpolation that incorporates the three quadratic orders. Define a \emph{master polynomial} in $\Z[u,t,z^{-1}]$:
\begin{align}
	&\tl \cyZh_{m,n}(u,t,z)\notag\\
	&\quad:=\sum_{n_1,\dots,n_m\ge 0} \frac{u^{mn-\sum_{i=1}^m n_i}z^{-mn^2+\sum_{i=1}^m n_i^2}(z;z)_n(u^{-1}tz;z)_n}{(z;z)_{n-n_m}(z;z)_{n_m-n_{m-1}}\cdots (z;z)_{n_{2}-n_1}(z;z)_{n_1}(u^{-1}tz;z)_{n_1}}.
\end{align}

\begin{remark}
	This master polynomial parametrizes all our deformations for Andrews--Gordon and Bressoud sums investigated earlier by the relations:
	\begin{align*}
		\AG_n^{(2m+3)}(t,q) &= t^{2mn}q^{mn^2} \tl \cyZh_{m,n}(t^{-2},0,q),\\
		\dAG_n^{(2m+3)}(t,q) &= t^{mn}q^{mn^2} \tl \cyZh_{m,n}(t^{-1},0,q),\\
		\Br_n^{(2m+2)}(t,q) &= \frac{t^{2mn}q^{mn^2}}{(-tq;q)_n} \tl \cyZh_{m,n}(t^{-2},-t^{-1},q),\\
		\dBr_n^{(2m+2)}(t,q) &= \frac{t^{mn}q^{mn^2}}{(-q;q)_n} \tl \cyZh_{m,n}(t^{-1},-t^{-1},q),\\
		\ddBr_n^{(2m+2)}(t,q) &= \frac{(-t)^{mn}q^{mn^2}}{(-tq;q)_n} \tl \cyZh_{m,n}(-t^{-1},1,q).
	\end{align*}
\end{remark}

Now define
\begin{align}
	\cyZh_{R,n}(u,t,q):=\tl \cyZh_{m,n}(u,\epsilon_R\, t,q^{-1}),
\end{align}
where
\begin{align*}
	\epsilon_R := \begin{cases}
		0, & \text{if $R=R_{2,2m+1}$},\\
		1, & \text{if $R=R_{2,2m}$},\\
		-1, & \text{if $R=R_{2,2m}'$}.
	\end{cases}
\end{align*}
With this, we may unify the normalized Quot zeta functions $\nu^R_{\tl R^n}(s)$ and $\nu^R_{R^n}(s)$ for all three quadratic orders.

\begin{corollary}
	For $R=R_{2,2m+1}$, $R_{2,2m}$, and $R_{2,2m}'$, we have
	\begin{align}
		\nu^R_{\tl R^n}(s)&=\cyZh_{R,n}(t,t,q),\label{eq:interpolate-1}\\
		\nu^R_{R^n}(s)&=\cyZh_{R,n}(t^2,t,q).\label{eq:interpolate-2}
	\end{align}
\end{corollary}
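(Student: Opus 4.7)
The plan is to verify both identities by direct term-by-term substitution, treating each of the three quadratic orders in parallel. For $R = R_{2,2m}$ and $R = R_{2,2m}'$, the preceding proposition already supplies explicit $m$-fold multisum formulas for $\nu^R_{\tl R^n}(s)$ and $\nu^R_{R^n}(s)$. For the ramified case $R = R_{2,2m+1}$, first I would derive the analogous formulas: since $\tl R = \Fq[[T]]$ is a single-branch DVR, the Solomon formula \eqref{eq:solomon-formula} yields $\zeta^{\tl R}_{\tl R^n}(s+n) = (tq^{-1};q^{-1})_n^{-1}$; combining this with \eqref{eq:coh-zeta-ramified} and the definitional equality $\zetahat_{R,n}(s) = \zeta^{\tl R}_{\tl R^n}(s+n)\nu^R_{R^n}(s+n)$ gives, after the shift $t \mapsto tq^n$ induced by passing from the variable of $\zetahat$ to the variable of $\nu^R_{R^n}$, the expression $\nu^{R_{2,2m+1}}_{R^n}(s) = \AG_n^{(2m+3)}(tq^n, q^{-1})$. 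Expanding this via \eqref{eq:AG-finite-t} and reindexing $n_i \mapsto n - n_{m+1-i}$ puts it into the same shape as the split and inert formulas. The companion identity for $\nu^{R_{2,2m+1}}_{\tl R^n}(s)$ then follows from relation \eqref{eq:tl-n-2m+1}.

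With all six multisum expressions in hand, it remains to recognize each as a specialization of the master polynomial. In $\tl \cyZh_{m,n}(u, \epsilon_R t, z)$, the variable $u$ appears only through the prefactor $u^{mn - \sum_i n_i}$ and through the Pochhammer argument $u^{-1}\epsilon_R tz$ inside $(u^{-1}\epsilon_R tz; z)_n$ and $(u^{-1}\epsilon_R tz; z)_{n_1}$. Setting $u = t$ collapses this argument to $\epsilon_R z$, so the Pochhammer factor becomes $1$ (ramified), $(z;z)_\bullet$ (split), or $(-z;z)_\bullet$ (inert), with prefactor $t^{mn - \sum_i n_i}$. Setting $u = t^2$ collapses the argument to $\epsilon_R t^{-1} z$, giving $1$, $(t^{-1}z; z)_\bullet$, or $(-t^{-1}z; z)_\bullet$ respectively, with prefactor $t^{2(mn - \sum_i n_i)}$. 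In each of the six combinations of $\epsilon_R \in \{0, 1, -1\}$ and $u \in \{t, t^2\}$, the specialized master polynomial agrees, summand by summand, with the multisum formula from the previous paragraph.

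The principal obstacle is careful bookkeeping: one must correctly track the variable shift $t \mapsto tq^n$ between $\zetahat_{R,n}(s)$ and $\nu^R_{R^n}(s+n)$ in the ramified derivation, make sure the reindexing $n_i \mapsto n - n_{m+1-i}$ produces exactly the shape of the split and inert formulas (in particular, that the exponent of $z$ converts correctly to $-mn^2 + \sum_i n_i^2$), and translate between the master polynomial's $(u, t)$-parametrization and the single-$t$ form of the Andrews--Gordon and Bressoud sums. Once this correspondence is established, no further $q$-series identity is required; the corollary reduces to inspection of \eqref{eq:interpolate-1} and \eqref{eq:interpolate-2}.
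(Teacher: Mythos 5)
Your plan is correct and is essentially the argument the paper leaves implicit: for the split and inert orders one matches the multisum formulas of the preceding Proposition with the $u=t$ and $u=t^2$ specializations of $\tl\cyZh_{m,n}$, and for the ramified order one derives $\nu^{R_{2,2m+1}}_{R^n}(s)=\AG_n^{(2m+3)}(tq^n,q^{-1})$ from \eqref{eq:coh-zeta-ramified}, the Solomon formula, and the shift $s\mapsto s-n$, then reindexes $n_i\mapsto n-n_{m+1-i}$ and invokes \eqref{eq:tl-n-2m+1} for the companion identity.  All of your intermediate claims check out: the Solomon evaluation $\zeta^{\tl R}_{\tl R^n}(s+n)=(tq^{-1};q^{-1})_n^{-1}$, the reindexing converting the exponent of $z$ to $-mn^2+\sum n_i^2$, and the reading-off of the Pochhammer factors $1$, $(z;z)_\bullet$, $(-z;z)_\bullet$ (at $u=t$) and $1$, $(t^{-1}z;z)_\bullet$, $(-t^{-1}z;z)_\bullet$ (at $u=t^2$).

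One caveat you should be aware of: your correctly computed prefactor at $u=t^2$, namely $u^{mn-\sum n_i}=t^{2(mn-\sum n_i)}$, does \emph{not} match the Proposition as printed, which writes $t^{mn-\sum n_i}$ in all four formulas.  The Proposition's own proof (via \eqref{eq:coh-zeta-split}, \eqref{eq:main}, and the reflection principle \eqref{eq:geo-reflection}) produces $t^{2(mn-\sum n_i)}$ in the last two lines, and a hand check at $m=n=1$ confirms this: one gets $\nu^{R_{2,2}}_{R_{2,2}}(s)=1-t+qt^2$ and $\nu^{R'_{2,2}}_{R'_{2,2}}(s)=1+t+qt^2$, agreeing with $\cyZh_{R,1}(t^2,t,q)=1-\epsilon_R t+qt^2$ but not with the printed Proposition formula, which would give $qt$ and $qt+2$ respectively.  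So the exponent in the Proposition's last two displays is a typographical slip; your derived prefactor is the correct one, and once that slip is corrected your ``summand-by-summand'' comparison goes through exactly as you describe.
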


\begin{remark}
	The $t$-independence of $\cyZh_{R_{2,2m+1},n}(u,t,q)$ directly explains the $t\mapsto t^2$ rule in \eqref{eq:tl-n-2m+1}. 
\end{remark}

\begin{remark}
	In the rank $n=1$ case, we notice that
	\begin{align*}
		\cyZh_{R,1}(u,t,q)=1+\big(1-\epsilon_R\,  t(uq)^{-1}\big)\sum_{r=1}^m (uq)^r.
	\end{align*}
	For all three quadratic orders, it lives in $\Z[uq,t]$, thereby recovering the $q\mapsto tq$ rule in \eqref{eq:tl-1}.
\end{remark}

\begin{remark}
	The $\epsilon_R$ dictionary is \emph{not} an unexpected feature, because it echoes the Legendre symbol and appears in the rank-$1$ story \cite{saikia88}.
\end{remark}

\subsection{Reflection principle}

To demonstrate why our master polynomial may encode some yet-to-be-determined geometric meaning, we establish the following reflective functional equation, resonating with the discussion in Section~\ref{sec:t-deform}.

\begin{theorem}\label{thm:master-reflection}
	The master polynomial satisfies
	\begin{align}\label{eq:master-reflection}
		\tl \cyZh_{m,n}(u,t,z)=u^{mn}z^{-mn^2}\tl\cyZh_{m,n}(u^{-1}z^{2n},u^{-1}tz^n,z).
	\end{align}
	In particular, for $R=R_{2,2m+1}$, $R_{2,2m}$, and $R_{2,2m}'$, we have
	\begin{align}\label{eq:fine-reflection}
		\cyZh_{R,n}(u,t,q)=u^{\delta_R n}q^{\delta_R n^2}\cyZh_{R,n}(u^{-1}z^{2n},u^{-1}tz^n,q),
	\end{align}
	where $\delta_R=\dim_{\Fq} \tl R/R$.
\end{theorem}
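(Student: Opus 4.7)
The deduction of \eqref{eq:fine-reflection} from \eqref{eq:master-reflection} is routine: specializing $z = q^{-1}$, replacing $t$ by $\epsilon_R t$, and invoking $\cyZh_{R,n}(u,t,q)=\tl\cyZh_{m,n}(u,\epsilon_R t,q^{-1})$ together with $\delta_R = m$ for each of the three quadratic orders (easily checked from the definitions in Section~\ref{sec:prelim}), the identity \eqref{eq:master-reflection} immediately yields \eqref{eq:fine-reflection}. So the crux is to prove \eqref{eq:master-reflection}.

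The plan is first to apply the involution $\sigma\colon (n_1,\ldots,n_m)\mapsto (n-n_m,\ldots,n-n_1)$ to the sum defining $\tl\cyZh_{m,n}(u^{-1}z^{2n},u^{-1}tz^n,z)$. Under $\sigma$, one has $\sum n_i\mapsto mn-\sum n_i$ and $\sum n_i^2\mapsto mn^2-2n\sum n_i+\sum n_i^2$, while the product $(z;z)_{n-n_m}\cdots(z;z)_{n_1}$ is permuted among itself and hence is invariant. Combined with the fact that $(u,t)\mapsto (u^{-1}z^{2n},u^{-1}tz^n)$ sends $u^{-1}tz$ to $tz^{1-n}$, a direct computation should reduce \eqref{eq:master-reflection} to the identity
\begin{equation*}
\sum_{\vec n}D(\vec n)\frac{(u^{-1}tz;z)_n}{(u^{-1}tz;z)_{n_1}}=\sum_{\vec n}D(\vec n)\frac{(tz^{1-n};z)_n}{(tz^{1-n};z)_{n-n_m}},
\end{equation*}
where $D(\vec n):=\frac{u^{mn-\sum n_i}z^{-mn^2+\sum n_i^2}(z;z)_n}{(z;z)_{n-n_m}\cdots(z;z)_{n_1}}$.

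Treating $t$ as an indeterminate, I next expand each Pochhammer ratio by the $q$-binomial theorem and match coefficients of $t^l$. This reduces the identity further to the family, indexed by $l\ge 0$,
\begin{equation*}
u^{-l}\sum_{\vec n}D(\vec n)\qbinom{n-n_1}{l}_z z^{ln_1}=\sum_{\vec n}D(\vec n)\qbinom{n_m}{l}_z z^{-ln_m}.
\end{equation*}
The $l=0$ case recovers the Andrews--Gordon reflection \eqref{eq:AG-reflection}, which follows at once from the involution $\sigma$. For $l\ge 1$, I will induct on $m$. The base case $m=1$ reduces to a single-variable identity which can be verified by expanding both sides into double sums via the $q$-binomial theorem and matching term-by-term. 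The inductive step exploits the recursion $\tl\cyZh_{m,n}(u,t,z)=\sum_{k=0}^n u^{m(n-k)}z^{-m(n^2-k^2)}\qbinom{n}{k}_z(u^{-1}tz^{k+1};z)_{n-k}\tl\cyZh_{m-1,k}(u,t,z)$ obtained by splitting off the outermost index $n_m=k$.

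The main obstacle will be the inductive step: the reflection is a rank-$n$ operation, whereas the recursion produces $\tl\cyZh_{m-1,k}$ at rank $k<n$, so the induction hypothesis cannot be applied verbatim after the substitution $(u,t)\mapsto (u^{-1}z^{2n},u^{-1}tz^n)$. Bridging this mismatch will likely require a supplementary $q$-hypergeometric transformation---for instance, an iteration of Heine's transformations \eqref{eq:Heine1}--\eqref{eq:Heine3} or the ${}_3\phi_2$ identity \eqref{eq:3phi2}---to rewrite $\tl\cyZh_{m-1,k}(u^{-1}z^{2n},u^{-1}tz^n,z)$ in terms of $\tl\cyZh_{m-1,k}(u,t,z)$ up to a tractable correction factor.
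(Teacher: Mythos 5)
Your reduction to the identity
\begin{equation*}
\sum_{\vec n}D(\vec n)\frac{(u^{-1}tz;z)_n}{(u^{-1}tz;z)_{n_1}}=\sum_{\vec n}D(\vec n)\frac{(tz^{1-n};z)_n}{(tz^{1-n};z)_{n-n_m}}
\end{equation*}
is correct, and so is the further reduction to the $t^l$-coefficient family: both sides are polynomials in $t$ (the Pochhammer ratios are $(u^{-1}tz^{n_1+1};z)_{n-n_1}$ and $(tz^{1-n_m};z)_{n_m}$, respectively), so matching coefficients is legitimate, and your displayed family follows from Rothe's $q$-binomial expansion. Your $m=1$ base case also checks out: reindexing $n_1\mapsto n_1+l$ on the right-hand side makes the two sides match term by term. (Your remark that the $l=0$ case ``recovers'' the Andrews--Gordon reflection is a bit off --- after applying $\sigma$, the two $l=0$ sums are \emph{literally} the same expression, so there is nothing to prove; the AG reflection was already absorbed into the involution.)

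The problem, as you yourself flag, is that the inductive step does not close. After applying your recursion to the right side of \eqref{eq:master-reflection}, you land on $\tl\cyZh_{m-1,k}\bigl(u^{-1}z^{2n},u^{-1}tz^n,z\bigr)$, whereas the rank-$k$ induction hypothesis would give you $\tl\cyZh_{m-1,k}\bigl(u^{-1}z^{2k},u^{-1}tz^k,z\bigr)$. There is no evident scaling or Heine-type transformation that converts one into the other with a tractable correction factor, because the master polynomial does not have any simple behavior under $(u,t)\mapsto(u\alpha^2,t\alpha)$: the Pochhammer $(u^{-1}tz;z)_n$ and the outer factor $(u^{-1}tz^{k+1};z)_{n-k}$ depend on $u^{-1}t$ only, but the monomial prefactors $u^{mn-\sum n_i}$ do not. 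So ``split off the outermost index, induct on $m$'' is the wrong decomposition here, and you have not supplied the missing bridge. As it stands, this is a genuine gap, not a detail to be filled in.

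The paper handles this differently. After the same involution $\sigma$, it does \emph{not} extract coefficients of $t$. Instead it rewrites $(tz^{1-n};z)_n/(tz^{1-n};z)_{n_1}$ as $(-t)^{n-n_1}z^{-\binom{n-n_1}{2}}(t^{-1};z)_{n-n_1}$ and then iterates a single two-variable transformation (Lemma 8.3 of \cite{shane2024multiple} with $a=0$, stated there as \eqref{eq:Z-1=Z-2-full}) to convert the sum layer by layer: first over $n_m$ with bounds $(L,M)=(n_{m-1},n)$, then over $n_{m-1}$ with $(L,M)=(n_{m-2},n_m)$, and so on. Note that the ``ceiling'' parameter $M$ shifts from $n$ to $n_m$ to $n_{m-1}$ at each step, so this is a sequential sum swap telescoping through the nest, not an induction on $m$ of the form you propose. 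If you want to salvage your coefficient-extraction route, you would likely need to find the analogue of that lemma at the level of the $t^l$-coefficient sums; absent that, the step you acknowledge as the main obstacle is still wide open.
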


\begin{proof}
	Using the relation
	\begin{align*}
		\frac{(tz^{1-n};z)_n}{(tz^{1-n};z)_{n_1}} = (-t)^{n-n_1} z^{-\binom{n-n_1}{2}} (t^{-1};z)_{n-n_1},
	\end{align*}
	and then making the change of indices $n_i\mapsto n-n_{m-i+1}$ for each $i$ with $1\le i\le m$, we have
	\begin{align*}
		&\cyZh_{m,n}(u^{-1}z^{2n},u^{-1}tz^n,z)\\
		&\quad = \sum_{n_1,\dots,n_m\ge 0} \frac{(-t)^{n_m} u^{-\sum_{i=1}^m n_i}z^{-\binom{n_m}{2}+\sum_{i=1}^m n_i^2}(z;z)_n(t^{-1};z)_{n_m}}{(z;z)_{n-n_m}(z;z)_{n_m-n_{m-1}}\cdots (z;z)_{n_{2}-n_1}(z;z)_{n_1}}.
	\end{align*}
	Hence, \eqref{eq:master-reflection} is equivalent to
	\begin{align}\label{eq:master-reflection-new}
		&\sum_{n_1,\dots,n_m\ge 0} \frac{(-t)^{n_m} u^{-\sum_{i=1}^m n_i}z^{-\binom{n_m}{2}+\sum_{i=1}^m n_i^2}(t^{-1};z)_{n_m}}{(z;z)_{n-n_m}(z;z)_{n_m-n_{m-1}}\cdots (z;z)_{n_{2}-n_1}(z;z)_{n_1}}\notag\\
		&\quad = \sum_{n_1,\dots,n_m\ge 0} \frac{u^{-\sum_{i=1}^m n_i}z^{\sum_{i=1}^m n_i^2}(u^{-1}tz;z)_n}{(z;z)_{n-n_m}(z;z)_{n_m-n_{m-1}}\cdots (z;z)_{n_{2}-n_1}(z;z)_{n_1}(u^{-1}tz;z)_{n_1}}.
	\end{align}
	For the left-hand side of \eqref{eq:master-reflection-new}, we single out the sum over $n_m$,
	\begin{align*}
		\LHS\eqref{eq:master-reflection-new} &= \sum_{n_1,\dots,n_{m-1}\ge 0} \frac{u^{-\sum_{i=1}^{m-1} n_i}z^{\sum_{i=1}^{m-1} n_i^2}}{(z;z)_{n_{m-1}-n_{m-2}}\cdots (z;z)_{n_{2}-n_1}(z;z)_{n_1}}\\
		&\quad\times \sum_{n_m\ge n_{m-1}} \frac{(-u^{-1}tz)^{n_m} z^{\binom{n_m}{2}} (t^{-1};z)_{n_m}}{(z;z)_{n-n_m} (z;z)_{n_m-n_{m-1}}}.
	\end{align*}
	Now we recall \cite[Lemma~8.3 with $a=0$]{shane2024multiple}:
	\begin{align}\label{eq:Z-1=Z-2-full}
		\sum_{n\ge L} \frac{(-tb^{-1}q)^{n} q^{\binom{n}{2}} (b)_{n}}{(q)_{M-n} (q)_{n-L}} =(-b^{-1})^L q^{-\binom{L}{2}}(b)_L(b^{-1}tq)_M \sum_{n\ge L} \frac{t^{n}q^{n^2}}{(q)_{M-n}(q)_{n-L} (b^{-1}tq)_{n}},
	\end{align}
	which holds for nonnegative integers $L$ and $M$. Applying \eqref{eq:Z-1=Z-2-full} with $(b,t,q)\mapsto (t^{-1},u^{-1},z)$ and $(L,M)= (n_{m-1},n)$, we have
	\begin{align*}
		&\LHS\eqref{eq:master-reflection-new} \\
		&\quad= \sum_{n_m\ge 0} \frac{u^{-n_m} z^{n_m^2} (u^{-1}tz;z)_n}{(z;z)_{n_m} (u^{-1}tz;z)_{n_m}} \sum_{n_1,\dots,n_{m-2}\ge 0} \frac{u^{-\sum_{i=1}^{m-2} n_i}z^{\sum_{i=1}^{m-2} n_i^2}}{(z;z)_{n_{m-2}-n_{m-3}}\cdots (z;z)_{n_{2}-n_1}(z;z)_{n_1}}\\
		&\quad\quad \times \sum_{n_{m-1}\ge n_{m-2}} \frac{(-u^{-1}tz)^{n_{m-1}} z^{\binom{n_{m-1}}{2}} (t^{-1};z)_{n_{m-1}}}{(z;z)_{n_m-n_{m-1}} (z;z)_{n_{m-1}-n_{m-2}}}.
	\end{align*}
	For the sum over $n_{m-1}$, we may again apply \eqref{eq:Z-1=Z-2-full} with $(b,t,q)\mapsto (t^{-1},u^{-1},z)$ and $(L,M)= (n_{m-2},n_m)$. Iterating this process, we eventually arrive at the right-hand side of \eqref{eq:master-reflection-new}, thereby establishing \eqref{eq:master-reflection}.
\end{proof}

We are yet to find the geometric meaning of $\cyZh_{R,n}(u,t,q)$, but the \emph{uniform} relations \eqref{eq:interpolate-1}, \eqref{eq:interpolate-2}, and \eqref{eq:fine-reflection} suggest that there might be an analogous trivariate polynomial $\cyZh_{R,n}(u,t,q)$ defined for other planar curve germs $R$. Once such a generic $\cyZh_{R,n}(u,t,q)$ is constructed, if one further shows that the rank-$1$ case $\cyZh_{R,1}(u,t,q)$ lives in $\Z[uq,t]$, then this would lead us to a proof of the Hilb-vs-Quot conjecture.

\subsection{Cyclic sieving}
In \cite{huangjiang2023torsionfree}, a curious ``\emph{cyclic sieving}'' phenomenon is observed for $\nu^R_{R^n}(s)$ and $\nu^R_{\tl R^n}(s)$, concerning the evaluation for $q$ at roots of unity. Namely, if $r$ divides $n$ and $\zeta_r$ is a primitive $r$-th root of unity, then
\begin{equation}\label{eq:cyclic-sieving}
	\nu^R_{E^n}(s)|_{q\mapsto \zeta_r}\overset{?}{=}\parens*{\nu^R_{E}(rs)|_{q\mapsto 1}}^{n/r},
\end{equation}
for $E=R$ or $\tl R$.
The $q=1$ substitution (case $r=1$) is fully explained by geometry \cite[Proposition~8.13]{huangjiang2023torsionfree}, but the $r>1$ cases remain a mystery. This phenomenon gives another side to the question ---
\begin{quote}
	``\textit{Does the rank-$n$ Donaldson--Thomas theory factor through rank-$1$?}''
\end{quote}
While a positive answer has been witnessed in various settings on Calabi--Yau $3$-folds \cite{FMR2021,FT2023}, prior calculation in \cite{huangjiang2023torsionfree} shows that it is not the case at least for the motivic degree zero theory on the singular curve $R_{2,M}$. However, the cyclic sieving phenomenon says at least it is so at $n$-th roots of unity (primitive or not). Here we show that this phenomenon holds for the master polynomial $\tl \cyZh_{m,n}(u,t,z)$, which recovers the prediction \eqref{eq:cyclic-sieving} for all three types of quadratic orders $R=R_{2,2m+1}$, $R_{2,2m}$, and $R_{2,2m}'$.

\begin{theorem}\label{thm:Zh-at-root}
	For $r\mid n$, letting $\zeta_r$ be a primitive $r$-th root of unity, then
	\begin{align}\label{eq:Zh-at-root}
		\tl \cyZh_{m,n}(u,t,\zeta_r) = \left(\frac{1-t^r+u^{mr}t^r-u^{(m+1)r}}{1-u^r}\right)^{\frac{n}{r}}.
	\end{align}
	In particular,
	\begin{align}
		\tl \cyZh_{m,n}(u,t,\zeta_r) = \tl \cyZh_{m,1}(u^r,t^r,1)^{n/r}.
	\end{align}
\end{theorem}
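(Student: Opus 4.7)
My plan is to specialize $z$ to $\zeta_r$ directly in the multisum defining $\tl\cyZh_{m,n}(u,t,z)$, identify which terms survive the limit by a vanishing-order count, and then collapse the resulting multinomial sum.

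The elementary tool is the following local fact: for any $k\ge 0$, writing $k=\rho r+\sigma$ with $0\le\sigma<r$, the factor $(z;z)_k$ vanishes at $z=\zeta_r$ to order exactly $\rho=\lfloor k/r\rfloor$, with leading coefficient computable via $1-z^{br}\sim -br\zeta_r^{-1}(z-\zeta_r)$ together with $\prod_{b=1}^{r-1}(1-\zeta_r^b)=r$. Applying this, the numerator $(z;z)_n$ vanishes to order $\alpha:=n/r$, whereas the denominator chain $(z;z)_{n-n_m}(z;z)_{n_m-n_{m-1}}\cdots(z;z)_{n_1}$ vanishes to order $\sum_i\lfloor\mathrm{gap}_i/r\rfloor\le\alpha$, with equality iff every gap is divisible by $r$. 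Since $(u^{-1}tz;z)_n$ and $(u^{-1}tz;z)_{n_1}$ are nonvanishing at $z=\zeta_r$, only the tuples with $n_i=r\mu_i$ for $0\le\mu_1\le\cdots\le\mu_m\le\alpha$ contribute to the limit; everything else produces extra vanishing in the numerator and drops out.

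For such a surviving tuple, set $\delta_0:=\alpha-\mu_m$, $\delta_i:=\mu_{m-i+1}-\mu_{m-i}$ for $1\le i\le m-1$, $\delta_m:=\mu_1$, so $\sum_{i=0}^m\delta_i=\alpha$. A direct leading-coefficient computation gives
\[
\lim_{z\to\zeta_r}\frac{(z;z)_n}{(z;z)_{n-n_m}(z;z)_{n_m-n_{m-1}}\cdots(z;z)_{n_1}}=\binom{\alpha}{\delta_0,\delta_1,\ldots,\delta_m},
\]
because all factors of $(-1)$, $r^2$, and $\zeta_r^{-1}$ cancel against the corresponding ones from $(z;z)_n$ thanks to $\sum\delta_i=\alpha$. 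Next, $z^{-mn^2+\sum n_i^2}$ has exponent divisible by $r^2$ and equals $1$ at $z=\zeta_r$; using $\prod_{k=1}^r(1-x\zeta_r^k)=1-x^r$, the ratio $(u^{-1}tz;z)_n/(u^{-1}tz;z)_{n_1}$ evaluates to $(1-u^{-r}t^r)^{\alpha-\delta_m}$; and the prefactor $u^{mn-\sum n_i}$ becomes $u^{r\sum_{j=0}^m(m-j)\delta_j}$.

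Setting $v:=u^r$ and $s:=u^{-r}t^r$, the multinomial theorem applied to the sum over $(\delta_0,\ldots,\delta_m)$ with $\sum\delta_i=\alpha$ yields
\[
\tl\cyZh_{m,n}(u,t,\zeta_r)=(1-s)^\alpha\!\left(v^m+v^{m-1}+\cdots+v+\frac{1}{1-s}\right)^{\!\alpha}.
\]
Absorbing $(1-s)$ into the bracket, a brief algebraic simplification reduces the base to $\tfrac{1-t^r+u^{mr}t^r-u^{(m+1)r}}{1-u^r}$, yielding the claimed identity; the ``in particular'' statement is then automatic since this base is precisely $\tl\cyZh_{m,1}(u^r,t^r,1)$ by the earlier remark. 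The main obstacle is the delicate bookkeeping in the leading-coefficient step --- keeping track of the various powers of $r$, $\zeta_r$, and $(-1)$ that arise from each $(z;z)_{r\delta_i}$ --- but the constraint $\sum\delta_i=\alpha$ ensures each such factor cancels cleanly, leaving only the factorials that assemble the multinomial coefficient.
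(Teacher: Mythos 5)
Your proposal is correct and follows essentially the same route as the paper: isolate the surviving index tuples (those with all gaps divisible by $r$), reduce the ratio of Pochhammer symbols to an ordinary multinomial coefficient, evaluate $(u^{-1}tz;z)_n/(u^{-1}tz;z)_{n_1}$ via $\prod_{j=1}^r(1-x\zeta_r^j)=1-x^r$, and finish with the multinomial theorem. The only difference is presentational: the paper first rewrites the sum in terms of $q$-multinomial coefficients $\qbinom{n}{k_0,\dots,k_m}_z$ and quotes their known evaluation at primitive $r$-th roots of unity, whereas you re-derive that evaluation from scratch by tracking vanishing orders and leading coefficients of the individual $(z;z)_k$ factors.
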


\begin{proof}
	In terms of the \emph{$q$-multinomial coefficients}
	\begin{align*}
		\qbinom{N}{N_1,\ldots,N_s}_q := \frac{(q;q)_N}{(q;q)_{N_1}\cdots (q;q)_{N_s}},
	\end{align*}
	defined for nonnegative integers $N_1,\ldots,N_s$ and $N$ with $N=N_1+\cdots+N_s$, we may rewrite $\tl \cyZh_{m,n}(u,t,z)$ as
	\begin{align*}
		&\tl \cyZh_{m,n}(u,t,z)\\
		&\quad = \sum_{\substack{k_0,\ldots,k_m\ge 0\\k_0+\cdots+k_m=n}} \frac{u^{\sum_{i=1}^m i k_i} z^{-mn^2+\sum_{i=1}^m (n-k_i-\cdots-k_m)^2}(u^{-1}tz;z)_n}{(u^{-1}tz;z)_{k_0}}\qbinom{n}{k_0,\ldots,k_m}_z,
	\end{align*}
	where we have applied the substitution $k_i := n_{i+1}-n_i$ for each $i$ with $0\le i\le m$ under the convention that $n_{m+1}:=n$ and $n_0:=0$. An obvious fact about the $q$-multinomial coefficients is that assuming $r\mid N$, we always have, for $\zeta_r$ a primitive $r$-th root of unity,
	\begin{align*}
		\qbinom{N}{N_1,\ldots,N_s}_{\zeta_r} = \begin{cases}
			\binom{N/r}{N_1/r,\ldots,N_s/r}, & \text{if $r\mid N_i$ for every $i$},\\[6pt]
			0, & \text{otherwise},
		\end{cases}
	\end{align*}
	where
	\begin{align*}
		\binom{N}{N_1,\ldots,N_s} := \frac{N!}{N_1!\cdots N_s!}
	\end{align*}
	are the usual \emph{multinomial coefficients} defined again for nonnegative integers $N_1,\ldots,N_s$ and $N$ with $N=N_1+\cdots+N_s$. Now,
	\begin{align*}
		\tl \cyZh_{m,n}(u,t,\zeta_r) = \sum_{\substack{k'_0,\ldots,k'_m\ge 0\\k'_0+\cdots+k'_m=\frac{n}{r}}} (u^r)^{\sum_{i=1}^m i k'_i} (u^{-1}t\zeta_r;\zeta_r)_r^{\frac{n}{r}-k'_0} \binom{\frac{n}{r}}{k'_0,\ldots,k'_m}.
	\end{align*}
	Noting that
	\begin{align*}
		(u^{-1}t\zeta_r;\zeta_r)_r = 1-u^{-r}t^r,
	\end{align*}
	we further have
	\begin{align*}
		\tl \cyZh_{m,n}(u,t,\zeta_r) &= (1-u^{-r}t^r)^{\frac{n}{r}} \sum_{\substack{k'_0,\ldots,k'_m\ge 0\\k'_0+\cdots+k'_m=\frac{n}{r}}} \binom{\frac{n}{r}}{k'_0,\ldots,k'_m} \frac{(u^r)^{\sum_{i=1}^m i k'_i}}{(1-u^{-r}t^r)^{k'_0}}\\
		&= (1-u^{-r}t^r)^{\frac{n}{r}} \left(\frac{1}{1-u^{-r}t^r}+u^r+u^{2r}+\cdots+u^{mr}\right)^{\frac{n}{r}},
	\end{align*}
	where we have used the multinomial theorem for the sum. The claimed relation \eqref{eq:Zh-at-root} therefore follows.
\end{proof}

We should point out that cyclic sieving is not just about good evaluations at $n$-th roots of unity; instead, a good evaluation is just one manifestation of the full combinatorics of the cyclic sieving phenomenon, which involves a finite set $X$ with an action by the cyclic group $\Z/n\Z$, as first explored in \cite{reinerstantonwhite}.

The observation in Theorem~\ref{thm:Zh-at-root}, therefore, strongly suggests that a \emph{combinatorial} model --- such as a weighted count on a finite set --- might exist for $\tl \cyZh_{m,n}(u,t,z)$. The search for such a model is often complementary to the search for an underlying \emph{geometric} model (e.g., a moduli space), which has been proposed in the discussion after Theorem~\ref{thm:master-reflection}. Frequently, the combinatorial model is tightly connected with the set of torus-fixed points of the geometric one. The classical relationship between binomial coefficients and $q$-binomial coefficients illustrates this symbiosis perfectly:
$$\binom{n}{k}=\left\lvert\binom{[n]}{k}\right\rvert\qquad\text{versus}\qquad \qbinom{n}{k}_q=\abs{\Gr(k,n)(\Fq)}.$$
Here, the geometric model is the Grassmannian variety $\Gr(k,n)$. Its point count over $\Fq$ yields the $q$-analog, while its set of torus-fixed points is in canonical bijection with the combinatorial model $\binom{[n]}{k}$, the set of $k$-element subsets of $[n]:=\set{1,\dots,n}$.

\subsection*{Acknowledgements}

Shane Chern was supported by the Austrian Science Fund (No.~10.55776/F1002). Yifeng Huang thanks Ruofan Jiang for helpful conversations.

\bibliographystyle{plain}

\end{document}